 \newcommand{\sv}[1]{}
 \newcommand{\lv}[1]{#1}
\newcommand{\appendixText}{}
\newcommand{\toappendix}[1]{\gappto{\appendixText}{{#1}}}
\crefname{Theorem}{Theorem}{Theorems}
\newcommand{\df}{\coloneqq}
\newcommand{\PF}{P\!F}
\DeclarePairedDelimiterXPP\Aver[1]{\mathbb{E}}{[}{]}{}{%

#1
}
\DeclareFontFamily{U}{rcjhbltx}{}
\DeclareFontShape{U}{rcjhbltx}{m}{n}{<->rcjhbltx}{}
\DeclareSymbolFont{hebrewletters}{U}{rcjhbltx}{m}{n}
\let\aleph\relax\let\beth\relax
\DeclareMathSymbol{\aleph}{\mathord}{hebrewletters}{39}
\DeclareMathSymbol{\beth}{\mathord}{hebrewletters}{98}
\DeclareMathOperator{\Emb}{Emb}  %
\DeclareMathOperator{\emb}{emb}
\newcommand\E{\mathbb{E}}
\newcommand\floor[1]{\lfloor #1\rfloor}
\newcommand{\Vup}{V^\uparrow}
\newcommand{\Next}{R_i}
\newtheorem{theorem}{Theorem}[section]
\newtheorem{lemma}[theorem]{Lemma}
\newtheorem{corollary}[theorem]{Corollary}
\newtheorem{Definition}[theorem]{Definition}
\newtheorem{Conjecture}[theorem]{Conjecture}
\newtheorem{observation}[theorem]{Observation}
\newtheorem{claim}[theorem]{Claim}
\newcommand{\cev}[1]{\reflectbox{\ensuremath{\vec{\reflectbox{\ensuremath{#1}}}}}}%
\def\Uforw{\vec{U}}%
\def\Uback{\cev{U}}%
\begin{document}

\title{Random Embeddings of Graphs:\\ The Expected Number of Faces in Most Graphs is Logarithmic\thanks{An extended abstract of this manuscript appeared at ACM-SIAM Symposium on Discrete Algorithms (SODA 2024)~\cite{ourSODA}.}}
\author[1]{Jesse Campion Loth}
\author[1]{Kevin Halasz}
\author[1,2]{Tomáš Masařík\thanks{T.M.~was supported by a postdoctoral fellowship at the Simon Fraser University through NSERC grants R611450 and R611368.}}
\author[1,3]{\\ Bojan Mohar\thanks{B.M.~was supported in part by the NSERC Discovery Grant R611450 (Canada), by the Research Project J1-8130 of ARRS (Slovenia), and by the ERC Synergy grant KARST (European Union, ERC, KARST, project number 101071836).}}
\author[4]{Robert Šámal\thanks{
R.S.~was partially supported by grant 19-21082S of the Czech Science Foundation.
This project has received funding from the European Research Council (ERC) under the European Union's Horizon 2020 research and innovation programme (grant agreement No 810115). This project has received funding from the European Union's Horizon 2020 research and innovation programme under the Marie Skłodowska-Curie grant agreement No 823748.}}
\affil[1]{Department of Mathematics, Simon Fraser University, Burnaby, BC, V5A 1S6, Canada\\
\texttt{\{jcampion, khalasz, mohar\}@sfu.ca}}
\affil[2]{Institute of Informatics, Faculty of Mathematics, Informatics and Mechanics, University of Warsaw, Warszawa, 02-097, Poland\\
\texttt{masarik@mimuw.edu.pl}}
\affil[3]{Faculty of Mathematics and Physics, University of Ljubljana, Ljubljana, Slovenia}%
\affil[4]{Computer Science Institute, Faculty of Mathematics and Physics, Charles University, Praha, 118 00, Czech Republic\\
\texttt{samal@iuuk.mff.cuni.cz}}
\date{}

\maketitle

\vspace{-3em}
\begin{abstract}
A random 2-cell embedding of a connected graph $G$ in some orientable
surface is obtained by choosing a random local rotation around each
vertex. Under this setup, the number of faces or the genus of the corresponding 2-cell embedding becomes a random variable. Random embeddings of two particular graph classes -- those of a bouquet of $n$ loops and those of $n$ parallel edges connecting two vertices -- have been extensively studied and are well-understood. However, little is known about more general graphs despite their important connections with central problems in mainstream mathematics and in theoretical physics (see [Lando \& Zvonkin, Graphs on surfaces and their applications, Springer 2004]). There are also tight connections with problems in computing (random generation, approximation algorithms). The results of this paper, in particular, explain why Monte Carlo methods (see, e.g.,  [Gross \& Tucker, Local maxima in graded graphs of imbeddings, Ann. NY Acad. Sci 1979] and [Gross \& Rieper, Local extrema in genus stratified graphs, JGT 1991]) cannot work for approximating the minimum genus of graphs. 

In his breakthrough work ([Stahl, Permutation-partition pairs, JCTB
1991] and a series of other papers), Stahl developed the foundation of
``random topological graph theory''. Most of his results have been
unsurpassed until today. In our work, we analyze the expected number of
faces of random embeddings (equivalently, the average genus)
of a graph $G$.
It was very recently shown [Campion Loth \& Mohar, Expected number of faces in a random embedding of any graph is at most linear, CPC 2023] that for any graph $G$, the expected number of faces is at most linear.
We show that the actual expected number of faces $F(G)$ is almost always much smaller.
In particular, we prove the following results:
\begin{itemize}
  \item[(1)]  $\frac{1}{2}\ln n - 2 < \E[F(K_n)] \le 3.65 \ln n +o(1)$. This substantially improves Stahl's $n+\ln n$ upper bound for this case.
  \item[(2)] For random graphs $G(n,p)$ ($p=p(n)$), we have
    $\E[F(G(n,p))] \le \ln^2 n+\frac{1}{p}$.
\item[(3)] For random models $B(n,\Delta)$ containing only graphs, whose
maximum degree is at most $\Delta$, we obtain stronger bounds by showing that the expected number of faces is $\Theta(\log n)$.
\end{itemize}

\end{abstract}

\section{Introduction}

\subsection{Random embeddings of graphs in surfaces}
\label{sec:REintro}

Every 2-cell embedding of a graph $G$ in an (orientable) surface can be described combinatorially up to homeomorphic equivalence by using a \emph{rotation system}. This is a set of cyclic permutations $\{R_v \mid v\in V(G)\}$, where $R_v$ describes the clockwise cyclic order of edges incident with $v$ in an embedding of $G$ in an oriented surface. We refer to \cite{MT01} for further details. In this way, a connected graph $G$, whose vertices have degrees $d(v)$ ($v\in V(G)$), admits precisely
$\prod_{v \in V(G)} (d(v)-1)!$
nonequivalent 2-cell embeddings. 

Graph embeddings are of interest not only in topological graph theory but also within several areas of pure mathematics, physics and computing.  They are a fundamental concept in combinatorics (products of permutations, Hopf algebra, chord diagrams), algebraic number theory (algebraic curves, Galois theory, Grothendiek's ``dessins d'enfants'', moduli spaces of curves and surfaces), knot theory (Vassiliev knot invariants) and theoretical physics (quantum field theory, string theory, Feynmann diagrams, Korteweg and de Vries equation), we refer to \cite{LandoZvonkin} for details.  Every embedding of a graph can be described by a combinatorial map.  Random maps with a given number of vertices have been the subject of much recent study.  They have links with representation theory (conjugacy class products \cite{ChPi16,pippenger2006topological}) and probability theory (the Brownian map, see \cite{le2013uniqueness} and the references therein).  They also have applications in theoretical physics, via quantum gravity and matrix integrals, see \cite{gwynne2020random, zvonkin1997matrix} for introductions to these fields.  We will study the random maps obtained by randomly embedding a fixed graph or random graph.  Despite these being natural models in random graph theory and probability theory, they have received less attention.

Existing work on random embeddings of graphs in surfaces is mostly concentrated on the notion of the \emph{random genus} of a graph. By considering the uniform probability distribution on the set $\Emb(G)$ of all (equivalence classes of) 2-cell embeddings of a graph in (orientable) closed surfaces, we can speak of a random embedding and ask what is the expected value of its genus. The initial hope of using Monte Carlo methods on the configuration space of all 2-cell embeddings to compute the minimum genus of graphs \cite{GrRi91,GrTu79} quickly vanished as empirical simulations showed that, in many interesting cases, the average genus is very close to the maximum possible genus in $\Emb(G)$. The work of Gross and Rieper \cite{GrRi91} also showed that there can be arbitrarily deep local minima for the genus that are not globally minimum.
That result rules out traditional local-search algorithms. However it does not exclude search methods that have more significant random component, like the popular simulated annealing heuristic~\cite{simAnn}. Our results show that for almost all graphs, starting with a random embedding we would be very far from a minimum with extremely high probability. Therefore, any heuristic with strong randomness will with high probability lead toward an embedding with only a few faces (and so of large genus). Hence, our work gives strong theoretical evidence that such methods are very unlikely to be successful. Of course, if we restrict inputs to a particular graph class such algorithms may still work.
We conclude this paragraph with phrasing one of the main outcomes of our work; This paper provides a formal evidence that the Monte Carlo approach cannot work for approximating the minimum genus of graphs.

Unlike most previous works, we will not discuss the (average) genus but instead the (average) number of faces in random embeddings. Although the two variables are related linearly through Euler's formula, it turns out that the study of the number of faces yields a more appreciative view of certain phenomena that occur in this area.

\subsection{State-of-the-art}

Random embeddings of two special families of graph are well understood. The first one is a bouquet of $n$ loops (also called a \emph{monopole}), which is the graph with a single vertex and $n$ loops incident with the vertex. This family was first considered in a celebrated paper by Harer and Zagier \cite{HZ86} using representation theory. Several combinatorial proofs appeared later \cite{Ch11,Gross89,Ja87,Jackson1994_integral,Za95,Zagier2004}. By duality, the maps of the monopole with $n$ loops correspond to unicellular maps \cite{Ch11} with $n$ edges. The second well-studied case is the \emph{$n$-dipole}, a two-vertex graph with $n$ edges joining the two vertices; see \cite{Andrews1994,Chen2020,CR16,CMS12,Jackson1994_algebraic,Jackson1994_integral,KL93,RiThesis}. A more recent case gives an extension to the ``multipoles'' \cite{CHMMS22} using a result of Stanley \cite{St11}. Random embeddings in all these cases are in bijective correspondence with products of permutations in two conjugacy classes. A notable generalization of these cases appears in a paper by Chmutov and Pittel \cite{ChPi16}.   
Another well-studied case includes ``linear'' graph families, obtained from a fixed small graph $H$ by joining $n$ copies of $H$ in a path-like way, see \cite{GKMT18,St91JCTB} and references therein.

\begin{table}
  \small
\begin{subtable}{0.44\textwidth}
    \begin{tabular}{c||r|r|r|r}
        $n$ & 3&4&5&6\\\hline\hline
        $\emb(K_n) $ &$1$ & $2^4$& $6^5$&  $24^6$\\\hline
        $g=0$ &  1 & 2 & 0 & 0\\ 
        $g=1$ & 0 & 14 & 462 & 1,800 \\
        $g=2$ & 0 & 0 & 4,974 & 654,576\\
        $g=3$ &  0&0& 2,340 & 24,613,800 \\
        $g=4$ &  0& 0& 0 & 124,250,208 \\
        $g=5$ & 0 & 0 & 0 & 41,582,592\\\hline
        $\E(g)$ & 0 & 0.875 & 2.24 & 4.082
    \end{tabular}
    \caption{Genus distribution}
    \label{tab:genus distro}
\end{subtable}
~
\begin{subtable}{0.52\textwidth}
    \begin{tabular}{c||r|r|r|r|r}
      $n$ & 3&4&5&6&7\\\hline\hline
        $\emb(K_n) $ &$1$ & $2^4$& $6^5$&  $24^6$\\\hline
        $F=1$ &  0 & 0 & 2,340 & 41,582,592 \\ 
        $F=2$ & 1 & 14 & 0 & 0 \\
        $F=3$ & 0 & 0 & 4,974 & 124,250,208\\
        $F=4$ &  0 & 2 & 0 & 0 \\
        $F=5$ &  0 & 0 & 462 & 24,613,800  \\
        $F=6$ & 0 & 0 & 0 & 0 \\
        $F=7$ & 0 & 0 & 0 & 654,576\\
        $F=8$ & 0 & 0 & 0 & 0\\
        $F=9$ & 0 & 0 & 0 & 1,800 \\\hline
        $\E(F)$ & 2 & 2.25 & 2.517 & 2.836 & 3.1265\tablefootnote{This value was computed explicitly in~\cite[Table 3.1]{BCthesis7}.}
        \\\hline
        $\approx 2\ln{n}$ &2.2 & 2.77 & 3.22 & 3.58 & 3.89
    \end{tabular}
    \caption{Face distribution}
    \label{tab:face distro}
\end{subtable}
\caption{Data obtained by exhaustive computation concerning $K_n$ for $n \le 6$}
\label{tab:distros}
\end{table}

Here we discuss random graphs, including dense cases. One special case, which is of particular importance, is that of complete graphs. 
Looking at the small values of $n$, $K_3$ has only one embedding, which has two faces. It is easy to see that $K_4$ has two embeddings of genus 0 (with four faces) and all other embeddings have genus 1 and two faces. A brute force calculation using a computer gives the numbers for $K_5$ and $K_6$. 
They are collected in Table~\ref{tab:distros}.
The genus distribution of $K_7$ has been computed only recently \cite{BCHK16,BCthesis7} and there is no data for larger number of vertices.  The computed numbers for $K_n$ show that for $n \leq 7$ most embeddings have a small number of faces. The results of this paper show that, similarly to the small cases, most embeddings of any $K_n$ will have large genus and the average number of faces is not only subquadratic but it is actually proportional to $\ln n$.\footnote{We use $\ln n$ to denote the natural logarithm.} This is a somewhat surprising outcome, because the complete graph $K_n$ has many embeddings with $\Theta(n^2)$ faces. In fact, it was proved by Grannell and Knor \cite{GrKn10} (see also \cite{GrGr08} and \cite{GrGr09}) that for infinitely many values of $n$ there is a constant $c>0$ such that the number of embeddings with precisely $\tfrac{1}{3}n(n-1)$ faces is at least $n^{cn^2}$. All these embeddings are triangular (all faces are triangles) and thus of minimum possible genus. When we compare this result with the fact that 
\[
  |\Emb(K_n)| = ((n-2)!)^n = n^{\Theta(n^2)},
\]
we see that there is huge abundance of embeddings of $K_n$ with many more than logarithmically many faces.

Stahl \cite{Stahl1980} introduced the notion of \emph{permutation-partition pairs} with which he was able to describe partially fixed rotation systems.  Through the linearity of expectation these became a powerful tool to analyze what happens in average. In particular, he was able to prove that the expected number of faces in embeddings of complete graphs is much lower than quadratic.

\begin{theorem}[Stahl {\cite[Corollary 2.3]{Stahl1995}}]
\label{thm:Stahl_K_n}
    The expected number of faces in a random embedding of the complete graph $K_n$ is at most $n + \ln n$.
\end{theorem}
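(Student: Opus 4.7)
The plan is to enumerate potential faces of the random embedding and bound each one's probability. By linearity of expectation,
\[ \E[F(K_n)] = \sum_{W} P(W \text{ is a face}), \]
where $W$ ranges over equivalence classes of cyclic dart sequences (closed non-backtracking walks) in $K_n$. The probability that a walk of length $k$ visiting each vertex $v$ a total of $t_v$ times is a face equals $\prod_v (n-2-t_v)!/(n-2)!$, since this counts the uniform random cyclic permutations on the $n-1$ neighbors of $v$ that realize $t_v$ prescribed consistent transitions (extending the $t_v$ arrows to a single cycle in $(n-2-t_v)!$ ways, out of $(n-2)!$ total cyclic permutations).

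I would split the contribution into \emph{simple} walks (visiting each vertex at most once) and \emph{non-simple} ones (with vertex repeats). For simple walks, the probability reduces to $(n-2)^{-k}$, and there are $n(n-1)\cdots(n-k+1)/k$ cyclic sequences of length $k$. The ratio $n(n-1)\cdots(n-k+1)/(n-2)^k$ is at most $n(n-1)/(n-2)^2 \le 1 + O(1/n)$ for all $k \ge 3$ (the leading factor bounds the first two terms, and all remaining factors $(n-j)/(n-2) \le 1$). Hence the simple contribution sums to at most $\sum_{k=3}^n \frac{1}{k}(1 + O(1/n)) \le \ln n + O(1)$.

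For non-simple walks, I would stratify by the vertex-multiplicity pattern $\{t_v\}$ and by the underlying ``template'' (the multigraph obtained by collapsing vertices visited multiple times). At a vertex visited $t$ times, the probability carries the stronger suppression factor $[(n-2)(n-3)\cdots(n-1-t)]^{-1}$, which decreases faster than $(n-2)^{-t}$; moreover, the non-backtracking condition $v_{i+1} \neq v_{i-1}$ sharply restricts walks with many repeats. By carefully combining the walk-count per template with the probability suppression, I aim to show that the total non-simple contribution is bounded by $n$. Adding the two bounds yields $\E[F(K_n)] \le n + \ln n$.

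The main obstacle will be the non-simple case: the sum over templates is combinatorially rich, and verifying that the cumulative contribution is at most $n$ requires careful bookkeeping. I expect a workable approach is an induction on the number of vertex revisits, or equivalently a generating-function argument that organizes templates by this parameter, showing that each additional revisit contributes only a constant-factor correction.
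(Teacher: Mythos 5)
Your approach---computing $\E[F]$ via linearity of expectation over all potential facial walks, with per-vertex probability factors $\tfrac{(n-2-t_v)!}{(n-2)!}$---is genuinely different from the route attributed to Stahl in the paper. Stahl's argument (summarized in Section~2 and reformulated as Observation~\ref{o:at-most-one-face}) is a \emph{sequential} one: it builds the rotation system one dart-transition at a time, and at each step bounds the probability of closing a face by $1/(\text{number of remaining choices})$. Your framework is a \emph{direct first-moment} computation over faces. Interestingly, your treatment of the simple faces---counting $n^{\underline{k}}/k$ directed $k$-cycles each with probability $(n-2)^{-k}$---is exactly the calculation the paper performs in the \emph{lower-bound} proof (Theorem~\ref{thm:lb}, Section~\ref{sec:lb}), and your estimate that this portion sums to $\ln n + O(1)$ is correct. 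The per-vertex product formula is also essentially correct: forcing $t_v$ consistent (acyclic) successor-constraints inside a uniform cyclic permutation of $n-1$ darts has probability $(n-2-t_v)!/(n-2)!$, with the understanding that this is an upper bound when the constraints are not realizable and that the edge case $t_v = n-1$ must be handled separately.

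The difficulty is that your argument stops exactly where the theorem becomes nontrivial. Bounding the non-simple contribution by $n$ is the entire content of the theorem beyond the elementary $\ln n$, and you explicitly defer it: ``I aim to show,'' ``requires careful bookkeeping,'' ``I expect a workable approach.'' That is not a proof, it is a plan. The sum over non-simple templates is indeed delicate: one must show that the enumeration of closed non-backtracking walks with a prescribed vertex-multiplicity vector $(t_v)$, times the suppression factor $\prod_v \bigl[(n-2)(n-3)\cdots(n-1-t_v)\bigr]^{-1}$, is summable to at most $n$. Rough counting for low-excess templates (e.g.\ a single doubly-visited vertex contributes $O(1/n)$ per length, hence $O(\ln n/n)$ overall) suggests it is, and in fact the paper's main results (Theorems~\ref{thm:completeGraphsAllValues}, \ref{thm:logBound}) imply the non-simple contribution is actually $O(\ln n)$, so a correct completion of your program would prove something stronger than Stahl's bound. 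But as written the proposal has a genuine gap: the dominant and technically hard half of the argument is missing. To close it you would need, at minimum, a uniform bound on the number of templates with a given multiplicity profile together with a convergent majorizing series; an induction on the number of revisits is plausible but not something that can simply be asserted.
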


Computer simulations show that even the bound given in \cref{thm:Stahl_K_n} is too high. In fact, Mauk and Stahl conjectured the following.

\begin{Conjecture}[Mauk and Stahl {\cite[page~289]{Mauk1996}}]\label{con:Mauk}
    The expected number of faces in a random embedding of the complete graph $K_n$ is at most $2 \ln n+O(1)$.
\end{Conjecture}

For general graphs, a slightly weaker bound than that of \cref{thm:Stahl_K_n} was derived by Stahl using the same approach as in~\cite{Stahl1995}; it had appeared in~\cite{St91JCTB} a couple of years earlier. 

\begin{theorem}[Stahl {\cite[Theorem~1]{St91JCTB}}]
\label{thm:Stahl_any}
    The expected number of faces in a random embedding of any $n$-vertex graph is at most $n\ln n$.
\end{theorem}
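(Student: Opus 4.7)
The plan is to adapt the permutation-partition pair framework that yields \cref{thm:Stahl_K_n} for $K_n$, applied now to a general simple $n$-vertex graph $G$. The core idea is to randomize the local rotations one vertex at a time and show that each vertex contributes at most $\ln d(v) + O(1)$ to the expected face count; summing over $V(G)$ and applying Jensen's inequality then gives the $n\ln n$ bound.

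Concretely, I would fix an ordering $v_1, \ldots, v_n$ of the vertices and, for each $k \in \{0,1,\ldots,n\}$, consider the partial configuration in which local rotations at $v_1, \ldots, v_k$ have been chosen while the darts incident to $v_{k+1},\ldots,v_n$ are treated as ``free ends.'' In Stahl's permutation-partition pair formalism, such a partial configuration determines an associated orbit count $F_k$ that interpolates between a trivial base value $F_0$ (at most a small constant after normalization) and the true face count $F_n = F(G,\rho)$. The central lemma to prove is the one-vertex estimate: conditional on any fixed rotations at $v_1,\ldots,v_{k-1}$, choosing the rotation at $v_k$ uniformly at random among cyclic permutations of its $d(v_k)$ darts gives
\[
  \E[F_k - F_{k-1} \mid \rho_{v_1},\ldots,\rho_{v_{k-1}}] \;\le\; H_{d(v_k)} \;\le\; 1 + \ln d(v_k).
\]
The argument for the lemma mirrors Stahl's: generate the random cyclic rotation at $v_k$ by inserting the $d(v_k)$ darts one at a time into a uniformly chosen cyclic gap; at the $j$-th insertion, at most one of the $j-1$ gaps splits an existing partial face and hence creates a new orbit, while the rest keep the orbit count unchanged or decrease it. The expected per-step increase is thus at most $1/(j-1)$, and telescoping produces the harmonic bound.

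Iterating the lemma over $k=1,\ldots,n$ and taking the total expectation yields $\E[F(G)] \le F_0 + \sum_v H_{d(v)}$. Using $d(v) \le n-1$ for every $v$ in a simple graph and the concavity of $\ln$ via Jensen's inequality, $\sum_v \ln d(v) \le n \ln \bar d \le n \ln(n-1)$, after which careful bookkeeping of the lower-order terms produces the announced bound $\E[F(G)] \le n \ln n$.

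The main obstacle is the rigorous setup of the permutation-partition pair framework for partially-randomized rotation systems: one has to define $F_k$ so that free-end darts at unprocessed vertices contribute correctly to the orbit structure, verify that $F_n$ agrees with the actual face count, and in particular justify the per-insertion split probability of at most $1/(j-1)$ when the dart being inserted may have its other endpoint at either a processed or an unprocessed vertex. These bookkeeping issues are exactly where Stahl's machinery does its work; once they are in place, the general-graph version requires only replacing the common degree $n-1$ in the $K_n$ proof by the individual degree $d(v)$ throughout and summing.
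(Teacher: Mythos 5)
Your structural plan—process vertices one at a time, bound the per-vertex face contribution by a harmonic number via the ``at most one choice closes a face'' observation (\cref{o:at-most-one-face}), and sum—is indeed Stahl's approach, and the paper's \hyperref[def:rpA]{Random process A} is a refinement of exactly this idea. However, the per-vertex bound you state, $\E[F_k-F_{k-1}\mid\cdots]\le H_{d(v_k)}$, is too crude to yield the claimed inequality $\E[F(G)]\le n\ln n$, and ``careful bookkeeping of lower-order terms'' cannot rescue it. Summing your bound and applying Jensen gives
\[
  \sum_{v} H_{d(v)} \;\le\; n + \sum_v \ln d(v) \;\le\; n + n\ln\bar d \;\le\; n + n\ln(n-1),
\]
where $\bar d = 2m/n$ is the average degree. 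Since $1-\ln\tfrac{n}{n-1}>0$ for all $n\ge2$, this exceeds $n\ln n$ by $\Theta(n)$; the excess comes from the additive constant $\gamma\approx0.577$ hiding in $H_d$, and Jensen over the full degrees cannot make it go away.

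The missing idea is precisely the distinction you flag at the end but do not exploit: when $v_k$ is processed, only its \emph{downward} darts---those whose $L$-partner lies at an already-processed vertex---can lie on a face all of whose rotations are determined, so only they can close a face. Write $d_k^-$ for the number of such darts. Ordering the rotation at $v_k$ so that the downward darts are inserted first (so that they enjoy the largest denominators) gives, per vertex,
\[
  \sum_{j=1}^{d_k^-}\frac{1}{d_k - j} \;=\; H_{d_k-1} - H_{d_k-1-d_k^-} \;\le\; H_{d_k^-},
\]
the last step since the $d_k^-$ summands are each at most the corresponding term of $H_{d_k^-}$. Every edge is a downward dart for exactly one of its two endpoints (the later-processed one), so $\sum_k d_k^- = m \le \binom n2$. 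Jensen now yields
\[
  \E[F(G)] \;\le\; \sum_{k=1}^n H_{d_k^-} \;\le\; n\,H_{m/n} \;\le\; n\,H_{(n-1)/2},
\]
and since $H_{(n-1)/2}\approx\ln(n-1)-\ln2+\gamma<\ln n$ (because $\gamma<\ln2$), this is strictly below $n\ln n$. The crucial point is that the argument of the harmonic number must be $m/n$, not $2m/n$: that factor-of-two saving inside the logarithm is exactly what absorbs the Euler–Mascheroni constant and makes $n\ln n$ come out. Your write-up uses the full degree $d(v_k)$ throughout and therefore loses this factor, producing a bound that is genuinely weaker than the one claimed.
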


The $n\ln n$ bound of Stahl was improved only recently. 
Campion Loth, Halasz, Masařík, Mohar, and Šámal \cite{CHMMS22} conjectured that the bound should be linear, which was then proved in \cite{loth2022expected}. 

\begin{theorem}[Campion Loth and Mohar~{\cite[Theorem 3]{loth2022expected}}]
\label{thm:CampionLothMohar_any}
    The expected number of faces in a random embedding of any graph is at most $\frac{\pi^2}{6} n$.
\end{theorem}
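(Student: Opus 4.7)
One natural plan is to exploit linearity of expectation via the identity
\[
F \;=\; \sum_{\vec{e}\in D(G)} \frac{1}{\ell(\vec{e})},
\]
where $D(G)$ is the set of directed edges of $G$ and $\ell(\vec{e})$ is the length of the face containing $\vec{e}$. Taking expectations and grouping by the tail of $\vec{e}$, the theorem reduces to proving a per-vertex bound
\[
c_v \;\df\; \sum_{u\in N(v)} \E\!\left[\tfrac{1}{\ell((v,u))}\right] \;\le\; \zeta(2) \;=\; \tfrac{\pi^2}{6},
\]
which summed over $v\in V(G)$ gives $\E[F]\le\tfrac{\pi^2}{6}\, n$. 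The identity $\sum_v c_v = F$ holds deterministically because each face $f$ contributes $k_v(f)/\ell(f)$ to $c_v$, where $k_v(f)$ counts the visits of $f$ as tail to $v$, and these contributions sum to $1$ per face.

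To establish the vertex-local bound at $v$, I would use a Stahl-style revelation: fix the rotations at every vertex other than $v$ and reveal $R_v$ last. Conditional on the other rotations, every face walk that visits $v$ decomposes into \emph{excursions from $v$} -- maximal subwalks leaving $v$ and returning to $v$, using only the already-fixed rotations. These excursions induce a permutation $M$ on the $d_v$ edges at $v$, pairing each outgoing edge with the incoming edge that completes its excursion. Revealing $R_v$ as a uniformly random $d_v$-cycle $\sigma$, the faces through $v$ are in bijection with the cycles of $\sigma M$, and $c_v$ becomes the conditional expectation over $\sigma$ of a cycle-weighted statistic: a cycle of length $k$ that glues $k$ excursions of total length $\ell$ contributes exactly $k/\ell$ to $c_v$.

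The main obstacle is proving $\E_\sigma[c_v \mid M]\le \zeta(2)$ uniformly in $M$. A straightforward cycle-count bound only gives $\E[\text{number of cycles of }\sigma M]\le H_{d_v}$, in the spirit of Harer--Zagier for the monopole, which is too weak by a logarithmic factor. The improvement to $\zeta(2)$ must exploit the weighting $1/\ell$ rather than $1$: if one can show that the probability that $\sigma M$ has a cycle of length $k$ is $O(1/k)$, and combine this with the average excursion length lower bound inherited from simpleness of $G$ to obtain an additional factor of $1/k$ in expectation, then the two factors combine to $\sum_k 1/k^2=\pi^2/6$. Making this rigorous and uniform in $M$ -- especially when $M$ has many short cycles of its own -- is the technical heart of the argument, and I expect it to require a generating-function or bijective identity in the vein of the Chmutov--Pittel approach \cite{ChPi16}.
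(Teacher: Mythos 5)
You have identified a natural decomposition, and the identity $F=\sum_{\vec{e}} 1/\ell(\vec{e})$, together with the consequent $\E[F]=\sum_v c_v$, is correct. (The present paper does not prove this theorem; it is cited from~\cite{loth2022expected}, so there is no internal proof to compare against.) However, the reduction you propose fails already at its first step: the per-vertex bound $c_v\le \pi^2/6$ is false, whether you take it conditionally on the excursion permutation $M$ or average over the other rotations.

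A concrete counterexample is the friendship (windmill) graph: a hub $v$ joined to $m$ triangles, so $d_v=2m$ and every other vertex has degree $2$. The rotations at the degree-$2$ vertices are forced, so all randomness lives in $R_v=\sigma$; the excursion permutation $M$ is a \emph{fixed} fixed-point-free involution (pairing the two darts at $v$ in each triangle) and every excursion has length exactly~$3$. The faces are precisely the cycles of $\sigma M$, a cycle of size $k$ contributes $k/(3k)=1/3$ to $c_v$, and hence
\[
  c_v \;=\; \tfrac13\,\E\bigl[\#\text{cycles of }\sigma M\bigr]\;=\;\Theta(\ln m)\;\longrightarrow\;\infty,
\]
which is the Harer--Zagier monopole computation in disguise. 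The theorem nonetheless holds because each of the other $2m$ vertices contributes only $c_{u_i}\le 2/3$: the bound $\tfrac{\pi^2}{6}n$ must be amortized across vertices and cannot be established locally at $v$. Your auxiliary claim that $\Pr[\sigma M\text{ has a cycle of length }k]=O(1/k)$ is likewise false already for $M$ the identity, where $\sigma M$ is deterministically a single $d_v$-cycle. So the difficulty you flagged at the end is not a calculation awaiting a generating-function identity but a false lemma: a Stahl-style revelation that saves $R_v$ for last and then bounds the contribution at $v$ in isolation is exactly the wrong move for graphs with a dominant hub, and the missing ingredient must control the faces closed over the entire revelation sequence rather than one vertex at a time.
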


The bound of \cref{thm:CampionLothMohar_any} is essentially best possible as there are $n$-vertex graphs whose expected number of faces is $\frac{1}{3}n+1$, see \cite{loth2022expected}.

\subsection{Our results}\label{sub:ourRes}

The first main contribution of this paper is the proof of~\cref{con:Mauk} with a slightly worse multiplicative factor.

\begin{theorem}\label{thm:completeGraphsAllValues}
  Let $n\geq 1$ be an integer and let $F(n)$ be the random variable whose value is the number of faces in a random embedding of the complete graph $K_n$. 
  The expected value of $F(n)$ is at most $10\ln n+2$.
  For $n$ sufficiently large ($n\ge e^{e^{16}}$) the multiplicative constant can be improved to give a better bound:
  \[
    \E[F(n)]\le 3.65\ln n.
    \]
\end{theorem}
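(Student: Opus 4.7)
The plan is to prove the bound by induction on $n$, together with a decomposition that isolates the effect of a single vertex's rotation.  Sample a uniform random rotation $\rho$ of $K_n$ in two steps: first sample $\rho'=(\rho_{v_1},\dots,\rho_{v_{n-1}})$, and then independently sample $\rho_{v_n}$ uniformly among cyclic permutations of the $n-1$ edges at $v_n$.  For any fixed $\rho'$, the ``exterior'' face-tracing---starting from a dart $(v_n,v_j)$ and following the face-tracing permutation $\phi_\rho$ through $K_n$ until it re-enters $v_n$ at some dart $(v_k,v_n)$---defines a permutation $\sigma=\sigma(\rho')$ on the $n-1$ neighbors of $v_n$.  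The faces of $K_n$ then split cleanly into the faces touching $v_n$, which are in bijection with the cycles of $\rho_{v_n}\circ\sigma$, and the faces avoiding $v_n$, whose count $S=S(\rho')$ depends only on $\rho'$:
\[
F(K_n,\rho)\;=\;c\bigl(\rho_{v_n}\,\sigma(\rho')\bigr)\;+\;S(\rho').
\]

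The ``local'' term $c(\rho_{v_n}\sigma(\rho'))$ is precisely the problem of counting cycles of a uniform random full cycle composed with a fixed permutation---the natural generalization of the Harer--Zagier set-up for the monopole and the dipole.  For every fixed permutation $\sigma$ of $n-1$ elements, the expected number of cycles of $\rho_{v_n}\sigma$ grows at most logarithmically in $n$; this can be obtained via Stahl's permutation-partition machinery, or via a direct character computation, and yields a bound of the form $c_1\ln n+O(1)$ with a modest explicit constant $c_1$.  Taking expectation over $\rho'$, this term contributes at most $c_1\ln n+O(1)$ to $\E[F(K_n)]$.

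The harder contribution is $\E[S]$, which I would control using the inductive hypothesis $F(n-1)\le C\ln n$.  By induction, a typical embedding of $K_{n-1}$ (obtained from $\rho'$ by deleting the edges to $v_n$) has only $O(\ln n)$ faces, so the $(n-1)(n-2)$ corners of $K_{n-1}$ are concentrated in few faces of average length $\Theta(n^2/\ln n)$.  Each of the $n-1$ edges $v_nv_j$ is inserted at a uniformly random corner at $v_j$; a face of $K_{n-1}$ with $\ell$ corners contributes to $S$ only if no insertion lands in any of them, an event of probability at most $e^{-\ell/(n-2)}$.  Long faces (those with $\ell\gg n$) thus survive with negligible probability.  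Short faces, however, survive with non-negligible probability, so they must be shown to be rare by a separate first-moment argument enumerating short closed walks of $K_{n-1}$ capable of forming face boundaries.  Executed carefully, these two estimates should give $\E[S]=o(\ln n)$, and combining with the local term yields $F(n)\le(c_1+o(1))\ln n$, closing the induction.  A careful tracking of constants through the local term produces the asymptotic $3.65\ln n$ bound for $n\ge e^{e^{16}}$; for smaller $n$, Stahl's bound (\cref{thm:Stahl_K_n}) combined with an explicit verification of base cases secures the uniform $23\ln n$ bound.

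The main obstacle is bounding the short-face contribution to $\E[S]$.  Faces of length $O(1)$ survive insertion with constant probability and cannot be controlled by the inductive hypothesis alone, since the induction only bounds the \emph{total} face count, not its tail at short lengths.  Executing this first-moment calculation---handling vertex-repeat patterns in short closed walks and the dependencies among insertion positions at different vertices of the base $K_{n-1}$ embedding---is where the bulk of the technical work should lie, and is the step most likely to force the explicit constants ($23$ uniformly, $3.65$ asymptotically) rather than a cleaner $\ln n+O(1)$ bound matching \cref{con:Mauk}.
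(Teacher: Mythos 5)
Your decomposition $F(K_n,\rho)=c(\rho_{v_n}\sigma(\rho'))+S(\rho')$ is valid, and the uniform bound $\E_{\rho_{v_n}}[c(\rho_{v_n}\sigma)]\le\ln n+O(1)$ on the local term is correct (it follows from \cref{o:at-most-one-face} applied to $R_{v_n}$). This is a genuinely different organization from the paper's: the paper runs a vertex-by-vertex random process (adding $v_n,v_{n-1},\dots,v_1$ in order), charges to each step $k$ the faces whose minimum-index vertex is $v_k$, and bounds the contribution via $q(O_k,PF_k)$ and the split $S_1+S_2+S_3+S_4$; the inductive bound on $\E[F(m)]$ enters only through concentration estimates (Markov via \cref{lem:Ok}, the second moment via \cref{lem:power_of_expectation}), never through a direct $F(n)$-versus-$F(n-1)$ comparison.

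The fatal flaw is the claimed estimate $\E[S]=o(\ln n)$; in fact $\E[S]=\Theta(\ln n)$, so your induction cannot close as stated. Concretely, a face of the $K_{n-1}$ embedding through $\ell$ distinct vertices survives all $n-1$ insertions with probability $(1-\tfrac{1}{n-2})^\ell$, which for $\ell\le\lfloor\sqrt{2n}\rfloor$ is $1-O(n^{-1/2})$. Running the first-moment computation from the paper's proof of \cref{thm:lb} in $K_{n-1}$ gives $\E[F'_\ell]\ge\tfrac1\ell-\tfrac{\ell-1}{2(n-1)}$ for such faces, so summing over $3\le\ell\le\lfloor\sqrt{2n}\rfloor$ yields at least $\tfrac12\ln n-2$ short faces in expectation, essentially all of which survive. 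Hence $\E[S]\ge\tfrac12\ln n-O(1)$. Short faces are not ``rare'': they contribute $\Theta(\ln n)$ to the face count of $K_{n-1}$ and pass through the insertion of $v_n$ unscathed, so your recursion yields only $\E[F(n)]\le\ln n+O(1)+\E[F(n-1)]$, which diverges. To salvage this route you would need $\E[S]\le(C-1-\delta)\ln n$ for the target constant $C$, i.e.\ that inserting $v_n$ \emph{destroys} at least $\ln n-O(1)$ faces of $K_{n-1}$ in expectation; but that requires control on the face-\emph{length} distribution of $K_{n-1}$ (to show at most $(1+o(1))\ln n$ faces are too short to be killed), and the hypothesis $\E[F(n-1)]\le C\ln(n-1)$ carries no such information. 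You would have to strengthen the induction hypothesis substantially, at which point the argument loses its apparent simplicity.
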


We complement our upper bound with a lower bound showing that our result is tight up to the multiplicative factor.

\begin{restatable}{theorem}{loglb}
  \label[Theorem]{thm:lb}
  For all positive integers $n$, we have 
  \[
    \E[F(n)]> \tfrac{1}{2}\ln(n)-2.
    \]
\end{restatable}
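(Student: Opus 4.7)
The plan is to establish a cycle-counting lower bound. For each integer $k\ge 3$, let $F_k^\circ$ be the random number of faces of the embedding whose boundary is a simple $k$-cycle of $K_n$. These are pairwise distinct faces, so $F(n) \ge \sum_{k\ge 3} F_k^\circ$ pointwise, and it suffices to prove $\sum_{k\ge 3}\E[F_k^\circ] \ge \tfrac{1}{2}\ln n - 2$.

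To compute $\E[F_k^\circ]$ I apply linearity of expectation over oriented $k$-cycles of $K_n$. For a fixed oriented cycle $C=v_1 v_2 \cdots v_k$, the face-tracing rule says $C$ bounds a face in this orientation iff, at every $v_i$, the rotation $R_{v_i}$ sends the dart toward $v_{i-1}$ to the dart toward $v_{i+1}$. Each such local event has probability $1/(n-2)$, and since rotations at distinct vertices are independent, the probability is $(n-2)^{-k}$. Because the number of oriented $k$-cycles in $K_n$ is $n!/(k(n-k)!)$,
\[
  \E[F_k^\circ] \;=\; \frac{1}{k}\prod_{i=0}^{k-1}\frac{n-i}{n-2}.
\]
For the product, the three factors with $i\in\{0,1,2\}$ multiply to at least $1$, while for $i\ge 3$ I apply $\ln(1-x)\ge -x-x^2$ (valid on $[0,1/2]$) to $x=(i-2)/(n-2)$, obtaining
\[
  \prod_{i=0}^{k-1}\frac{n-i}{n-2} \;\ge\; \exp\!\Bigl(-\tfrac{k^2}{2(n-2)}\Bigr)\bigl(1-o(1)\bigr) \qquad \text{for } 3\le k\le \sqrt{n-2}.
\]

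Summing for $k=3,\dots,K:=\floor{\sqrt{n-2}}$ and bounding the decreasing summand $e^{-k^2/(2(n-2))}/k$ from below by $\int_3^{K+1}e^{-x^2/(2(n-2))}/x\,dx$, the substitution $u=x^2/(2(n-2))$ converts the integral into $\tfrac12\bigl[E_1(9/(2(n-2))) - E_1(1/2)\bigr]$, where $E_1$ is the exponential integral. Using the asymptotic $E_1(z) = -\gamma - \ln z + O(z)$ as $z\to 0$, the first $E_1$ contributes $\tfrac12\ln n + O(1)$, giving $\E[F(n)] \ge \tfrac12\ln n - C$ with $C \to \tfrac12(\ln(9/2)+\gamma+E_1(1/2)) \approx 1.32$. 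This beats the target constant $2$ for large $n$; since $\tfrac12\ln n - 2 \le 0$ whenever $n\le e^4 \approx 55$ (where $\E[F(n)]\ge 0$ suffices), the statement for all $n\ge 1$ follows once the residual range of $n$ is handled by sharpening constants or finite computation.

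The main obstacle is constant-chasing: to recover the $\tfrac12$ coefficient of $\ln n$ the sum cannot be cut off well before $k\approx\sqrt{n-2}$, yet at $k=\sqrt{n-2}$ the factor $\exp(-k^2/(2(n-2)))$ has already dropped to about $e^{-1/2}$. A naive worst-case estimate of that factor over the summation range would lose a constant multiplicative factor in front of $\ln n$; the integral comparison above sidesteps this loss by integrating through the Gaussian decay.
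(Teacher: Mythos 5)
Your overall strategy is the same as the paper's: lower-bound $\E[F(n)]$ by the expected number of faces whose boundary is a simple $k$-cycle, compute $\E[F'_k] = \frac{1}{k}\prod_{i=0}^{k-1}\frac{n-i}{n-2}$ by linearity of expectation, and sum over $3 \le k \lesssim \sqrt{n}$. The key expectation formula and the truncation scale are identical to the paper's.

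Where you diverge is in how you estimate the product and the resulting sum. You pass to logarithms, use $\ln(1-x) \ge -x - x^2$ to get a Gaussian-type lower bound $\prod_{i=0}^{k-1}\frac{n-i}{n-2} \ge e^{-k^2/(2(n-2))}(1-o(1))$, then compare $\sum_k \frac{1}{k} e^{-k^2/(2(n-2))}$ against an integral and evaluate via the exponential integral $E_1$. This is genuinely sharper asymptotically: you extract a constant near $-1.32$ rather than the paper's $-2$. But the argument relies on unresolved error terms — the $(1-o(1))$ factor, the $O(z)$ error in the expansion $E_1(z) = -\gamma - \ln z + O(z)$ — and you explicitly defer the treatment of "the residual range of $n$." Since the theorem asserts the bound for all $n \ge 1$, and $\tfrac{1}{2}\ln n - 2 > 0$ already once $n \ge 55$, there is a nontrivial finite range in which your argument is not yet a proof. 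This is a genuine, if acknowledged, gap.

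The paper avoids the issue by using a cruder but uniform estimate: Bernoulli's inequality gives $\prod_{i=0}^{k-1}\bigl(1 - \tfrac{i}{n}\bigr) \ge 1 - \tbinom{k}{2}/n$, hence $\E[F'_k] \ge \tfrac{1}{k} - \tfrac{k-1}{2n}$ with no hidden error terms, and summing to $m = \lfloor\sqrt{2n}\rfloor$ gives $\E[F(n)] \ge H_m - 2 \ge \tfrac{1}{2}\ln n - 2$ directly for every $n$ for which the bound is nontrivial. To salvage your version you would need to make the error terms explicit (a uniform bound on $\sum_{i}\bigl(\tfrac{i-2}{n-2}\bigr)^2$ over $k \le \sqrt{n-2}$, plus an explicit tail bound in the $E_1$ expansion) and check the remaining finite range by hand; absent that, the Bernoulli route is the cleaner path to a complete proof, at the modest cost of a weaker additive constant.
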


In order to prove~\cref{thm:completeGraphsAllValues}, we split the proof into ranges based on the value of $n$ and use a different approach for each range.
In fact, we provide two theoretical upper bounds using a close examination of slightly different random processes.
The first one is easier to prove, but it gives an asymptotically inferior bound.
However, it is useful for small values of $n$. 
In the bound, we use the harmonic numbers $H_k\df \sum_{j=1}^k \frac{1}{k}$, whose value is approximately equal to $\ln n$.

\begin{restatable}{theorem}{logsqbnd}
  \label[Theorem]{thm:logsqBound}
  Let $n\geq 10$ be an integer.
  Then \[\E[F(n)] < H_{n-3}H_{n-2}.\]
\end{restatable}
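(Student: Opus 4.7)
The plan is to bound $\E[F(n)]$ by analyzing a lazy face-tracing process on a random embedding of $K_n$, reducing to a single-face calculation by symmetry. By linearity of expectation and the vertex-transitivity of $K_n$, one has $\E[F(n)] = n(n-1)\,\E[1/L]$, where $L$ is the length of the face containing a fixed directed edge $d_0 = (u_0,v_0)$. Hence it suffices to prove
\[
\E[1/L] \;\le\; \frac{H_{n-3}H_{n-2}}{n(n-1)}.
\]

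I would trace the face from $d_0$ by revealing the rotation lazily: at each step, if the current directed edge is $(a,b)$ and $R_b(a)$ has not been determined, it is drawn uniformly from the admissible image slots at $b$. The trace terminates the first time the next dart equals $d_0$. The outer harmonic factor arises from a uniformity observation at $u_0$: at the $j$-th visit to $u_0$ (via a dart $(x_j, u_0)$), the previously revealed values $R_{u_0}(x_1), \ldots, R_{u_0}(x_{j-1})$ are all distinct from $v_0$ on the event of no prior closure, so $v_0$ lies among the $n-j$ free image slots at $u_0$, and the closure probability at this visit is exactly $1/(n-j)$. A telescoping product then shows that the visit index $V_{u_0}$ at which closure occurs is uniformly distributed on $\{1,\ldots,n-1\}$. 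Using only $L \ge V_{u_0}$ already yields $\E[F(n)] \le n H_{n-1}$, matching Stahl's linear estimate from \cref{thm:Stahl_K_n}.

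To extract the second harmonic factor, I would study the $V_{u_0}$ sub-walks performed between consecutive visits to $u_0$. Each sub-walk is a first-passage back to $u_0$ through the other vertices, and a local analogue of the above argument shows that at each step the probability of arriving at $u_0$ is bounded by the reciprocal of the number of free image slots at the current vertex, which is at most $1/(n-2)$ in the early-stage regime relevant here. Treating the sub-walks as approximately independent and geometrically distributed with success parameter of order $1/n$, and applying the AM--HM inequality $\sum_i T_i \ge j^2 / \sum_i (1/T_i)$, I would deduce
\[
\E\bigl[1/L \;\big|\; V_{u_0} = j\bigr] \;\le\; \frac{\E[1/T_i]}{j},
\]
with $\E[1/T_i]$ of order $H_{n-3}/n$. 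Summing over $j$ and weighting by the uniform law of $V_{u_0}$ produces an upper bound on $\E[1/L]$ of the form $H_{n-3} H_{n-1}/(n(n-1))$, which a careful accounting of small correction terms sharpens to the desired $H_{n-3}H_{n-2}/(n(n-1))$ once $n$ is large enough (absorbed by the hypothesis $n \ge 10$).

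The main technical obstacle is justifying the inner step rigorously: rotations partially revealed during earlier sub-walks persist and shrink the admissible image sets in later sub-walks, so the $T_i$ are neither strictly independent nor exactly geometric. My proposed remedy is to couple with an idealized process in which each sub-walk is treated as an independent geometric first-passage using only the reveals performed at the current vertex; this stochastically dominates the true closure times, and the slack in $n \ge 10$ absorbs the small constant losses introduced by the approximation.
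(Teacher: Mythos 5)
Your starting identity $\E[F(n)] = n(n-1)\,\E[1/L]$ is correct, and the outer harmonic factor (the uniform law of the closing visit $V_{u_0}$) is a sound observation. But this is a genuinely different route from the paper, which never traces a single face: the paper inserts vertices one at a time, processes the darts of the newly added vertex $v_k$ in a fixed cyclic order, and shows that the $\ell$-th dart completes a face with probability at most $\tfrac{n-k}{n-2}\cdot\tfrac{1}{k(n-\ell)}$ (or $\tfrac{1}{n-\ell}$ when $k=1$), then sums over $\ell$ and $k$. In the paper's scheme each face is counted once, at its moment of completion, and the double harmonic sum falls out directly; no inverse-length estimate or excursion decomposition is needed.

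The inner harmonic factor in your proposal has three concrete gaps. First, the inequality $\E[1/L \mid V_{u_0}=j] \le \tfrac{1}{j^2}\sum_{i\le j}\E[1/T_i \mid V_{u_0}=j]$ is fine, but replacing the conditional expectations by unconditional ones is not: the event $V_{u_0}=j$ (closure at the $j$-th return and not before) is determined by the same lazy reveals that build the $T_i$, and nothing is offered to control this correlation. Second, your coupling runs in the wrong direction. You need an \emph{upper} bound on $\E[1/T_i]$, hence a stochastic \emph{lower} bound on $T_i$; but the true process has admissible sets that shrink as rotations are revealed, so the true excursions are stochastically \emph{shorter} than the idealized independent geometric ones. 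If the idealized process dominates the true $T_i$ (as you claim), that gives $\E[1/T_i^{\mathrm{true}}] \ge \E[1/T_i^{\mathrm{ideal}}]$ — a lower bound, not the upper bound you need. Third, even granting these steps, your own accounting produces $H_{n-3}H_{n-1}/(n(n-1))$, which is strictly larger than the target $H_{n-3}H_{n-2}/(n(n-1))$ for every $n$; the deficit $H_{n-3}/(n-1)$ does not vanish by taking $n\ge 10$, and you give no mechanism for recovering it. As written, the proposal establishes only the easier $\E[F(n)]\le nH_{n-1}$ (comparable to Stahl's general $n\ln n$ bound) and sketches, without a workable justification, how one might hope to improve it to logarithmic-squared.
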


Note that proof of \cref{thm:logsqBound} also works for $n\ge 4$, but yields a slightly worse bound (see \cref{eq:logsqForComputation}), which we have not stated above.
Moreover, we used \cref{eq:logsqForComputation} to estimate values for $n\le 242$ using computer and this implies \cref{thm:completeGraphsAllValues} ($\E[F(n)]\le5\ln n +5$) for this range; see \sv{\cite[Section 5]{ourArxiv}}\lv{\cref{sec:smallValues}} for the details.

\def\ComputThr{40748}
The next theorem is our core result that implies \cref{thm:completeGraphsAllValues} for $n> \ComputThr$. 

\begin{restatable}{theorem}{logbnd}
  \label{thm:logBound}
  For $n \geq e^{e^{16}}$, $\E[F(n)] \leq 3.65\ln(n)$.
  For $n \geq e^{30}$, $\E[F(n)] \leq 5\ln(n)$.
  For $e^{10.6}\approx\ComputThr \leq n < e^{30}$, $\E[F(n)] \leq 10\ln(n)+2$.%
\end{restatable}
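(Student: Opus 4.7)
The strategy is to refine the face-tracing/lazy-rotation technique that underlies \cref{thm:logsqBound} in order to shave off the superfluous $\ln n$ factor. I would analyze a sequential process that reveals the rotation system of $K_n$ vertex by vertex, while simultaneously tracing the faces of the random embedding.

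First, I would set up the face-tracing random process: iteratively pick a yet-unused directed edge $(u,v)$ and follow the face through it, committing entries of $R_v$ only when the tracing actually requires them. Because at each step the unrevealed portion of $R_v$ remains a uniformly random cyclic permutation on the edges not yet assigned a cyclic neighbor, the conditional probability that the current face closes at a given step has a clean form involving $1/(n-1-k)$, where $k$ is the number of edges at the current vertex whose rotation neighborhood has already been revealed. Summing these probabilities along a face gives contributions of the form $\logq{k}$, plus error terms controlled by $\rest{k}$.

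Second, I would express $\E[F(n)]$ as the expected number of face-closing events in this process and aggregate the contributions by vertex rather than by face. The key improvement over \cref{thm:logsqBound} is that, when many rotation entries at $v$ are already committed, the subsequent traces passing through $v$ are extremely likely to close quickly; conversely, faces traced early pass through vertices whose rotations are nearly free, and thus tend to be long. Exploiting this trade-off via a telescoping summation of $\logq{k}$-type terms should convert the $H_{n-3}H_{n-2}$ bound into a genuine $O(\ln n)$ bound.

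The main obstacle, as usual in this sort of argument, is handling the correlation between different face traces cleanly: once some entries of the rotation system have been fixed, later faces are no longer independent. To avoid losing a logarithmic factor, I would establish a monotonicity statement showing that committing more of the rotation system only increases the conditional probability of closing, and use it to decouple the per-face analysis and obtain the desired telescoping. Finally, to get the three constants $3.65$, $5$ and $23$ in the three stated ranges, I would carry the lower-order corrections $\rest{k}$ explicitly through the final estimate: the constant $3.65$ is the genuine asymptotic one and requires $n \ge e^{e^{16}}$ so that all $o(1)$ corrections are negligible, while $5$ and $23$ absorb the worst-case contributions of these corrections at smaller $n$, verified by a direct case analysis of the relevant thresholds.
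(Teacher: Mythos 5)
Your sketch captures the broad \emph{flavor} of the paper's approach — processing vertices sequentially, revealing rotation entries lazily, and aiming for a per-vertex contribution summing to $O(\ln n)$ — but it omits essentially all of the ideas that actually make the argument close, and the parts you do make specific are not what the paper does.

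The central device you are missing is the classification of partial facial walks at step $k$ into \emph{1-open active faces} (counted by $O_k$) and \emph{potential faces} (counted by $PF_k$), together with a specially crafted processing order $\sigma_k$ of the darts at $v_k$ (1-open darts first, potential darts second, non-contributing darts last). That ordering is what lets one prove \cref{lem:VertexContribution}: the expected number of faces closed at $v_k$, conditioned on $O_k=\xi$ and $PF_k=t$, is at most $q(\xi,t)=H_{n-\xi-2}-H_{n-\xi-t-2}$. Without that conditioning, a "telescoping of $\logq{k}$-type terms" along face traces simply reproduces the $H_{n-3}H_{n-2}$ bound of \cref{thm:logsqBound}; the improvement to $O(\ln n)$ comes from the fact that $q(\xi,t)$ is small when $t$ is small, and $\E[PF_k]\le (n-k)/k$ is small for most $k$. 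Your proposed "monotonicity statement" (that committing more rotations only increases the closing probability) is not the decoupling tool used and, as stated, does not by itself yield a per-vertex bound of the form $q(O_k,PF_k)$.

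You also omit the entire second half of the argument. Bounding $\sum_k \E[q(O_k,PF_k)]$ is not a clean telescope: the paper splits the triple sum into four regimes $S_1,\ldots,S_4$ (by whether $k=1$, whether $PF_k$ exceeds its mean $(n-k)/k$, and whether $O_k\ge \nubar n$), uses a second-moment bound on $PF_k$ (\cref{lem:power_of_expectation}, which requires a separate count of strongly $2$-open faces), applies Hoeffding's inequality plus Markov to control the rare event $O_k \ge \nubar n$, and — crucially — runs the whole estimate as an induction on $n$ with computer-verified base cases for $n\le 4157$ (\cref{prop:smallValues}). None of this appears in your proposal, and the last two items cannot be discovered from the "carry $\rest{k}$ explicitly" viewpoint: the three constants $3.65$, $5$, $23$ arise from tuning the threshold $\nu$ and the exponent $\mu$ across different ranges of $n$ and from the inductive additive constant $\aleph$, not from the $O(1/n^2)$ error term in the harmonic-number estimate. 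As written, the proposal is a plausible starting heuristic but does not constitute a proof or a proof outline from which the theorem could be reconstructed.
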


For small values of $243\le n\le \ComputThr$ we used a computer-assisted proof which is based on our general estimates given in the proof of~\cref{thm:logBound} combined with pre-computed bounds for smaller values of $n$ and Markov inequality. 
We will give more details on our computation in~\sv{\cite[Section 5]{ourArxiv}}\lv{\cref{sec:smallValues}}.
We summarize the results of computer-calculated upper bounds in the following proposition.
Note that having a small additive constant for small values of $n$ helps us to keep smaller additive constants for middle values of $n$ as our proof is inductive.

\begin{restatable}{proposition}{propComput}
  \label[Theorem]{prop:smallValues}
 For $1\leq n \leq \ComputThr$, $\E[F(n)] \leq 5\ln(n)+5$.
\end{restatable}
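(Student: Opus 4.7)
The plan is to verify $\E[F(n)]\le 5\ln n + 5$ by a finite computer-assisted check that combines, at each $n$ in the range, the three tools already at our disposal: the quadratic logarithmic bound of \cref{thm:logsqBound} (in the form \cref{eq:logsqForComputation} valid already for $n\ge 4$), the recursive estimate developed inside the proof of \cref{thm:logBound}, and Markov's inequality used to tame tail events in that recursion. For each $n$ we compute the best upper bound on $\E[F(n)]$ produced by these methods, feeding in the previously certified bounds for $\E[F(k)]$ with $k<n$, and verify the resulting number does not exceed $5\ln n +5$.

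Concretely, for the smallest values $n\le 3$ the value of $\E[F(n)]$ is known exactly (Table~\ref{tab:distros}) and the inequality can be checked by hand. Let $n_0$ denote the largest $n$ for which $H_{n-3}H_{n-2}\le 5\ln n +5$; a trivial numerical search shows $n_0$ is only a few hundred, and for $4\le n\le n_0$ the proposition follows immediately from \cref{thm:logsqBound}. For the remaining range $n_0<n\le 4157$, the recursion at the core of \cref{thm:logBound} expresses $\E[F(n)]$ as a weighted combination of $\E[F(k)]$ for a carefully chosen set of smaller $k$, plus additive error terms whose tails are controlled by Markov's inequality. Plugging in the certified bounds $\E[F(k)]\le 5\ln k +5$ for the relevant $k<n$ and optimizing over the splitting parameter of the recursion, one obtains an explicit upper bound on $\E[F(n)]$ that is compared to $5\ln n +5$ numerically.

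The main obstacle is constant propagation: the proof of \cref{thm:logBound} is tuned for $n\ge 4158$, where the leading $3.65\ln n$ term comfortably absorbs the lower-order contributions, whereas at the small end of our range these contributions are not negligible. This is precisely why the target carries the generous additive slack $+5$. Because the induction is forward in $n$, the slack accumulated at earlier steps buffers the inductive step and prevents the bound from compounding beyond $5\ln n + 5$, provided that at each $n$ the splitting parameter in the recursion is re-optimized. The resulting check is a finite, deterministic loop over $n=1,\dots,4157$, whose output serves as the certificate of \cref{prop:smallValues}.
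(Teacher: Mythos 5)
Your proposal follows essentially the same route as the paper: check tiny cases directly, use the bound from \cref{eq:logsqForComputation} (the computational form of \cref{thm:logsqBound}) up to a few hundred vertices, and then, for the remainder of the range, feed certified bounds for smaller values back into the $S_1+S_2+S_3+S_4$ decomposition from \cref{thm:logBound} while re-optimizing the splitting parameter $\nu$ and using Markov's inequality to control tails. This is precisely the structure of the paper's Sage-assisted verification in \cref{sec:smallValues}.
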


In summary, the proofs of the above results for complete graphs are relatively long. A ``log-square'' improvement of Stahl's linear bound is not that hard, but the $O(\log n)$ bound appears challenging and shows all difficulties that arise for more general dense graph classes. 

\bigskip

In the second part of the paper, we turn to more general random graph families. 
Let $F(n,p)$ be the random variable for the number of faces in a random embedding of a random graph in $G(n,p)$.
We will first show a bound on the expectation of this variable which holds for any value of $p$.

\begin{restatable}[]{theorem}{randomGraphs}
  \label[Theorem]{thm:randomgraphs}
  Let $n$ be a positive integer and $p \in (0,1]$ $(p = p(n))$.  Then we have 
  \[
    \E[F(n,p)] \leq H_n^2 + \frac{1}{p}.
  \]
\end{restatable}

\cref{thm:randomgraphs} gives a ``log-square'' general bound which can be improved in the sparse regime as well as in the dense regime (for multigraphs).
First, we state a general result for random embedding of random maps with fixed degree sequence.
In other words, we will investigate random embeddings of random multigraphs possibly with loops sampled uniformly out of multigraphs with the same fixed degree sequence.
Some results of this flavor have been obtained earlier in the setup of ``random chord diagrams'', see \cite{ChPi13,LiNo11}.

\begin{restatable}[]{theorem}{thmmulti}
\label{thm:multigraphs}
Let ${\mathbf d} = (t_{1},t_{2},\dots,t_{n})$ be a degree sequence for an $n$-vertex multigraph (possibly with loops) where $t_i \ge 2$ for all $i$.
Let $\E[F_{\mathbf d}]$ be the average number of faces in a random embedding of a random multigraph with degree sequence~${\mathbf d}$.
Then $\E[F_{\mathbf d}]=\Theta(\log n)$.
\end{restatable}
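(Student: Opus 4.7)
The plan is to reformulate the question in the configuration model and then analyze an incremental revelation of the matching. Set $2m=\sum_i d_i$, label the darts $1,\dots,2m$, and partition them into $n$ blocks of sizes $d_1,\dots,d_n$. A uniformly random embedding of a uniformly random multigraph with this degree sequence is encoded by an independent pair $(\sigma,\rho)$, where $\sigma$ is a uniform perfect matching on the darts (the edges) and $\rho$ is a uniform random cyclic ordering within each block (the rotation). The number of faces $F$ equals the number of cycles of the face-tracing permutation $\rho\sigma$. Because $d_i\geq 2$, $\rho$ has no fixed points, and since the distribution of $\sigma$ is conjugation-invariant, the distribution of $F$ depends on $\rho$ only through its cycle type.

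For both the upper and the lower bound, I would reveal $\sigma$ one pair at a time: at step $t=1,\dots,m$, match the smallest currently unmatched dart $a_t$ to a uniformly chosen unmatched partner $b_t$. I maintain the partial functional graph of $\rho\sigma$ on the matched darts; its components are either closed cycles (completed faces) or open paths running from a source (a matched dart whose $\rho$-predecessor is unmatched) to a sink (an unmatched dart). At step $t$ one adds two arrows $a_t\mapsto\rho(b_t)$ and $b_t\mapsto\rho(a_t)$, and the face count increases by exactly the number of those arrows that close a path into a cycle. A closure along the arrow out of $a_t$ happens precisely when $\rho(b_t)$ is the source of the open path ending at $a_t$; since $b_t$ is uniform over $2k_t-1$ unmatched darts (with $2k_t$ darts unmatched just before the step), each closure event has conditional probability $\tfrac{1}{2k_t-1}$, modulo a small number of degenerate configurations that are ruled out by $d_i\geq 2$.

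Summing over the $m$ steps then yields
\[
\E[F]=\sum_{t=1}^{m}\E[\Delta F_t]=\sum_{k=1}^{m}\frac{2}{2k-1}+O(1)=\ln m+O(1),
\]
which is $\Theta(\ln n)$ since $n\leq m\leq \binom{n}{2}+n$ for any admissible degree sequence with $d_i\geq 2$. This single computation simultaneously delivers the $O(\ln n)$ upper bound and the $\Omega(\ln n)$ lower bound, provided the generic-case probability estimate is shown to also be a lower bound up to a constant factor.

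The main obstacle is the rigorous accounting of the degenerate configurations where the source of the open path ending at $a_t$ coincides with $\rho(a_t)$, or where the two arrows added at a single step interact nontrivially (for instance when $a_t$ and $b_t$ lie in the same vertex block, so that $\rho(a_t)$ and $\rho(b_t)$ are among the unmatched darts altered by the step). These events correspond to very short open paths (length $0$ or $1$) and to within-block edges, and the key input that keeps their total contribution to $\E[F]$ bounded by a constant, rather than growing with $n$, is that $\rho$ is fixed-point-free and that the number of such short-path configurations per step is controlled by the current matched fraction of each block. Once this bookkeeping is carried out in both directions, the matching estimate $\E[F]=\ln m+O(1)=\Theta(\ln n)$ follows.
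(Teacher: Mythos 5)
Your approach is genuinely different from the paper's, so let me first compare the two and then point to where yours is incomplete.

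\paragraph{What the paper does.} The paper fixes $R \in C_{\bf d}$ (as you do, by conjugation invariance) but then performs a \emph{global count}: it enumerates possible faces $f$ by their unique length $u(f)=k$, shows via a completion argument (Lemma~\ref{lem:completion}) that each such $f$ lies in exactly $|C_{2^{m-k}}|$ maps, and reduces the expectation to $\sum_k h_k/\bigl((2m-1)(2m-3)\cdots(2m-2k+1)\bigr)$ (Lemma~\ref{lem:hk}). The whole technical burden is then pushed into bounding the rooted-face counts $g_k$ above (Lemma~\ref{lem:ubound}) and below (Lemma~\ref{lem:lbound}), where the degenerate/short-path configurations you worry about are handled by reserving and relabeling a small, explicit set of darts (the darts $d',d''$ and the $1$-open forbidden choice), at the cost of constant multiplicative factors. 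The resulting bounds are $\tfrac12(H_m-1) \le \E[F_{\bf d}] \le 4H_m+4$, a constant-factor gap, not an additive-$O(1)$ gap. Your approach instead reveals the matching $\sigma$ pair-by-pair and tracks the evolving functional graph of the partial face permutation, which is closer in spirit to the step-by-step ``Random process'' machinery the paper uses for $K_n$ in Sections~\ref{sec:logsqBound}--\ref{sec:logBound}, not to Section~\ref{sec:randomBnd}.

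\paragraph{The gap.} You explicitly defer the heart of the argument: the ``rigorous accounting of the degenerate configurations.'' This is not a cosmetic loose end; it is exactly where the paper spends its effort and loses constant factors. To make your step-by-step estimate work you need, for each step $t$, a two-sided bound on the conditional probability of closure given the history, and the conditional distribution of the partial matching (hence of the sources/sinks, of the short-path count, and of the probability that $s_a=\rho(a_t)$ or that $a_t,b_t$ lie in the same block) is not uniform and evolves in a history-dependent way. Your claim that these events contribute $O(1)$ total to $\E[F]$ is asserted, not derived; the heuristic ``controlled by the current matched fraction of each block'' is not a bound. In fact the paper's lower bound $\tfrac12 H_m - O(1)$ versus upper bound $4H_m + O(1)$ suggests an elementary argument naturally loses a constant \emph{factor}, not just an additive constant, in the degenerate bookkeeping. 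Secondly, your final conversion relies on $n \le m \le \binom{n}{2}+n$, which is false for multigraphs: with loops and parallel edges, $m$ is unbounded in terms of $n$. (The paper's own Theorem~\ref{thm:multi:main} is stated in terms of $H_m$, and its passage to $\Theta(\ln n)$ is also implicit; but you should not justify it by a bound on $m$ that only holds for simple graphs.) In summary: the revelation-of-matching framework is a legitimate and arguably cleaner route, but as written it proves nothing until the deferred two-sided control of the degenerate events is actually carried out, and that control is the substance of the theorem.
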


However, we are mostly interested in simple graphs.
For larger degree sequences, the majority of random embeddings generated in the model of Chmutov and Pittel \cite{ChPi13,LiNo11} will not be simple. 
Therefore, we will be focusing on degree sequences with bounded parts while we allow $n$ to grow to infinity. 
Given a degree sequence ${\mathbf d} = (t_{1},t_{2}, \dots, t_{n})$, let
\[
  m_{\mathbf d} = \frac 12 \sum_i t_{i} \quad \textrm{and} \quad \lambda_{\mathbf d} := \frac{1}{2m_{\mathbf d}}\sum_{i=1}^n \binom{t_{i}}{2}.
\]
Janson \cite{Janson2009} showed that a random multigraph with degree sequence ${\mathbf d}$ is asymptotically almost surely \emph{not} simple unless $\lambda_{\mathbf d} = O(1)$. This means, for example, that the probability of a $d$-regular multigraph on $n$ vertices being simple is bounded away from 0 only if $d$ is constant (while $n$ grows arbitrarily). 
Restricting our attention to the case where vertex degrees are bounded by an absolute constant, Janson's result tells us that simple graphs make up a nontrivial fraction of all multigraphs with a given degree sequence. In fact, this special case of Janson's result was obtained over 30 years earlier by Bender and Canfield \cite{BC78}. 
We prove that, in the case of random \emph{simple} graphs with constant vertex degrees, we preserve logarithmic bounds on the expected number of faces.
\begin{restatable}[]{theorem}{thmsimple}
  \label{thm:randomgraphsmallp}
  Let $d\ge 2$ be a constant, $\varepsilon>0$, and 
  let ${\mathbf d} = (t_{1},t_{2},\dots,t_{n})$ be a degree sequence for some $n$-vertex simple graph with $2 \le t_i \le d$ for all $i$, and such that $m_{\mathbf d}\ge (1+\varepsilon)n$.
Let $\E[F_{\mathbf d}^s]$ be the average number of faces in a random embedding of a random simple graph with degree sequence~${\mathbf d}$.
Then $\E[F_{\mathbf d}^s] = \Theta_{\varepsilon}(\log n)$ (constants within $\Theta$ depend on $\varepsilon$).
\end{restatable}

In the light of the above theorems and our Monte Carlo experiments, we conjecture that a logarithmic upper bound should be achievable for any usual model of random graphs.
However, extending our proof of~\cref{thm:completeGraphsAllValues} to arbitrary random graphs seems to require further ideas. 

\begin{Conjecture}\label{con:random}
  Let $p=p(n)$ be the probabillity of edges in $G(n,p)$.
  The expected number of faces in a random embedding of a random graph $G\in G(n,p)$ is \[(1+ o(1))\ln(pn^2).\]
\end{Conjecture}

We refer to \cref{sec:open} for further discussion on conjectures and open problems that are motivated by our results.

\bigskip

\paragraph{Structure of the paper.}
Before we dive into proofs we will present our common strategy and formalization used in Theorems~\ref{thm:logsqBound} and \ref{thm:logBound} in~\cref{sec:strategy}.
First, we present the easier proof of \cref{thm:logsqBound} in \cref{sec:logsqBound}.
Our main result (\cref{thm:logBound}) on complete graphs can be found in \cref{sec:logBound}.
{In \cref{sec:smallValues}, we describe how the estimates presented in~\cref{sec:logBound} were used to compute the bounds for small values of $n$ using computer evaluation.
We conclude the complete graph sections with a short proof of our lower bound (\cref{thm:lb}) in \cref{sec:lb}.  
The proof of~\cref{thm:randomgraphs} is given in \cref{sec:random}. 
The paper closes with proofs of~Theorems~\ref{thm:multigraphs} and~\ref{thm:randomgraphsmallp} in \cref{sec:randomBnd}.
In \cref{sec:open}, we discuss conjectures and open problems.

\subsection{Preliminaries}\label{sub:prelim}

\medskip
\paragraph{Combinatorial maps.~}
To describe 2-cell embeddings of graphs we need a formal definition of a map.  A \emph{combinatorial map} (as introduced in \cite{jacques1970constellations,ringel2012map}) is a triple $M=(D,R,L)$ where
\begin{itemize}
    \item $D$ is an abstract set of \emph{darts};
    \item $R$ is a permutation on the symbols in $D$;
    \item $L$ is a fixed point free involution on the symbols in $D$.
\end{itemize}
Combinatorial maps are in bijective correspondence with $2$-cell embeddings of graphs on oriented surfaces, up to orientation-preserving homeomorphisms. 
See \cite[Theorem 3.2.4]{MT01} for a proof.  
We give details of this correspondence.  Let $G=(V,E)$ be a graph on $n$ vertices, where $V=\{v_1, \dots, v_n\}$. 
\begin{itemize}
    \item For $i \in [n]$, let $D_i$ be the set of all pairs $(v_i,e)$ where $e$ is an edge incident with $v_i$.  Note that $|D_i|=t_i$ is the degree of $v_i$.  Let $D = D_1 \cup \dots \cup D_n$ be the set of all darts.
    \item For each $i\in [n]$, we let $R_i$ be a unicyclic permutation of darts in~$D_i$, in clockwise order as they emanate from $v_i$ on the surface.  So, $R_i(d)$~is the dart following $d$ in the clockwise order given by~$R_i$, and conversely $R_i^{-1}(d)$ is the dart preceding $d$ in this cyclic order.   We let $R=R_1 R_2 \cdots R_n$, and call $R$ a \emph{rotation system}.
    \item We let $L$ be a permutation of~$D$ consisting of 2-cycles swapping $(v_i,e)$ with $(v_j,e)$ for each edge $e=v_iv_j$.  We call $L$ an \emph{edge scheme}.
    \item The cycles of the permutation $R \circ L$ give the faces of the embedding.
\end{itemize}

Conversely, starting with a combinatorial map $M=(D,R,L)$, we define the graph whose vertices are the cycles of $R$, and whose edges are the 2-cycles of $L$.

\medskip

\paragraph{Random embeddings.~}

Fix an arbitrary edge scheme $L$ of a graph $G$.  It is well known that all 2-cell embeddings of $G$, up to homeomorphism, are given by the set of all $(D,R,L)$ over all rotation systems $R$.  We call an embedding chosen uniformly at random from the set of these maps a \emph{random embedding} of $G$.

Now fix some rotation system $R$. 
Intuitively, given $G$ we know what vertices are connected by an edge, say $uv\in E(G)$, but within the dart model, we do not know what particular dart incident with $u$ connects to a particular dart of $v$.
Hence, we argue that we can model a random embedding of $G$ just by picking what darts form the edges uniformly at random.
Indeed, a simple counting argument shows that for $G$ with degree sequence $t_1,\ldots,t_n$, there are $t_1!t_2!\dots t_n!$ possible edge schemes.  Moreover, each embedding of $G$ is given by $t_1t_2 \cdots t_n$ different edge schemes.  In particular, each embedding is given by the same number of edge schemes.  Therefore, we may also obtain a random embedding of $G$ by fixing some rotation system $R$ and picking a uniform at random edge scheme.  This is the model, which we will use in \cref{sec:logsqBound}. 

Thirdly, we may vary both the local rotation and the edge scheme.  Picking a uniform at random rotation system and edge scheme also gives a random embedding of $G$.  This is the model, which we will use in \cref{sec:logBound}.

\medskip

\paragraph{Partial maps and temporary faces.~}

Our proofs will involve building up a map step by step.  Therefore we will need a notion of a partially constructed map.  A \emph{partial map} is defined in the same way as a map $(D,R,L)$, except $L$ need not be fixed point free.  We define the darts that are in 2-cycles in $L$ as \emph{paired darts} and the darts that are fixed points in $L$ as \emph{unpaired darts}\phantomsection\label{def:unpaired}.  

The \emph{faces} of the implied embedding of a map $M=(D,R,L)$ are given by the orbits of $R \circ L$. One of our main interests in this paper will be the number of faces.  In a partial map, each cycle in $R \circ L$ may contain some number of unpaired darts and/or paired darts.  For a partial map $(D,R,L)$, a cycle of $R \circ L$ is a \emph{completed face} if it contains only paired darts, and a \emph{temporary face} if it contains at least one unpaired dart.  In particular, we say a temporary face is \emph{$k$-open} if it contains precisely $k$ unpaired darts.  We say that a temporary face $f$ is \emph{strongly $2$-open} if $f$ is $2$-open and the two unpaired darts in $f$ are incident with different vertices.

Our proofs are often stated in terms of \emph{facial walks}.  For a completed face, this is simply a walk around the boundary of the face.  For temporary face, this is a walk where we travel along the paired darts which make up edges, but walk through any unpaired dart.  Let $f$ be a $k$-open face and let $d_1, d_2, \ldots, d_k$ be the unpaired darts that belongs to $f$ in their anti-clockwise order of appearance on a facial walk around $f$. For each $i$ ($1\le i\le k$), we call the segment of a facial walk around $f$ from $d_i$ to $d_{i+1}$ the \emph{partial facial walk} (\emph{partial face}) with initial dart $d_i$ and ending dart $d_{i+1}$. (We also say that this partial face \emph{leads from $d_i$ to $d_{i+1}$}. Note that each unpaired dart is the initial dart for precisely one partial face and is also the ending dart of precisely one partial face.

 We will use the following precise estimate for the \emph{harmonic numbers} $H_n \df 1 + \frac12 + \cdots + \frac{1}{n}$.
\begin{theorem}[Estimate of $H_n$~\cite{DeT93}]\phantomsection\label{thm:Hnconv}
  Let $n\ge 1$.
  \[
    H_n = \ln\left(n+\tfrac{1}{2}\right) + \gamma + \varepsilon_n,
  \]
where $\frac{1}{24(n+1)^2} \le \varepsilon_n \le \frac{1}{24n^2}$ and $\gamma\approx 0.57721$ is the Euler–Mascheroni constant.
\end{theorem}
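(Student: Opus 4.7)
The plan is to introduce the shifted remainder $R_n := H_n - \ln(n+1/2)$ and show that $(R_n)$ decreases monotonically to $\gamma$ at rate $\Theta(1/n^2)$, so that $\varepsilon_n = R_n - \gamma$ satisfies the stated bounds.

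First I would compute the one-step decrement
\[
R_n - R_{n+1} \;=\; \ln\frac{n+3/2}{n+1/2} - \frac{1}{n+1}.
\]
Writing $\frac{n+3/2}{n+1/2} = \frac{1+u}{1-u}$ with $u = \frac{1}{2(n+1)}$ and using the power series $\ln\frac{1+u}{1-u} = 2u + 2\sum_{j\ge 1}\frac{u^{2j+1}}{2j+1}$, the leading term $2u = \frac{1}{n+1}$ cancels, leaving the strictly positive identity
\[
R_n - R_{n+1} \;=\; \sum_{j\ge 1}\frac{1}{(2j+1)\,4^j\,(n+1)^{2j+1}}.
\]
Hence $(R_n)$ is strictly decreasing, and since the classical limit $H_n - \ln n \to \gamma$ gives $R_n \to \gamma$, telescoping yields the clean series representation
\[
\varepsilon_n \;=\; \sum_{k\ge n}(R_k - R_{k+1}) \;=\; \sum_{j\ge 1}\frac{1}{(2j+1)\,4^j}\sum_{k\ge n+1}\frac{1}{k^{2j+1}}.
\]

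For the lower bound I retain only the $j=1$ term and apply the standard integral comparison for the decreasing function $x^{-3}$:
\[
\varepsilon_n \;\ge\; \frac{1}{12}\sum_{k\ge n+1}\frac{1}{k^3} \;\ge\; \frac{1}{12}\int_{n+1}^{\infty}\frac{dx}{x^3} \;=\; \frac{1}{24(n+1)^2}.
\]

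For the upper bound, the convexity of $x\mapsto x^{-(2j+1)}$ gives the sharper midpoint-rule estimate $\sum_{k\ge n+1} k^{-(2j+1)} \le \int_{n+1/2}^{\infty} x^{-(2j+1)}\,dx = \frac{1}{2j\,(n+1/2)^{2j}}$, so
\[
\varepsilon_n \;\le\; \sum_{j\ge 1}\frac{1}{2j(2j+1)\,4^j\,(n+1/2)^{2j}}.
\]
The $j=1$ contribution is $\frac{1}{24(n+1/2)^2}$, already below the target $\frac{1}{24 n^2}$. The main and only delicate point is to show that the gap $\frac{1}{24n^2} - \frac{1}{24(n+1/2)^2} = \Theta(n^{-3})$ dominates the tail $\sum_{j\ge 2}(\cdot)$. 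Bounding the tail by a geometric series in $\frac{1}{4(n+1/2)^2}$ (using $2j(2j+1) \ge 20$ for $j\ge 2$) produces a term of order $n^{-4}$, and comparing the resulting numerical constants closes the inequality for all $n\ge 1$, with the base case verified by direct substitution.
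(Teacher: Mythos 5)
The paper does not prove this statement; it is quoted as a known result from DeTemple's 1993 note, so there is no in-paper argument to compare against. Your proof is correct and complete: the telescoping identity $\varepsilon_n = \sum_{k\ge n}(R_k - R_{k+1})$ with $R_n = H_n - \ln(n+1/2)$, the series expansion of $\ln\frac{1+u}{1-u}$ to cancel the leading $1/(n+1)$ term, the left-endpoint integral bound for the lower estimate, and the midpoint-rule (convexity) bound for the upper estimate all check out, and the final tail-versus-gap comparison does close uniformly — the ratio works out to $\frac{3n}{10(n+1)(4n+1)} < 1$ for all $n\ge 1$, so no separate base-case enumeration is actually needed beyond what you sketch. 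This is in substance the same centered-logarithm argument DeTemple gives in the cited reference, with the two $1/24$ constants arising exactly as you derive them from the $j=1$ term.
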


The above lower bound works also for $n=0$ since $H_0 = 0 \ge \ln\left(\tfrac{1}{2}\right) + \gamma + \tfrac{1}{24}$.

\newcommand{\ConstC}[0]{\ensuremath{\sqrt{12}}}
\newcommand{\ConstA}[0]{\ensuremath{\sqrt{\frac{4}{3}}}}
\newcommand{\ConstAnew}[0]{\ensuremath{\sqrt{\frac{8}{3}}}}
\newcommand{\ConstInduction}[0]{\aleph}
\newcommand{\ConstInductionLarge}[0]{\beth}
\newcommand{\ConstB}[0]{\ConstInduction}
\newcommand\comSplitP{22456}

\section{Our proof strategy for complete graphs}\label{sec:strategy}

We give two proofs of a bound on $\E[F(n)]$.
One gives an asymptotically worse bound, but will be useful to give the best estimates for small values of $n$.
The other one is more involved and requires rather tedious computation.  Here we present an intuition on both proofs and introduce a bit more terminology.

\medskip

\paragraph{Log-square bound.}
For this proof, we will fix an arbitrary rotation system and pick a uniform at random edge scheme.  We will work with a random process that builds a random edge scheme step by step.  First, we order the vertices of a graph $G$ arbitrarily.  We represent the ordering as $v_1,v_2,\dots,v_n$, and we process vertices one by one, starting with $v_n$.  When processing a vertex $v_k$, since we fixed a rotation system, the cyclic rotation of darts in $D_k$ is fixed.  We process the darts incident with $v_k$ in this fixed order.  At each step we either keep this dart unpaired or pick another random dart to pair this dart with to make a $2$-cycle in $L$, as defined precisely in \hyperref[def:rpA]{Random Process~A}.  An analysis of this process in Section~\ref{sec:logsqBound} will give Theorem~\ref{thm:logsqBound}.

\medskip

\paragraph{Logarithmic bound.}
\lv{
Our main tool is a simplified and refined approach of Stahl~\cite{Stahl1995}, which gives the best previously-known upper bound of $n+\ln(n)$ stated in \cref{thm:Stahl_K_n}.  Stahl's strategy can be summarized as follows.
First, we order the vertices of a graph $G$ arbitrarily.
We represent the ordering as $v_1,v_2,\dots,v_n$, and we process vertices one by one, starting with $v_n$.
For each $i\in [n]$, we process all darts in $D_i$, one-by-one.
In this way, we construct a decision tree of all possible embeddings.
In each node $t$ of the decision tree, one of yet unprocessed darts $d$ is processed, which means we enumerate all available choices for $\Next(d)$.
For each such choice, we obtain a son of the node $t$ in the decision tree.
It is straightforward to verify that at most one such choice actually \emph{completes a face}, i.e., both $d$ and $\Next(d)$ are part of a face in $R \circ L$ which was completed once $\Next(d)$ was determined. 
We formalize this fact as the following observation, which can be attributed to Stahl~\cite{Stahl1995}. 

\begin{observation}[\cite{Stahl1995} (reformulated)]\label{o:at-most-one-face}
  Let $d$ be a dart of $G$ such that $\Next(d)$ is undefined.
  Among all valid choices for $\Next(d)$ at most one completes a face containing $d$. 
\end{observation}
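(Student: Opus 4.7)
My plan is to analyse how specifying $\Next(d) = d'$ perturbs the partially built permutation $R \circ L$ and to exploit the fact that exactly one of its transitions is altered by the step. A face is by definition a cycle of $R \circ L$, and setting $R(d) = d'$ changes the value of $R \circ L$ at exactly one point, namely $L(d) \mapsto R(L(d)) = d'$. Consequently any face not already closed before the step can only be closed \emph{through} this single new transition, which forces the completed face (if any) to contain both $L(d)$ and $d'$.

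Next I would describe the partial face structure just before the step. The partially defined permutation $R \circ L$ decomposes into a family of already-completed cycles together with a collection of partial paths. Because $R(d)$ is undefined, the forward successor of $L(d)$ under $R \circ L$ is also undefined, so $L(d)$ must be the forward endpoint of a unique partial path $P$; let $a$ denote the backward endpoint (``start'') of $P$. Moreover, for $\Next(d) = d'$ to be a valid choice we need $R^{-1}(d')$ undefined — otherwise $R$ would fail to be a permutation at the vertex containing $d$ — so every valid $d'$ is itself the start of some partial path.

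The main step is then a simple uniqueness argument. Setting $\Next(d) = d'$ concatenates the partial path ending at $L(d)$ with the partial path starting at $d'$. A cycle closes precisely when these two partial paths coincide, which happens if and only if $d' = a$; every other valid choice merely merges two distinct partial paths into a single longer partial path without completing any face. Since $a$ is uniquely determined by $L(d)$, at most one valid choice of $d'$ completes any face at all, and \emph{a fortiori} at most one completes a face containing $d$. I do not foresee any real obstacle; the only subtlety worth checking is that $L(d)$ cannot already lie on a completed cycle, and this is immediate because the forward successor of $L(d)$ in $R \circ L$ depends on $R(d)$, which is undefined by hypothesis.
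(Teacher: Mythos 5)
Your proof is correct and is, in spirit, the same argument the paper relies on (the paper states the observation without a standalone proof, attributing it to Stahl; the same compressed reasoning resurfaces inside the proof of Lemma~\ref{lem:VertexContribution}: ``We follow $R\circ L$, and it leaves only one choice for the successor, which completes the face''). The one-point-of-change observation, the decomposition of the partial $R\circ L$ into closed cycles and open paths, and the uniqueness of the path-start $a$ that would close the path ending at $L(d)$ are exactly the right ingredients, and you correctly note that what you prove is actually slightly stronger: at most one valid choice of $d'$ closes \emph{any} face, which a fortiori gives the stated conclusion. The point that $L(d)$ cannot already lie on a completed cycle because $R(d)$ is undefined is the right thing to check, and your justification is fine.

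One small slip to fix: in the first paragraph you write that the altered transition is ``$L(d)\mapsto R(L(d))=d'$''. With the composition convention $(R\circ L)(x)=R(L(x))$ the changed value is $(R\circ L)(L(d))=R\bigl(L(L(d))\bigr)=R(d)=d'$, not $R(L(d))$, which is a different dart. The rest of your argument uses the correct convention, so this is only a typographical error, but it is worth correcting for consistency.
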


Observe that the above-described procedure generates a decision tree where the leaves are uniformly random embeddings of~$K_n$. 
}

In this proof, we use a more refined random process to generate a random rotation system, and a random edge scheme.  We then conclude by rather complicated computation.  In a similar manner to the previous description, we process vertices one at at time, and process darts one at a time at each vertex.
 
When we process $v_k$, we refer to it as \emph{step $k$}.  For each $k\in[n-2]$, we define the following terminology.  Let $\Vup$ be vertices $v_n,\ldots,v_{k+1}$ and $D_{\Vup}$ be set of their darts.  Recall that dart $d$ is \hyperref[def:unpaired]{unpaired} if $L(d)$ is undefined.
Now, we make the following random choice.
For each $i>k$, we choose uniformly at random an unpaired dart $d_i\in D_i$ and we define $L(d_i):=d$ for some unpaired dart $d\in D_k$.  We call all such newly paired darts \emph{active} for this step.  Observe that $k-1$ darts remain unpaired at vertex $v_k$ in this step.

We then study how many of various types of active darts we expect to obtain from this random choice.  Based on this, we randomly build a rotation system at $v_k$.  We do this step by step: we fix some processing order of the darts in $D_k$.  Then for each dart $d$ in this order, we randomly choose a value of $R(d)$.  This will be defined precisely as \hyperref[rpb]{Random Process B}.  Analysing the probability of adding a completed face to the embedding when assigning each value of $R(d)$ will give the proof of Theorem \ref{thm:logBound}.

\section{Log-square bound---proof of \cref{thm:logsqBound}}\label{sec:logsqBound}

We start by proving \cref{thm:logsqBound}.

\logsqbnd*

We will use a similar approach for the proof of \cref{thm:randomgraphs} \sv{in~\cite[Section 7]{ourArxiv}}\lv{ later in \cref{sec:random}}. 
Refer to \cref{fig:logsqprocess} for an example of this random process.

\bigskip

\pagebreak[3]
\noindent
{\bf Random process A.}\phantomsection\label{def:rpA}

\begin{figure}
    \centering
    \includegraphics[scale = 0.8]{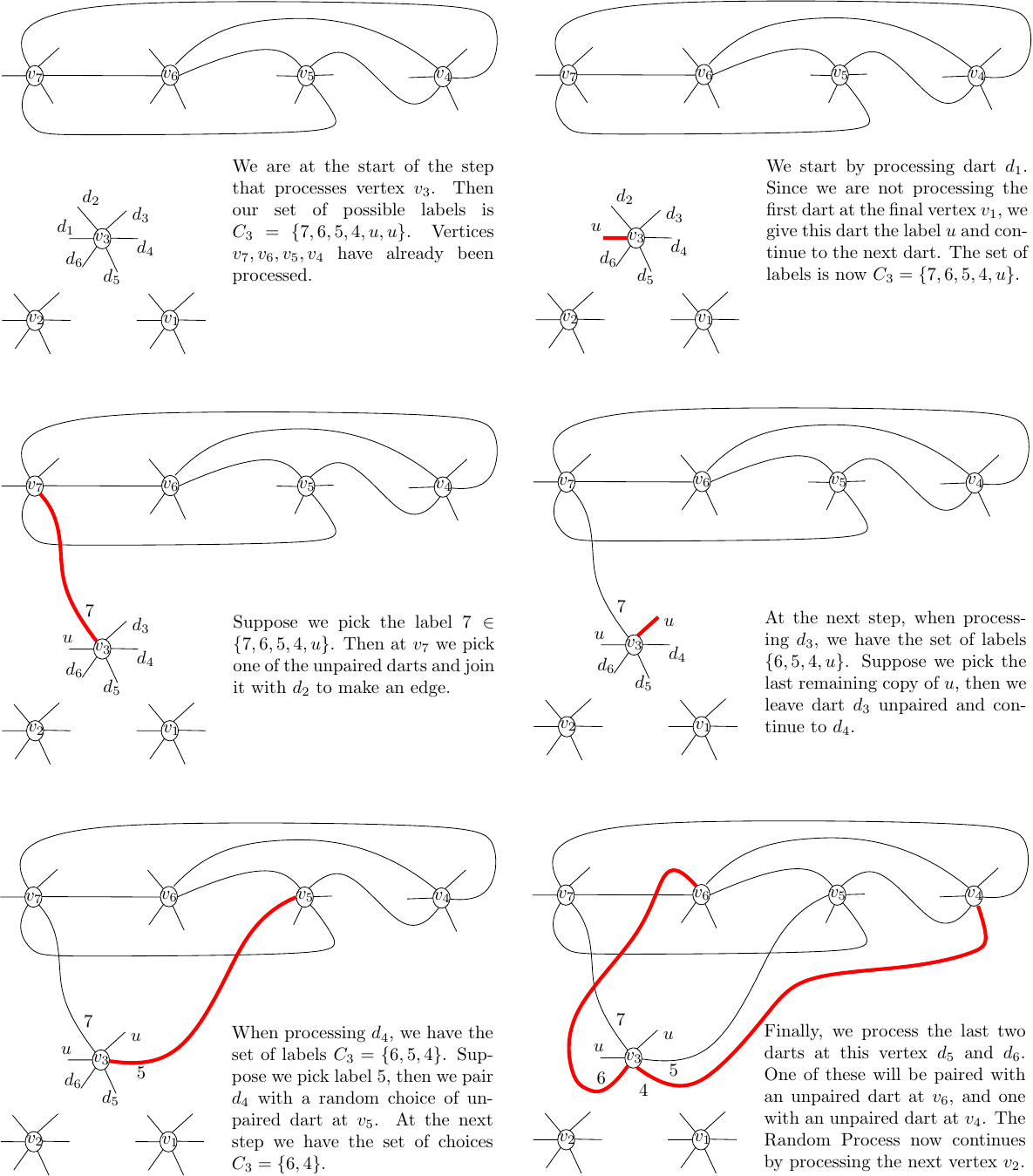}
    \caption{An example of \hyperref[def:rpA]{Random Process A}, processing vertex $v_3$.}
    \label{fig:logsqprocess}
\end{figure}

\begin{enumerate}
\item Order the vertices of the graph $v_{n}, \dots, v_1$ arbitrarily and process the vertices in this order. 
\item Start with vertices $v_n$ and $v_{n-1}$.
  They belong to one temporary face and no face has been closed so far.\label{step:ra:special1}
\item Consider vertex $v_k$ for $k\in[n-2]$.
  \label{step:ra:vertex1}
  Label the darts of~$D_k$ as $\{d_1, \dots, d_{n-1}\}$ arbitrarily. We define $R_k$ as this cyclic order, that is 
    $R_k(d_i) = d_{i+1}$ (except $R_k(d_{n-1}) = d_{1}$).
        Let $C_k \df \{n,n-1,\dots,k+1, u,u,\dots,u\}$ where there are $k-1$ copies of the symbol $u$ representing that the dart choosing $u$ remains unpaired.
        This is the multi-set of choices of where the darts may lead at the end of this step.
    \begin{enumerate}
      \item Process darts in $D_k$ in order $d_1, d_2, \dots, d_{n-1}$.  If $k>1$, give $d_1$ the label $u$, remove one copy of $u$ from $C_k$, and proceed processing $d_2$.  If $k=1$, start by processing $d_1$.
                \item Consider the dart $d_\ell$ which is next in the order.
                {\bf Random choice 1a:}\phantomsection\label{def:rpA:rca}
                   Pick a symbol from the set $C_k$ uniformly at random, then remove this choice from $C_k$. 
                   \begin{itemize}
                     \item Case 1: \emph{The choice was some  $i \geq k+1$}. {\bf Random choice 1b:}\phantomsection\label{def:rpA:rcb} Then pick an unpaired dart $d'$ uniformly at random from those at $v_i$.
                       Then add the transposition $(d',d_\ell)$ to the permutation $L$.
                     \item Case 2: \emph{The choice was some $u$.}  Then leave dart $d_\ell$ unpaired.
                   \end{itemize}
                   Continue to the next dart in the order.
    \end{enumerate}
                   Continue to the next vertex in the order. \hfill $\lrcorner$
\end{enumerate}

For each value of $k\le n-2$, let $F_k$ ($F_k=F_k(n)$) be the number of faces completed at step $k$.
    By this, we mean the facial walks that contain $v_k$ and
    no vertex $v_j$ with $j<k$.  They were completed at step $k$ and have stayed unchanged until the end of the process. 
    We need an upper bound on $\E[F_k]$. By linearity of expectation, we have that $\E[F(n)] = \sum_{k=1}^{n-2} \E[F_k(n)]$.
    
    Suppose we are processing the dart $d_\ell$ at step $k$. Recall that $d_\ell$ is contained in two partial faces: one starting at some dart $d$ and ending at $d_\ell$, and one starting at $d_\ell$ and ending at some dart $d'$.  We complete a face at this step if and only if we pair $d_\ell$ with dart $d$ or $d'$.  The dart $d'$ is an unpaired dart incident with $v_k$ with a single exception when $k=1$ and $\ell=n-1$.  So pairing $d_\ell$ with $d'$ cannot have completed a face unless we have this exception.  We have two cases:

    \textbf{Case 1:}\phantomsection\label{s3:c1} \emph{$\ell=1$, or the previously processed dart $d_{\ell-1}$ was chosen to be unpaired:} Then both darts $d$ and $d'$ are incident with vertex $v_k$, so we cannot pair with them.  Therefore, we cannot have completed a face when processing $d_\ell$.

    \textbf{Case 2:}\phantomsection\label{s3:c2} \emph{$d_{\ell-1}$ is paired:} See Figure \ref{fig:knrandomprocessstep} for an example of this analysis.  We complete a face at this step if and only if we pair $d_\ell$ with $d$, where $d$ is the dart at the start of the partial face ending at $d_\ell$. %
    The probability we choose~$d_\ell$ to lead to vertex~$v$ incident with $d$ is at most $\frac{1}{n-\ell}$ as we have already chosen at most $\ell-1$ vertices in \hyperref[def:rpA:rca]{Random choice 1a}.
    The probability that we choose dart~$d$ (and not another unused dart at~$v$) to connect with~$d_{\ell}$ is $\frac{1}{k}$ as there are $k$ unpaired darts incident vertex $v$ to choose from in \hyperref[def:rpA:rcb]{Random choice 1b}. 
    Therefore, the probability that we complete the face is at most $\frac{1}{k(n-\ell)}$.

    \textbf{Case 3:}\phantomsection\label{s3:c3} \emph{$k=1$:} When processing $d_{n-1}$, the dart $d'$ at the end of the partial face starting at $d_{n-1}$ is not at $v_1$.  Therefore, we can close two faces at this step. %

    \smallskip

    Assume now $k>1$. 
    Each dart (except for $d_1$) has probability $\frac{n-k}{n-2}$ of being paired (as $d_1$ is unpaired). 
    Thus a dart $d_{\ell}$ ($\ell \ge 3$) has the same probability $\frac{n-k}{n-2}$ of being \hyperref[s3:c2]{Case~2}. Therefore, the probability that 
    we close a face by pairing up $d_{\ell}$ is at most $\frac{n-k}{n-2} \cdot \frac{1}{k(n-\ell)}$.

    For $k=1$, all edges are connected to~$\Vup$, thus the probability of closing a face by $d_\ell$ (for $\ell \ge 2$ now) 
    is $\frac{1}{n-\ell}$.
    Moreover, the last dart $d_{n-1}$ can close two faces as described in \hyperref[s3:c3]{Case 3}. 

    Summing over all values of $\ell$ we get for $k \ge 2$ and $n\geq 4$
    \[
      \E[F_k] \le \sum_{\ell=3}^{n-1} \frac{n-k}{n-2} \cdot \frac{1}{k(n-\ell)} = \frac{n-k}{k(n-2)} \cdot H_{n-3}.
    \]
    Also, 
    \[
      \E[F_1] \le 1 + \sum_{\ell=2}^{n-1} \frac{1}{n-\ell} = 1 + H_{n-2}. 
    \]

    Summing over all steps $k$ and assuming $n\ge 4$ (apart from the last line where we assume $n\ge 10$), we obtain:

    \begin{align}
      \E[F] &=     \E[F_1] + \sum_{k=2}^{n-2} \E[F_k] \nonumber\\ 
            &\leq  1 + H_{n-2} +  \sum_{k=2}^{n-2} \frac{n-k}{k(n-2)}\, H_{n-3} \nonumber\\
            &=     1 + H_{n-2} +  \frac{n}{n-2} H_{n-3} ( H_{n-2} - 1) - \frac{n-3}{n-2} H_{n-3} \label{eq:logsqForComputation} \\
            &<     H_{n-3}H_{n-2}.  &(\text{for } n \geq 10) \qedhere\nonumber
    \end{align}

\begin{figure}
        \centering
        \includegraphics[scale=0.75]{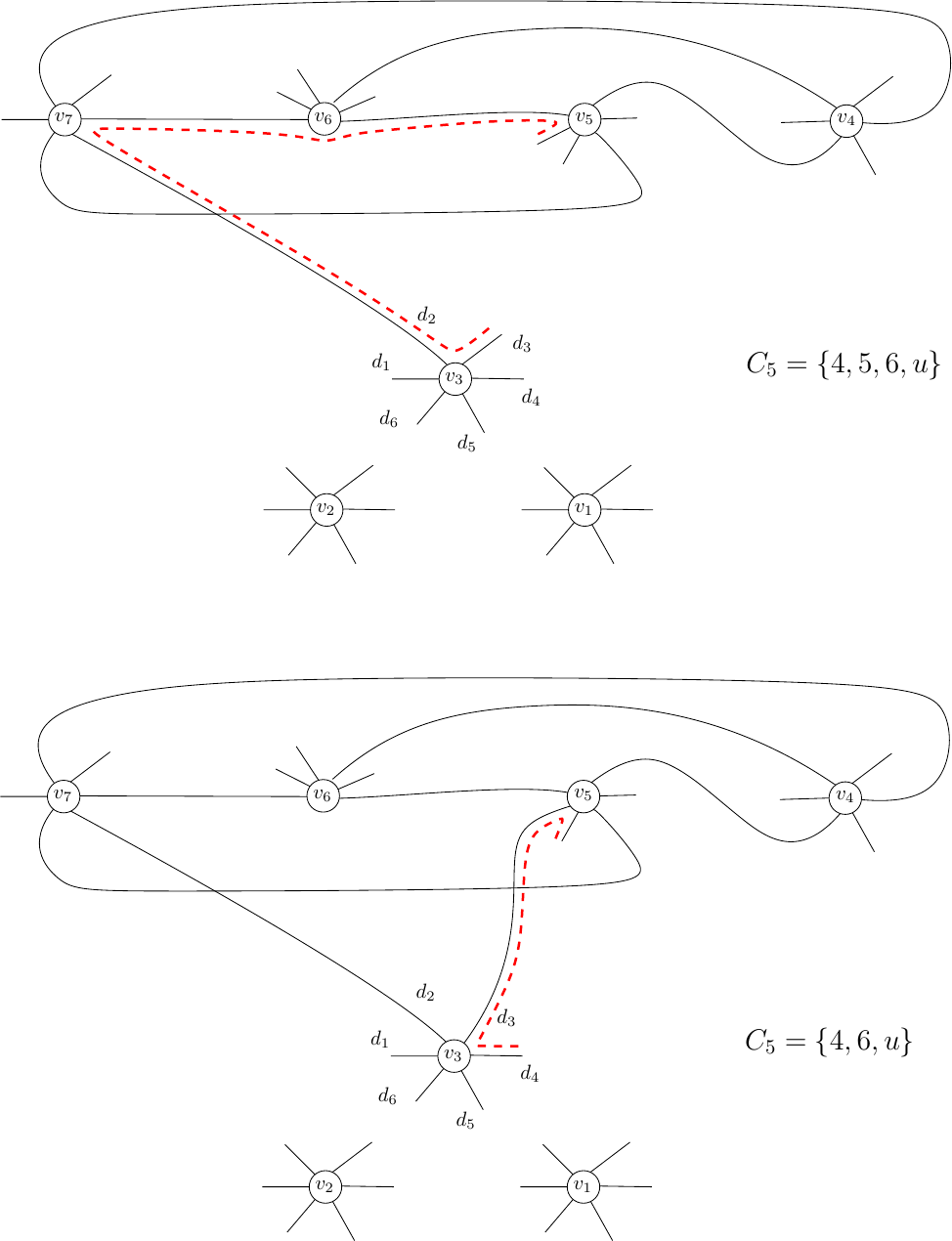}
        \caption{The upper diagram shows the step of Random Process $A$ where we are processing dart $d_3$ at vertex $v_3$.  The partial facial walk is traced in dotted red line, showing the only dart for which pairing with makes a completed face.  At the next step, the only dart for which pairing with makes a completed face is at vertex $v_5$.  However, we have already added the edge from $v_3$ to $v_5$, so $3$ is not a valid choice of a label at this step.  Therefore, we cannot add a completed face at this step.}
        \label{fig:knrandomprocessstep}
    \end{figure}

    \section{Logarithmic bound---proof of \cref{thm:logBound}}\label{sec:logBound}

\logbnd*

    We first introduce more notation that will be needed in the proof.
We look more carefully at step $k$.  At this step the walks in $R\circ L$ can be split into two categories building on notation defined in \cref{sub:prelim}:
    \begin{enumerate}
    \item \emph{Completed faces:} cycles of $R \circ L$. Those are closed walks that corresponds to 0-open faces which will not change any more, and
    \item \emph{Candidate walks:} those are partial faces that originates at an unpaired dart $d_s$ and lead to an unpaired dart $d_e$ (possibly $d_s=d_e$). 
    \end{enumerate}
    For each vertex in $\Vup$, we will pick an \emph{active dart} randomly from the set of all unpaired darts incident with this vertex.
Observe that if a partial face starts with a dart~$d_s$ and ends with~$d_e$, then it can complete a face in step~$k$ only if 
both $d_s$ and~$d_e$ become active.
We call such walks \emph{active} in step $k$.
We further partition the active walks into 
\begin{enumerate}
  \item[(1)] Those for which $d_s=d_e$. Observe that such are necessarily 1-open faces and so we refer to them as \emph{1-open active faces}, and 
  \item[(2)] All other active walks (i.e., $d_s\neq d_e$), which we refer to as \emph{potential faces}.
  \end{enumerate}
An active dart $d\in D_k$ is called \emph{1-open} if $L(d)$ is the dart incident with some 1-open face.
An active dart $d\in D_k$ is called \emph{potential} if $L(d)$ is incident with some potential face.
We will give more intuition on our terminology.
We will show that under certain circumstances, only potential faces may complete a face.
Therefore, we call unpaired darts in $D_k$ together with darts that do not take part in any active walk \emph{non-contributing}.
Let $\PF_k$ be a random variable representing the number of potential faces and $O_k$ be a random variable representing the number of 1-open active faces
in step~$k$, after active darts were chosen.  Let $F_k$ denote the total number of completed faces added during step $k$.

We now describe our random procedure in detail.
We refer to Figure \ref{fig:rpb} for an example of this random process.

\bigskip

\noindent
{\bf Random process B.}\phantomsection\label{rpb}

\begin{figure}
    \centering
    \includegraphics[scale = 0.8]{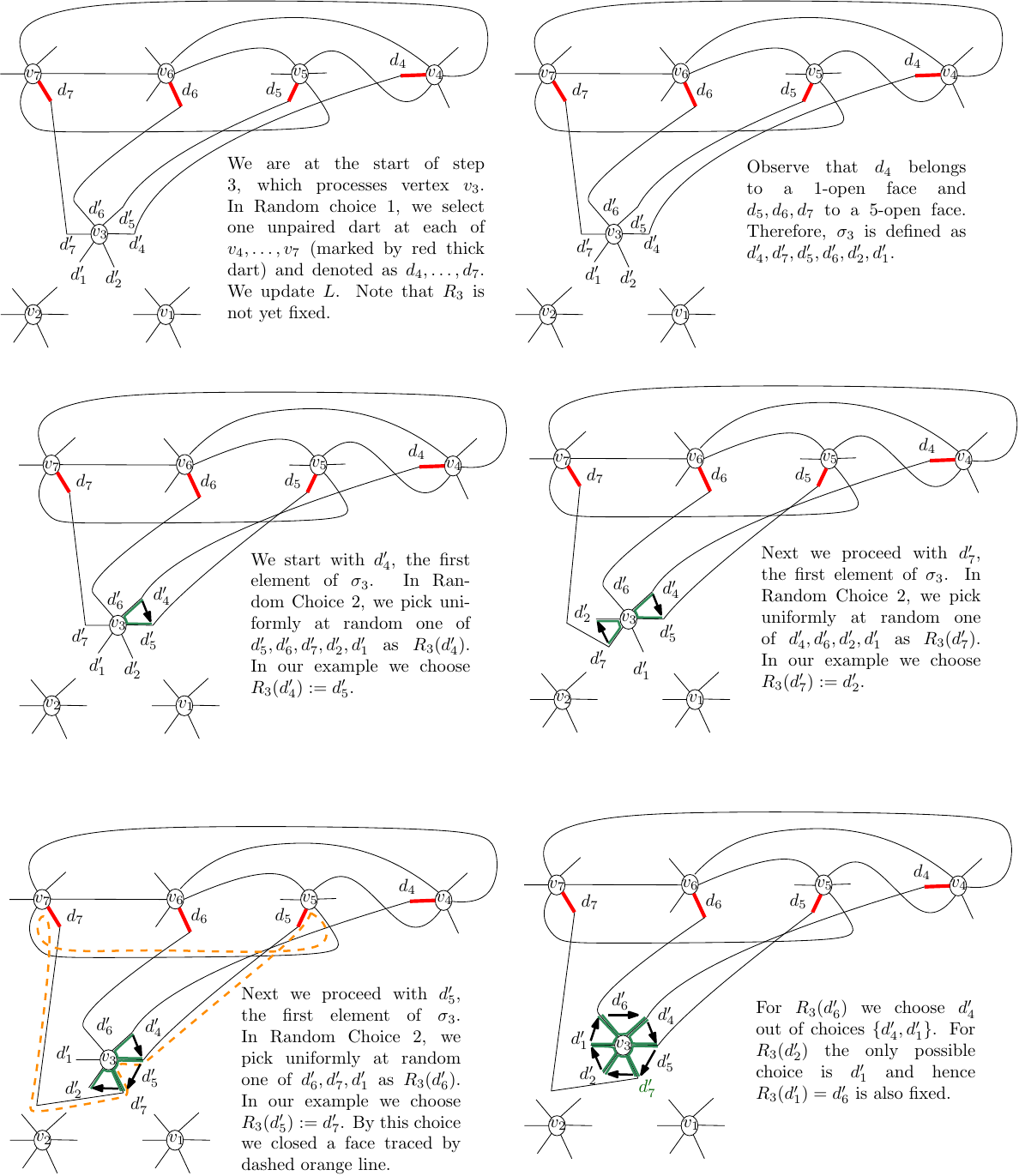}
    \caption{An example of \hyperref[rpb]{Random Process B}, processing vertex $v_3$ to obtain $R_3$.  At the end of this step, the darts $d_1', d_2'$ remain unpaired.  It is not decided which one will go to $v_1$ and which one will go to $v_2$.}
    \label{fig:rpb}
\end{figure}

\begin{enumerate}
\item Label the vertices arbitrarily as $v_n,\dots,v_1$ and process them in that order. 
\item Start with vertex $v_n$, and fix a uniform at random full cycle $R_n$.  This vertex is incident with $n-1$ unpaired darts.
\item Consider vertex $v_k$ for $k\in[n-1]$, starting with $n-1$.\label{step:vertex2}
    \begin{enumerate}
      \item\label{step:3a} \textbf{Random choice 1:}\phantomsection\label{rpb:rc1} For each vertex in $\Vup$ we choose uniformly at random one out of $k$ unpaired darts to lead to~$v_k$ and update $L$ appropriately.  The chosen darts are said to be the \emph{active darts} at step $k$.
      \item We treat $D_k$ as an unordered set, and build a local rotation $R_k$ by processing the darts in a special order $\sigma_k$ given by the type of walk the dart describes.  Each time we fix $R_k(d)$ for the processed dart $d$.
        We define $\sigma_k$ as follows:
              \begin{enumerate}
                \item 
                  First, process 1-open darts in arbitrary order.
                \item 
                  Next, potential darts follow in arbitrary order.
                \item 
                  Last, non-contributing darts are processed, again in arbitrary order.
              \end{enumerate}            
            \item \textbf{Random choice 2:}\phantomsection\label{rpb:rc2} For each $d\in D_k$ in order $\sigma_k$ we choose uniformly at random one dart $d'$ among all possible options (those
                   that do not violate the property that $R_k$ will define a single cycle eventually) and we set $R_k(d)\df d'$. \hfill $\lrcorner$
    \end{enumerate}
\end{enumerate}

Now, we define a function $q$, which will form an upper bound for the contribution of vertex $v_k$ to the expected number of faces. The function is defined as follows.
(Note that $H_0 = 0$.) 

\begin{Definition}\phantomsection\label{def:q}
If\/ $1\le t < n$ and\/ $0\le\xi<n-1-t$, then
 \begin{align}
   q(\xi,t)\df \sum_{i=1}^{t} \frac{1}{n-\xi-i-1} = H_{n-\xi-2} - H_{n-\xi-t-2}. \label{eq:defQ}
\end{align}
If $\xi+t=n-1$ then 
 \begin{align}
   q(\xi,t)\df \sum_{i=1}^{t-1} \frac{1}{n-\xi-i-1} + 1 = H_{n-\xi-2} + 1. \label{eq:defQ-border}
\end{align}
\end{Definition}

It is easy to observe the following fact about the function $q$:
\begin{observation}\label{obs:q}
  Let $a\ge 1$, $1\le t + a < n$, and $0\le\xi-a<n-1-t-a$.
  Then 
  \[
    q(\xi,t)\le q(\xi-a,t+a).
    \]
\end{observation}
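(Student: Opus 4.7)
The plan is to observe that the transformation $(\xi,t)\mapsto(\xi-a,t+a)$ preserves the sum $\xi+t$, so $q(\xi,t)$ and $q(\xi-a,t+a)$ are always defined by the \emph{same} one of the two cases in \cref{def:q}. Specifically, if $\xi+t<n-1$ then $(\xi-a)+(t+a)=\xi+t<n-1$ and both values are given by \eqref{eq:defQ}; if $\xi+t=n-1$ then the same equality $(\xi-a)+(t+a)=n-1$ puts both values under \eqref{eq:defQ-border}. (The domain hypotheses $1\le t+a<n$ and $0\le\xi-a<n-1-t-a$ guarantee that $q(\xi-a,t+a)$ is well defined and fits into whichever of the two cases applies.)

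In the first case, I would use the harmonic-number form of \eqref{eq:defQ}. We have
\[
q(\xi-a,t+a)-q(\xi,t)=\bigl(H_{n-\xi+a-2}-H_{n-\xi-t-2}\bigr)-\bigl(H_{n-\xi-2}-H_{n-\xi-t-2}\bigr)=H_{n-\xi+a-2}-H_{n-\xi-2},
\]
which equals $\sum_{j=1}^{a}\tfrac{1}{n-\xi-2+j}\ge 0$, since $a\ge1$ and all denominators are positive by the domain conditions. In the second case, both values are given by $H_{n-\xi-2}+1$ and $H_{n-\xi+a-2}+1$ respectively, and the same telescoping sum bounds the difference from below by $0$.

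Thus in both cases $q(\xi-a,t+a)-q(\xi,t)\ge 0$, which is the desired inequality. I do not anticipate a serious obstacle: the only thing to be careful about is the domain bookkeeping, namely confirming that the pair $(\xi-a,t+a)$ indeed falls into the same branch of \cref{def:q} as $(\xi,t)$ and that the denominators appearing in the telescoping sum are all positive; both follow immediately from the stated hypotheses $a\ge 1$, $1\le t+a<n$, and $0\le\xi-a<n-1-t-a$.
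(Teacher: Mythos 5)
Your proof is correct, and the argument is essentially the canonical one. The paper does not give a proof of this observation (it is introduced with the phrase ``It is easy to observe the following fact''), so there is no paper proof to compare against; your telescoping computation $q(\xi-a,t+a)-q(\xi,t)=H_{n-\xi+a-2}-H_{n-\xi-2}=\sum_{j=1}^{a}\tfrac{1}{n-\xi-2+j}\ge 0$ is exactly the expected verification, and the domain bookkeeping (positivity of the denominators, membership of both pairs in the same branch of \cref{def:q}) is handled correctly. One small remark: the hypothesis $0\le\xi-a<n-1-t-a$ already forces $\xi+t<n-1$, so the border case $\xi+t=n-1$ from \eqref{eq:defQ-border} can never occur under the stated assumptions; your treatment of that case is harmless but superfluous.
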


Now, we state the crucial lemma that is the starting point of the upper bound computation.

\begin{lemma}\label{lem:VertexContribution}
Given $\PF_k=t$ and $O_k=\xi$, the average number of faces completed at vertex $v_k$ is at most $q(\xi,t)$.  In other words, $\E[F_k \mid \PF_k=t, O_k=\xi] \leq q(\xi,t)$.
\end{lemma}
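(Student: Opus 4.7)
The plan is to process darts at $v_k$ in the prescribed order $\sigma_k$ and charge face completions to specific dart processings, then bound the probability of closure at each step using Observation~\ref{o:at-most-one-face}.

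First I would characterize which darts can ever trigger a face completion. Each completed face at step $k$ corresponds to a closed cycle in $R\circ L$ that uses at least one newly defined value from Random choice 2. After Random choice 1, the walk structure gives a matching $\alpha$ on active ports: it acts as the identity on the $\xi$ 1-open ports and as a transposition on each pair of potential ports. Completed faces at step $k$ correspond exactly to cycles of the composite $\tau=R_k\circ\alpha$ restricted to active ports.

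The key structural observation is that no $\tau$-cycle can consist entirely of 1-open ports: in that case $\tau(a)=R_k(a)$ for every $a$ in the cycle, so $R_k$ would have a sub-cycle of length less than $n-1$, contradicting the fact that $R_k$ is a single $(n-1)$-cycle. Hence every completed face must involve at least one potential port, and its $\alpha$-image is again a potential port. Because $\sigma_k$ processes 1-open darts first (positions $1,\dots,\xi$), then potential darts (positions $\xi+1,\dots,\xi+2t$), every face must close at a step $i$ inside the potential range. Non-contributing darts cannot trigger a closure either: if such a dart is unpaired, setting $R_k$ at it does not change $\pi$ anywhere, and if it is a paired non-active port, the walk through its partner still has an unpaired endpoint in $v_{k-1},\dots,v_1$ so the $\pi$-chain cannot close.

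Next, at step $i$ the partial $R_k$ has $i-1$ assignments, leaving $n-i$ candidate values, exactly one of which is forbidden by the single-cycle constraint (the head of the chain currently terminating at $d_i$). So there are $n-i-1$ equally likely valid choices, and by Observation~\ref{o:at-most-one-face} at most one of them completes a face; hence the conditional probability of closing a face at step $i$ is at most $\tfrac{1}{n-i-1}$.

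Summing over potential positions naively gives $\sum_{i=\xi+1}^{\xi+2t}\tfrac{1}{n-i-1}$, which is roughly twice $q(\xi,t)$. The main obstacle, and the subtlest part of the argument, is tightening this to $q(\xi,t)=\sum_{i=1}^{t}\tfrac{1}{n-\xi-i-1}$. The plan is to charge each closed face to exactly one specific dart via a careful pairing: for every potential walk with ports $(a,b)$, the two candidate closures ``$R_k(a)=b$'' and ``$R_k(b)=a$'' would create a $2$-cycle in $R_k$ and are therefore mutually exclusive inside an $(n-1)$-cycle, so each potential walk contributes at most one face that must be charged once. Ordering the potential darts so that each walk's charged port is among the first $t$ positions of the potential phase, and arguing that the events at the remaining $t$ potential positions are already determined to be closure-free by the earlier assignments, collapses the sum to $q(\xi,t)$, yielding the desired bound.
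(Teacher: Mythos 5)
Your overall framework is right — processing the darts at $v_k$ in the stage order $\sigma_k$, invoking the ``at most one closing choice per step'' observation, and bounding the per-step probability by $\tfrac{1}{n-i-1}$ — and you correctly pinpoint that the crux is tightening the naive $2t$-position sum down to $t$. Your argument that a $\tau$-cycle cannot consist entirely of $1$-open darts (it would force a proper sub-cycle of the $(n-1)$-cycle $R_k$) is exactly what the paper uses. However, there is a structural error that breaks your resolution of the key step.

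The walk-induced map on active darts is not, in general, a matching. The candidate walks form a permutation of the unpaired darts of $\Vup$, with one cycle per temporary face, and this cycle can have any length. Restricted to the active darts it becomes a partial injection $\psi$, which need not be an involution: a potential walk with ends $(a,b)$ (i.e.\ $\psi(a)=b$) does not imply the existence of a potential walk with ends $(b,a)$. Indeed, a $3$-open temporary face whose three unpaired darts all become active produces three potential walks forming a $3$-cycle in $\psi$ — no transpositions anywhere. Consequently your ``two candidate closures $R_k(a)=b$ and $R_k(b)=a$'' argument has nothing to pair with: $R_k(b)=a$ closes the walk $\psi(a)=b$, while $R_k(a)=b$ would close a \emph{different} walk ($\psi(b)=a$), which in general is not a potential face. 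The claimed mutual exclusivity of a $2$-cycle in $R_k$ is true but irrelevant, and the final step — ``the events at the remaining $t$ positions are already determined to be closure-free by the earlier assignments'' — has no justification once the matching picture is gone.

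The correct way to get from $2t$ down to $t$ is structural, not a charging scheme: a face closes at step $k$ precisely when a cycle of $\tau = R_k\circ\psi$ is completed, and completing a cycle requires determining $R_k$ at a dart in the \emph{range} of $\psi$. Since the walk permutation is a bijection, distinct potential walks have distinct ending darts, so $\mathrm{range}(\psi)$ contains exactly $t$ non-fixed darts. Those $t$ darts are the only potential darts at which a closure event can occur; a potential dart lying in $\mathrm{dom}(\psi)\setminus\mathrm{range}(\psi)$ is structurally incapable of triggering a closure and can be processed alongside the non-contributing darts. Placing the $t$ range-darts at positions $\xi+1,\dots,\xi+t$ in $\sigma_k$ gives the sum $\sum_{i=\xi+1}^{\xi+t}\tfrac{1}{n-i-1}=q(\xi,t)$ directly, without any pairing argument and without the involution assumption.
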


Note that $O_k+\PF_k$ is never larger than $n-1$ and therefore the value $q(\xi,t)$ is well-defined.
Observe that $O_k+\PF_k=n-1$ if and only if $k=1$ as there are exactly $n-k$ edges between $v_k$ and $V^\uparrow$.

\begin{proof}[Proof of \cref{lem:VertexContribution}]
  Recall that first, we determine which unpaired darts of $\Vup$ lead to $v_k$ in \hyperref[rpb:rc1]{Random choice 1}.
  This corresponds to determining $L$ for $n-k$ darts incident with $v_k$.
  Then we create an auxiliary order $\sigma_k$ of darts in $D_k$, and process the edges according to $\sigma_k$.
When we process dart $d$, we determine what will be $R_k(d)$.
As mentioned above, in order to do that, we will be constructing the cyclic permutation $R_k: D_k\to D_k$ step-by-step.
We start with $R_k$ being undefined.
We define a \emph{forefather} of a dart $d\in D_k$ which is the furthest possible predecesor of $d$ in partially constructed $R_k$.
If no predecessor of $d$ exists, then $d$ is its own forefather.

We label the darts of~$D_k$ in order $\sigma_k$ as $d_1$, \dots, $d_{n-1}$. 
Now, suppose we are about to process $d_i$ where $i\neq n-1$.
We pick uniformly at random the next dart in rotation $R_{k}$, i.e., we choose $R_k(d_i)$.
We are allowed to use any dart which does not have a predecessor (this rules out $i-1$ choices) as well as the forefather of $d_i$ is disallowed (as such a choice would close the cycle $R_k$ prematurely).
Observe that as there are $n-1$ darts around $v_k$, for the $i$-th dart we have $n-1-(i-1)-1$ valid choices.
In case $i=n-1$, we do not have any choice and $R_k(d_{n-1})$ must be equal to the forefather of~$d_{n-1}$. 
Observe that this process produces a uniformly random embedding.

We continue by calculating the probability that a face is formed by fixing some $R_k(d_i)$ for $i < n-1$.
If $d_i$ is of the first category, choosing its successor never completes a new face as, so far, we only determined $R_k$ for 1-open darts.
If $d_i$ is of the second category, we argue we can complete at most one face by determining $R_k(d_i)$. 
We follow $R\circ L$, and it leaves only one choice for the successor, which completes the face.
Therefore, for each $d_i$ of the second category the probability that we complete a face is at most $\frac{1}{n-i-1}$.
Here, $i$ goes from $\xi+1$ to $\xi + t$, where $\xi$ is the number of darts of the first category at $v_k$ and $t$ is the number of darts of the second category.
It is easy to see that for any $d_i$ in the last category, there is no choice $R_k(d_i)$ which completes a face.
Therefore, if $k>1$, then the third category is not empty and $R_k(d_{n-1})$ never completes a face.
We conclude that we arrive at equation~(\ref{eq:defQ}).
If $k=1$ fixing $R_1(d_{n-1})$ might complete an additional face and this accounts for the additional $+1$ in equation~(\ref{eq:defQ-border}).
\end{proof}

We define one more random variable.
Let $T_{n-k}$ represent the number of temporary faces in $G[\Vup]$ in step $k$ (before vertex $v_k$ is added).
Note that $E[T_n]$ is, in other words, an average number of faces of $K_n$.
Hence, the following lemma is the first step in the proof of the main theorem.
The rest of the proof will provide an involved analysis of the right-hand side of Inequality~(\ref{eq:target}).

\begin{lemma}\label{lem:estimate} 
  Let $n\ge 3$ and let $F, \PF_k, O_k$ be random variables as defined earlier.  Then we have
\begin{align}
  \E[F]= \E[T_n] \le \sum_{k=1}^{n-2}  \E[q(O_k,\PF_k)] = \sum_{k=1}^{n-2} \sum_{i=1}^{n-k} \sum_{j=0}^{n-k-i} q(j,i) \cdot \Pr[O_k=j \wedge \PF_k=i]. \label{eq:target}
\end{align}
\end{lemma}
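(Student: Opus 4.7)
The plan is to establish $\E[F]=\E[T_n]$, decompose $T_n$ into per-step face completions, and invoke \cref{lem:VertexContribution}. First, after step $k=1$ of Random process~B every dart of $K_n$ is paired, so the number of temporary faces $T_n$ when $\Vup=V$ coincides with the number of actual faces $F$ of the finished embedding; hence $\E[F]=\E[T_n]$.

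The key combinatorial step is the decomposition $T_n=\sum_{k=1}^{n-2} C_k$, where $C_k$ denotes the number of facial walks of the final embedding whose minimum-indexed incident vertex is~$v_k$. Because vertices are processed in decreasing order of index, a face of the finished embedding first arises as a closed cycle of $R\circ L$ exactly when its lowest-indexed boundary vertex is processed. Moreover, in $K_n$ with $n\geq 3$ every facial walk has length at least three (there are no multi-edges) and the lone edge $v_n v_{n-1}$ can participate in at most two facial incidences, so no face is supported only on $\{v_n,v_{n-1}\}$; consequently the minimum index of every face lies in $\{1,\dots,n-2\}$. Each face is thus counted by exactly one $C_k$, and by linearity $\E[T_n]=\sum_{k=1}^{n-2}\E[C_k]$.

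Conditioning on $O_k$ and $PF_k$, \cref{lem:VertexContribution} yields $\E[C_k\mid O_k, PF_k]\leq q(O_k, PF_k)$; the tower property and linearity over~$k$ then give
\[
\E[F]=\E[T_n]=\sum_{k=1}^{n-2}\E[C_k]\leq\sum_{k=1}^{n-2}\E[q(O_k, PF_k)],
\]
which is the first inequality of the lemma. The final equality is just the expansion $\E[q(O_k,PF_k)]=\sum_{i,j} q(j,i)\Pr[O_k=j\wedge PF_k=i]$ of the expectation according to the joint distribution of $(O_k,PF_k)$. The stated summation range $i\in\{1,\dots,n-k\}$, $j\in\{0,\dots,n-k-i\}$ captures every nonzero term, because the support of $(O_k,PF_k)$ is contained in $\{O_k+2\cdot PF_k\leq n-k\}$ (each $1$-open active face consumes one of the $n-k$ active darts produced by Random choice~1 and each potential face consumes two); in particular $j+i\leq n-k$ on the support, and terms with $i=0$ contribute nothing since $q(\cdot,0)=0$ via the empty-sum convention.

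The only delicate ingredient is the decomposition $T_n=\sum_k C_k$, which relies on the combinatorial observation that every face of $K_n$ has minimum vertex index in $\{1,\dots,n-2\}$; once this bookkeeping is in hand, the remainder of the lemma is a direct consequence of \cref{lem:VertexContribution} and linearity of expectation.
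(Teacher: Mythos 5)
Your proof follows essentially the same route as the paper's: decompose the total face count by the vertex that "closes" each face, apply \cref{lem:VertexContribution} conditionally, and invoke linearity of expectation and the tower property. The paper's own proof is terse ("the equalities are clear") and you usefully spell out the details — that $T_n$ equals $F$, that every face's minimum-indexed vertex lies in $\{1,\dots,n-2\}$ (since a face needs at least three edges in a simple graph), and that the summation ranges in the triple sum cover the support.

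One small slip in the supporting discussion: your claim that the support of $(O_k,PF_k)$ lies in $\{O_k+2PF_k\le n-k\}$, on the grounds that "each potential face consumes two" active darts, is not quite right. Distinct potential faces can share an active dart: e.g., if a temporary face has unpaired darts $d_1,d_2,d_3$ in cyclic order and all three become active, then $d_1\to d_2$, $d_2\to d_3$, $d_3\to d_1$ are three potential walks using only three active darts, violating $O_k+2PF_k\le n-k$ whenever few other darts are active. The bound that actually holds — and is all you need for the summation ranges — is $O_k+PF_k\le n-k$: each active walk is indexed by its (active) starting dart, and each active dart is the start of exactly one candidate walk, so the number of active walks never exceeds the number of active darts. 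With that correction the verification of the summation range is fine, and the rest of your argument is correct.
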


\begin{proof}
The equalities in (\ref{eq:target}) are clear, so we will only argue about the inequality.
We execute \hyperref[rpb]{Random process B} as defined above. 
For the first two vertices $v_n$ and $v_{n-1}$ in the order, all choices are isomorphic. %
We process each other vertex as described in part~\ref{step:vertex2} of the process description.
Hence, the contribution of a single vertex is upper-bounded by \cref{lem:VertexContribution}
\end{proof}

\newcommand{\Oconst}{\nu}
\newcommand{\oconst}{\overline\nu}
\let\nubar\oconst %
Let $1/2 < \nu < 1$ be a constant and $\nubar \df 1 - \nu$.  We will fix this value later on for different ranges of $n$ in order to optimise our bound.  We split the above triple sum (\cref{eq:target} in \cref{lem:estimate}) into several parts: 
\begin{itemize}
  \item $S_1$ will contain the terms where $k=1$.
  \item $S_2$ will contain the terms where $j < \nubar n$ and $i < \frac{n-k}{k}$.
  \item $S_3$ will contain the terms where $j < \nubar n$ and $i \ge \frac{n-k}{k}$.
  \item $S_4$ will contain the terms where $j \ge \nubar n$.
\end{itemize}

Recall that we use $\gamma$ to denote the Euler-Mascheroni constant, as defined in \cref{thm:Hnconv}.
We now define $S_1$, $S_2$, $S_3$, and $S_4$.
We will also state the bounds which we derive for each portion of the sum in the forthcoming subsections.
\begin{align}
  S_1 &\df \sum_{i=1}^{n-1} \sum_{j=0}^{n-1-i} q(j,i) \cdot \Pr[O_1=j \wedge \PF_1=i] \le H_{n-2} + 1 \le \ln(n) + \gamma + 1. \label{eq:fixed}
\end{align}
For the rest, we first take the terms for which $O_k < \oconst{}n$.
Let $b= b(n,k,i) \df\min(n-k-i,\lceil\oconst{}n\rceil-1)$.
When writing down the terms for $S_2$, we used the fact that these terms do not occur if $\tfrac{n-k}{k} \le 1$. Thus we have the summation range for $k$ only between $2$ and $n/2$.
\begin{align}
  S_2 \df& \sum_{k=2}^{n/2} \sum_{i=1}^{\lceil\frac{n-k}{k}\rceil-1}  \sum_{j=0}^{b} q(j,i) \cdot \Pr[O_k=j \wedge \PF_k=i] \label{eq:case1-better}\\
  \le&\  \frac{1}{\Oconst{}}\ln(n) +\ln{\left(\frac{\Oconst{}n-3/2}{\Oconst{}n-1/2-\frac{n}{2}}\right)}  +\frac{1}{\Oconst}\left( \ln(\Oconst/2) - \ln(5\Oconst/2-1) \right)  \, . \nonumber\\
  S_3 \df& \sum_{k=2}^{n-2} \sum_{i=\lceil\frac{n-k}{k}\rceil}^{n-k} \sum_{j=0}^{b} q(j,i) \cdot \Pr[O_k=j \wedge \PF_k=i] \label{eq:defS3}\\
    &\le
\ln(2 \Oconst n)   \frac{\frac{\pi^2}{6}-1}{\Oconst^2}  \left(1 + \frac{4}{\Oconst n - 2}\right)  +
 1.67\ln n +5
 +  \frac{2n}{\Oconst n - 5/2}. \nonumber
\end{align}
In case $n\ge e^{e^{16}}$ and $\Oconst\ge\tfrac{999}{1000}$, we have a stronger estimate:
\begin{align}
  S_3\le 1.6474 \ln n-9. \label{eq:defS3:ass}
\end{align}

Finally, we take the remaining case where $O_k\ge \oconst{}n$. 
The corresponding inequality involves an auxiliary (real) parameter $\mu\in[1,3]$, and an integer $\aleph_m\in\mathbb{Z}$ such that
  $\E[F(m)]\le 5\ln(m)+\aleph_m$ for all $2\le m<n$.
  We denote $\aleph^a_b\df \max_{b<i<a} \aleph_i$ for $0<b<a$.
\begin{align}
  S_4 \df& \ \sum_{k=2}^{n-2} \sum_{i=1}^{n-k} \sum_{j=\lceil\oconst n\rceil}^{n-k-i} q(j,i) \cdot \Pr[O_k=j \wedge \PF_k=i] \label{eq:defS4}\\
    <  & ~~ 
    \Oconst n \ln(\Oconst n) e^{\frac{-n\oconst^2}{2}}   
     +\frac{\nu\ln(\Oconst n)\left(5\ln n+\aleph_{\lceil\oconst n\rceil}^{n-\left\lceil \frac{2}{\oconst}ln^\mu(n)\right\rceil}\right)}{\ln^{\mu}(n)} \, + \nonumber\\
   & ~~ \frac{2\ln^\mu(n) \ln(\Oconst n)}{\oconst^2 n} \cdot \left(5\ln n+\aleph_{n-\left\lceil \frac{2}{\oconst}ln^\mu(n)\right\rceil +1}^{n-2}\right).
  \label{eq:caseOk}
\end{align}

\cref{lem:estimate} together with the above analysis reformulates \cref{thm:logBound} as the following inductive theorem.
The base case of the induction is computed using the computer analysis formulated as \cref{prop:smallValues}.
Note that it is sufficient to assume $n\ge 243$ for the next theorem as the smaller values follow from \cref{thm:logsqBound} via computer-evaluation which is described later in \sv{\cite[Section 5]{ourArxiv}}\lv{\cref{sec:smallValues}}.
Observe that if we do not aim for the best multiplicative constant we can use our $\ln^2 n$ upper bound (\Cref{thm:logsqBound}) in the place of the inductive argument. 
However, it would not be sufficient to use there, for example, the previously
known linear bound.

\begin{theorem} \label{thm:Completegraphslargevalues}
  Let $n\ge 243$ be an integer.
  For $3 \leq m < n$, suppose that $\E[F(m)] \leq 5\ln(m) + \aleph_m$.
  Then we have
    \[
      \E[F(n)] \le S_1+S_2+S_3+S_4 
      \]
      where $S_1,S_2,S_3,S_4$ are defined above in Equations (\ref{eq:fixed}), (\ref{eq:case1-better}), (\ref{eq:defS3}), and (\ref{eq:defS4}).
\end{theorem}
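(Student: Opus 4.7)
The plan is to begin from \cref{lem:estimate}, which already furnishes
\[
  \E[F(n)] \le \sum_{k=1}^{n-2}\sum_{i=1}^{n-k}\sum_{j=0}^{n-k-i} q(j,i)\cdot \Pr[O_k=j\wedge PF_k=i],
\]
and to partition the index set according to the four cases listed immediately before the theorem: $k=1$; $k\ge 2$ with $j<\nubar n$ and $i<(n-k)/k$; $k\ge 2$ with $j<\nubar n$ and $i\ge (n-k)/k$; and $k\ge 2$ with $j\ge \nubar n$. Because these regions are pairwise disjoint and exhaust the range of summation, summing $q(j,i)\Pr[\cdot]$ over them gives exactly $S_1+S_2+S_3+S_4$. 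The inductive conclusion then reduces to verifying the four upper bounds stated inside Equations~(\ref{eq:fixed})--(\ref{eq:caseOk}).

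The first three bounds are essentially routine, though tedious. For $S_1$, \hyperref[rpb:rc1]{Random choice 1} ensures that every dart of $v_1$ is paired, so $O_1+PF_1=n-1$ and we fall into the boundary case of \cref{def:q}; the value $q(O_1,PF_1)=H_{n-O_1-2}+1$ is maximized at $O_1=0$, giving $S_1\le H_{n-2}+1\le \ln n+\gamma+1$ by \cref{thm:Hnconv}. For $S_2$, the range $i<(n-k)/k$ is short enough that one can plug in $q(j,i)=H_{n-j-2}-H_{n-j-i-2}$, bound the joint probability trivially by the marginal in $j$, swap the outer summations on $k$ and $i$, and recognize a telescoping harmonic-number sum whose leading behaviour matches $\tfrac{1}{\Oconst}\ln n$. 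For $S_3$, the complementary range $i\ge (n-k)/k$ is handled using $q(j,i)\le q(0,n-k)=H_{n-2}-H_{k-2}$ together with a tail estimate on $PF_k$: since every potential face at step $k$ is obtained from a temporary face of $G[\Vup]$, one has $\E[PF_k]\le \E[T_{n-k}]$, and splitting at the threshold $\ln^\mu(n)$ yields the $\pi^2/6$-type contribution from $\sum_{i\ge \ln^\mu n} 1/i^2$ on top of the bulk term $\ln n - 2\ln\ln n + \mathrm{const}$.

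The principal obstacle is $S_4$, where the inductive hypothesis must be brought into play. Here the event $\{O_k\ge \oconst n\}$ is genuinely atypical, and the proof has to trade off two effects simultaneously. On the one hand, a Chernoff-type large-deviation estimate shows that $\Pr[O_k\ge \oconst n]$ is exponentially small, producing the term $\Oconst n\ln n\,e^{-n\oconst^2/2}$; on the other hand, the conditional average number of faces given $O_k\ge \oconst n$ is controlled by applying the inductive hypothesis $\E[F(m)]\le 5\ln m+\aleph_m$ to the submap on $\Vup$ (with $m=n-k$ vertices), since $O_k$ is dominated by the number of temporary faces already present in $G[\Vup]$. Combining these estimates with the remaining factor $q(j,i)\approx \ln(\Oconst n)$ produces the three summands in Equation~(\ref{eq:caseOk}). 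The delicate point is that the auxiliary parameter $\mu\in[1,3]$ has to be chosen to balance the polylogarithmic loss in the bulk against the tail threshold, and the three pieces must be combined carefully so that summing $S_1+S_2+S_3+S_4$ closes the induction in the target form $5\ln n + \aleph_n$. Once all four bounds are verified, their sum gives the theorem.
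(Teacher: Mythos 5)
Your overall framework is right: start from \cref{lem:estimate}, observe that the index set of the triple sum in \cref{eq:target} is partitioned by the four cases ($k=1$; $k\ge 2$ with $j<\oconst n$, $i<(n-k)/k$; $k\ge 2$ with $j<\oconst n$, $i\ge(n-k)/k$; $k\ge 2$ with $j\ge\oconst n$), so that the triple sum equals $S_1+S_2+S_3+S_4$ as defined, and then establish the four upper bounds from Equations~(\ref{eq:fixed})--(\ref{eq:caseOk}). Your $S_1$ and $S_4$ sketches are essentially the paper's, and the framing is sound.

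However, your $S_3$ argument contains a genuine gap. You claim that ``every potential face at step $k$ is obtained from a temporary face of $G[\Vup]$, so $\E[PF_k]\le\E[T_{n-k}]$.'' This inequality is false: a single $r$-open temporary face gives rise to $r$ candidate walks, and (for example) when $k=2$ one computes $\E[PF_2]=\Theta(n)$ while $\E[T_{n-2}]=O(\ln n)$, so $\E[PF_k]$ can vastly exceed $\E[T_{n-k}]$. The paper instead uses the first-moment bound $\E[PF_k]\le(n-k)/k$ together with a \emph{second-moment} bound $\E[PF_k^2]\le\frac{(n+2-3/k)(n-k)+2\E[T_{n-k}]}{k^2}$ from \cref{lem:power_of_expectation} (where $\E[T_{n-k}]$ enters only as a correction in the second moment, controlled via \cref{thm:logsqBound}), and the key technical device for $S_3$ is the rewriting $q(j,i)\le\widehat q(i)=f(i)\cdot i^2$ so that $S_3\le\sum_k M_k\,\E[PF_k^2]$, where $M_k$ is bounded via a convexity/monotonicity analysis of $f(t)=\widehat q(t)/t^2$ (\cref{lem:convex,lem:M_k bounds}). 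Your sketch skips this entirely. You also misplace the $\ln^\mu(n)$ threshold: in the paper this arises exclusively in $S_4$ (via Hoeffding plus Markov with the inductive hypothesis), whereas the $\pi^2/6$-type term in $S_3$ comes from $\sum_{k\ge 2}1/k^2$ in the estimate of the piece $A$, and the $\ln n-2\ln\ln n$ term comes from the harmonic tail of the piece $B$. Without the second-moment/convexity machinery, the bound in \cref{eq:defS3} does not follow. (Your $S_2$ sketch --- swapping summations and using a telescoping harmonic sum --- also differs from the paper's factoring-out-the-worst-case-$q$ argument, but it is at least plausible; the $S_3$ step is where the proposal as written would fail.)

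Finally, note that ``closing the induction in the target form $5\ln n+\aleph_n$'' belongs to the proof of \cref{thm:logBound}, not to this theorem; here it suffices to establish the bound $S_1+S_2+S_3+S_4$.
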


\paragraph{Organization of the remainder of the section.}
First, we carefully compute our estimates and therefore we prove our main result.
It remains to prove the bounds (\ref{eq:fixed})--(\ref{eq:caseOk}) on $S_1$, $S_2$, $S_3$, and $S_4$. 
In order to do that, we show estimates on first and second moment of random variable $\PF_k$.
We follow by Subsections~\ref{sub:S1}, \ref{sub:S2}, \ref{sub:S3}, and \ref{sub:S4}, where the bounds (\ref{eq:fixed})--(\ref{eq:caseOk}) are proven.
Using the formulation of \cref{thm:Completegraphslargevalues} and the estimates in Subsections~\ref{sub:S1}, \ref{sub:S2}, \ref{sub:S3}, and \ref{sub:S4}, we conclude the proof of \cref{thm:logBound} by analysis on different values of $n$.
  We postpone the detailed case analysis to \cref{sec:A}.

  \toappendix{
\begin{proof}[Case analysis of values in \cref{thm:logBound}]
  Using the formulation of \cref{thm:Completegraphslargevalues} and the estimates in Subsections~\ref{sub:S1}, \ref{sub:S2}, \ref{sub:S3}, and \ref{sub:S4}, we conclude by analysis on different values of $n$.
  We adjust the parameters based on $n$.
\begin{itemize}
  \item[Case 1]
  For $\ComputThr< n< e^{20}$ we set $\oconst\df \frac{5}{13}$ ($\Oconst=\frac{8}{13}$), and  $\mu\df 1.5$.
\item[Case 2]
  For $ e^{20}\le n \le e^{e^{16}}$ we set $\oconst\df \frac{1}{25}$ ($\Oconst=\frac{24}{25}$), and $\mu\df 2.1$.
\item[Case 3]  For $n> e^{e^{16}}$ we set $\oconst\df \frac{1}{1000}$ ($\Oconst=\frac{999}{1000}$), and $\mu\df 3$.
\end{itemize}
The initial estimate follows by \cref{thm:Completegraphslargevalues} together with estimates proved in \cref{sub:S1} (\cref{eq:fixed}), \ref{sub:S2} (\cref{eq:case1-better}), \cref{sub:S3} (Equations (\ref{eq:S3:base}) and (\ref{eq:S3:assymptotic})), and \cref{sub:S4} (\cref{eq:caseOk}). 

Suppose that $n>\ComputThr$ and $\Oconst\ge\tfrac{8}{13}$. Then we have
\begin{enumerate}
  \item $\ln{\left(\frac{\Oconst{}n-3/2}{\Oconst{}n-1/2-\frac{n}{2}}\right)}< 1.675$ %
  \item If $\Oconst=\tfrac{8}{13}$, $\Oconst=\tfrac{24}{25}$, or $\Oconst=\tfrac{999}{1000}$ then $\frac{1}{\Oconst}\left( \ln(\Oconst/2) - \ln(5\Oconst/2-1) \right)<-0.9$. %
  \item $\frac{2n}{\Oconst n - 5/2}<3.251$ %
  \item $\ln(2 \Oconst)   \frac{\frac{\pi^2}{6}-1}{\Oconst^2}  \left(1 + \frac{4}{\Oconst n - 2}\right)<1.18$ %
  \item In either case listed above, $\Oconst n \ln(\Oconst n) e^{\frac{-n\oconst^2}{2}}<0.001$ %
  \item 
    $\frac{\nu\ln(\Oconst n)\left(5\ln n+\aleph_{\lceil\oconst n\rceil}^{n-\left\lceil \frac{2}{\oconst}ln^\mu(n)\right\rceil}\right)}{\ln^{\mu}(n)} <
    \begin{cases}
   19.51 & \text{for $\ComputThr\le n\le e^{11}$.} \\ %
  19.774 & \text{for $ e^{11}< n \le  e^{12} $.} \\ %
  19.569 & \text{for $ e^{12}< n \le  e^{13}$.} \\ %
  20.95 & \text{for $e^{13}< n \le  e^{20}$.} \\  %
  5.51 & \text{for $e^{20}< n \le  e^{e^{16}}$.} \\ %
  0.001 & \text{for $e^{e^{16}}<n$.}  \\ %
    \end{cases}$
\item
   $\frac{2\ln^\mu(n) \ln(\Oconst n)\left(5\ln n+\aleph_{n-\left\lceil \frac{2}{\oconst}ln^\mu(n)\right\rceil + 1}^{n-2}\right)}{\oconst^2 n} <
   \begin{cases}
  12.519 & \text{for $\ComputThr\le n\le e^{11}$.} \\  %
  9.53 & \text{for $ e^{11}< n \le  e^{12} $.} \\  %
 4.5 & \text{for $ e^{12}< n \le  e^{13} $.} \\  %
  2.05 & \text{for $ e^{13}< n \le  e^{20} $.} \\  %
  4.4 & \text{for $ e^{20}< n \le  e^{e^{16}} $.} \\  %
  0.001 & \text{for $  e^{e^{16}}<n $.} \\  %
  \end{cases}$
\item Finally, we get:\\
 $\E[F(n)] <
   \begin{cases}
   6 \ln n + 44 \le 5 \ln n + 55,& \text{for $\ComputThr\le n\le e^{11}$,} \\ 
   6 \ln n + 41.1 \le 5 \ln n + 53.1,& \text{for $e^{11}\le n\le e^{12}$,} \\ 
   6 \ln n + 36 \le 5 \ln n + 49,& \text{for $e^{12}\le n\le e^{13}$,} \\ 
   6 \ln n + 35 \le 5 \ln n + 55,& \text{for $e^{13}\le n\le e^{20}$,} \\ 
   4.42 \ln n + 22 \le 5 \ln n + 11,& \text{for $e^{20}\le n\le e^{e^{16}}$,} \\ 
   3.6484 \ln n -2 \le 3.65 \ln n,& \text{for $n\ge e^{e^{16}}$.} \\ 
   \end{cases} $\hfill\qedhere
\end{enumerate}
\end{proof}
}

\lv{
Before we dive into case analysis of the upper bound of Inequality~(\ref{eq:target}), we show an important lemma that bounds the
first two moments of the random variable $\PF_k$. This will be used in the final computations.  Recall that $T_{n-k}$ represents the expected number of temporary faces in the random embedding of $\Vup$.

\begin{lemma}\label{lem:power_of_expectation}
Let $n\ge3$ and $k\le n-2$ be natural numbers. Then
\[
  \E[\PF_k]\le \frac{n-k}{k}
\]
and
\[
    \E[\PF_k^2]\le \frac{(n-k) \left( n+2-\tfrac{3}{k}\right) + 2\E(T_{n-k})}{k^2}.
\]
\end{lemma}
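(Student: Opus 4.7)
For the first moment, I would exploit linearity together with the simple per-vertex activation model. Let $\mathcal{W}$ denote the collection of candidate walks (in the random partial embedding of $V^\uparrow$) whose starting dart and ending dart lie on distinct vertices; since Random choice~1 selects a single unpaired dart uniformly at each vertex of $V^\uparrow$ and independently across vertices, walks with both endpoints on a single vertex contribute $0$ to $PF_k$, so $PF_k = \sum_{w \in \mathcal{W}} X_w$ where $X_w$ is the indicator that both endpoints of $w$ become active. Then $\Pr[X_w = 1] = 1/k^2$, and $|\mathcal{W}| \le (n-k)k$ because each of the $(n-k)k$ unpaired darts of $V^\uparrow$ initiates exactly one candidate walk. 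Linearity immediately gives $\E[PF_k] \le (n-k)/k$.

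For the second moment, I would expand $\E[PF_k^2] = \E[PF_k] + \sum_{w,w' \in \mathcal{W},\, w \ne w'} \E[X_w X_{w'}]$ and classify ordered pairs of distinct walks by their dart/vertex overlap. The three nonzero cases are: (a) four distinct darts on four distinct vertices, contributing $1/k^4$; (b) exactly one shared dart (which forces the two walks to be cyclically consecutive along a common temporary face) with the two remaining endpoints on two further distinct vertices, contributing $1/k^3$; and (c) two shared darts, which occurs precisely when the two walks traverse a strongly $2$-open temporary face in opposite directions, yielding $1/k^2$ for each of the two ordered pairs arising from such a face. All other configurations force two different darts to coincide at a shared vertex and therefore contribute $0$. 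Case (c) is bounded by twice the number of strongly $2$-open temporary faces, which in turn is at most $2 T_{n-k}$. Case (b) is counted via the identity $\sum_d \beta_d(\beta_d - 1)$, where $\beta_d \in \{0,1,2\}$ is the number of walks of $\mathcal{W}$ incident with the unpaired dart $d$. Plugging $a \le (n-k)k$ into the resulting expansion and taking expectation over the embedding of $V^\uparrow$ delivers the claimed inequality.

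The main technical challenge will be extracting the sharp coefficient $n + 2 - 3/k$ rather than the weaker $n + 2 - 1/k$ that falls out of the crudest enumeration. This refinement requires carefully combining (i) the identity relating the single-dart and two-dart sharing pair counts to the local degree sequence $\{\beta_d\}$ (noting that $\beta_d = 2$ forces the two walks through $d$ to be adjacent along the same face), and (ii) the reduction in $a$ caused by loop candidate walks (arising from $1$-open temporary faces) and by candidate walks with both endpoints at the same vertex. Importantly, the dependence on $\E[T_{n-k}]$ enters only through the strongly $2$-open face term, which by the inductive logarithmic bound on $\E[T_{n-k}]$ is a lower-order additive correction and does not pollute the leading $(n-k)(n+2-3/k)/k^2$ behaviour.
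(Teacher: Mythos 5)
Your proposal is correct and follows essentially the same route as the paper: decompose $PF_k$ into indicators $X_w$ over candidate walks, note $\E[X_w]\le 1/k^2$, expand the second moment, and classify off-diagonal pairs by how many darts (and vertices) they share, with the three non-trivial cases yielding probabilities $1/k^2$, $1/k^3$, $1/k^4$ and the $1/k^2$ case tied to strongly $2$-open faces, whose count is bounded by $T_{n-k}$. Your $\sum_d \beta_d(\beta_d-1)$ bookkeeping is a clean reformulation of the paper's observation that there are at most $k(n-k)$ consecutive-pair sharings, each strongly $2$-open face absorbing two of them.

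One small correction to your roadmap: you do not need item (ii) (``the reduction in $a$'' from loop walks and same-vertex walks) to reach the coefficient $n+2-3/k$. The paper works with the full count $a=k(n-k)$ of all candidate walks (the ones you exclude from $\mathcal{W}$ simply have $X_w\equiv 0$ and contribute nothing). The $-3/k$ already falls out of subtracting, from the $N^2$ ordered pairs with $N=k(n-k)$, both the diagonal (giving $-N$) and the $\le 2N-4\ell$ ordered consecutive pairs (giving $-2N$, plus a $+2\ell$ that is absorbed); the residual $-4\ell/k^3+2\ell/k^4\le 0$ is simply dropped. So (i) alone delivers the sharp constant, and attempting to shave $a$ further would only prove something stronger than what is claimed.
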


\begin{proof}
There are precisely $k(n-k)$ candidate walks $W_1,W_2,\dots, W_{k(n-k)}$. 
We decompose $\PF_k$ into a sum of $k(n-k)$ indicator random variables $X_i$, where each $X_i$ corresponds to the candidate walk $W_i$ in $V^\uparrow$:
\[ \PF_k =\sum_{i=1}^{k(n-k)} X_i.\]
More precisely, $X_i=1$ if $W_i$ is a potential face, and $0$, otherwise. 
To determine that, we choose, for each vertex $v_t \in \Vup$, one of its unpaired darts and pair it with one of the darts incident with $v_k$.
Each possible dart at $v_t$ is selected uniformly at random (\hyperref[rpb:rc1]{Random choice 1})  with probability $\frac{1}{k}$.
This corresponds to Step \ref{step:3a} in the description of \hyperref[rpb]{Random process B}. 

Now, we use the linearity of expectation to bound $\E[\PF_k]$ and $\E[\PF_k^2]$.
For that we need to determine the values of $\E[X_i]$, $\E[X_i^2]$ and $\E[X_iX_j]$ where $i\neq j$. 

  \begin{claim}\label{c:xi}
    For each $i\in [k(n-k)]$, we have
    \[\E[X_i^2] = \E[X_i] \le \frac{1}{k^2}.\]
  \end{claim}

\noindent\textit{Proof of claim.~}
We just observe that $X_i^2 = X_i$, and that $X_i=1$ if and only if the first and the last darts of the candidate walk $W_i$ are different and both active. If they are different and incident with the same vertex in $\Vup$, then they cannot be both active; otherwise, each of them is active with probability $\frac{1}{k}$. This implies the claim. 
\hfill$\diamondsuit$

\bigskip 
Claim~\ref{c:xi} gives an immediate conclusion about $\E[\PF_k]$.

Two distinct candidate walks are \emph{consecutive} if one originates with the dart that the other leads to.
In other words, last dart of one candidate walk is the first dart of the other candidate walk.

  \begin{claim}\label{c:xixj}
    Let $W_i$ and $W_j$ be candidate walks, where $i\neq j$.
    Then
    \begin{itemize}
      \item    $\Pr[X_i=X_j=1]=0$ if $W_i,W_j$ are the two candidate walks on a 2-open face which is not strongly 2-open face.
      \item    $\Pr[X_i=X_j=1]\le\frac{1}{k^2}$ if $W_i,W_j$ are the two candidate walks on a strongly 2-open face.
      \item    $\Pr[X_i=X_j=1]\le\frac{1}{k^3}$ if $W_i,W_j$ are consecutive candidate walks not on a strongly 2-open face.
      \item    $\Pr[X_i=X_j=1]\le\frac{1}{k^4}$ otherwise.
    \end{itemize}
  \end{claim}

\noindent\textit{Proof of claim.~}
If darts of two candidate walks on a 2-open face which is not strongly 2-open cannot both be active, so $X_iX_j=0$.
Suppose that $W_i$ and $W_j$ are the two candidate walks on a strongly 2-open face $f$ and let $d_1,d_2$ be the corresponding downward darts. As $f$ is strongly 2-open $d_1$ and $d_2$ cannot be incident with the same vertex of $\Vup$.  Hence, each of them is active with probability $\tfrac{1}{k}$, so $\Pr[X_i=X_j=1]=\frac{1}{k^2}$.

Suppose now that $W_i$ and $W_j$ are consecutive candidate walks not on a 2-open face.
Then $X_i=X_j=1$ if and only if all three corresponding darts are active. Since each is active with probability $\tfrac{1}{k}$, we conclude that $\Pr[X_i=X_j=1]\le\frac{1}{k^3}$.

In the remaining possibility, $W_i$ and $W_j$ are distinct candidate walks that are not consecutive.
If they together involve fewer than 4 downward darts and are not in the cases treated above, then one of them (say $W_i$) involves just one downward dart, in which case $X_i=0$ and the considered probability is 0. 
Otherwise, they involve four distinct downward darts, each of which is active with probability $\tfrac{1}{k}$. This implies that $\Pr[X_i=X_j=1]\le\frac{1}{k^4}$.
\hfill$\diamondsuit$

\bigskip

Since the bounds in the claim are dependent on whether the walks are in strongly 2-open faces or not, we continue by estimating the number of strongly 2-open faces at a given step.  Let $L_k$ denote the number of strongly 2-open faces at the start of step $k$. Let us first consider an upper bound of the expectation of $\PF_k^2$ conditional on $L_k=\ell$.

Suppose that there are $\ell$ strongly 2-open faces. There are $k(n-k)$ candidate walks and there are at most $k(n-k)$ pairs of consecutive candidate walks since each pair has a unique downward
dart in common. 
Therefore, there are at most $k(n-k)-2\ell$ consecutive candidate walks that are not in strongly 2-open faces.
Putting these facts together in combination with Claims~\ref{c:xi} and \ref{c:xixj} gives the following:

 \begin{align*}
     &2\sum_{i=1}^{k(n-k)}\sum_{j=i+1}^{k(n-k)}Pr[X_iX_j = 1\mid L_k = \ell]\\
     &\leq 2\ell\,\frac{1}{k^2} + 2 (k(n-k)-2\ell)\,\frac{1}{k^3} + \bigl( k^2(n-k)^2 - k(n-k) - 2\ell-2(k(n-k)-2\ell) \bigr)\,\frac{1}{k^4} \\
     &= \frac{2\ell}{k^2} + \frac{2(n-k)}{k^2} - \frac{4\ell}{k^3} +  \frac{(n-k)^2}{k^2} + \frac{2\ell}{k^4} - \frac{3(n-k)}{k^3} \\
     &\le \frac{2\ell}{k^2} + \frac{2(n-k)}{k^2} + \frac{(n-k)^2}{k^2} - \frac{3(n-k)}{k^3}.
 \end{align*}
 
To conclude the proof, we will use linearity of expectation. %
We will also use $T_{n-k}$ as an upper bound on the number of strongly 2-open faces.
\begin{align}
  \E[\PF_k^2] &=
  \sum_{i=1}^{k(n-k)}\sum_{j=1}^{k(n-k)}\E[X_iX_j]= \sum_{i=1}^{k(n-k)}\E[X_i^2] + 2 \sum_{i=1}^{k(n-k)}\sum_{j=i+1}^{k(n-k)}\E[X_iX_j] \nonumber\\
  & = \sum_{i=1}^{k(n-k)}\E[X_i^2] + 2 \sum_{\ell} \sum_{i=1}^{k(n-k)}\sum_{j=i+1}^{k(n-k)}Pr[X_iX_j=1 \mid L_k = \ell] Pr[L_k = \ell] \nonumber\\
  &\le k(n-k)\,\frac{1}{k^2} + \sum_{\ell} \left( \frac{2\ell}{k^2} + \frac{2(n-k)}{k^2} + \frac{(n-k)^2}{k^2} - \frac{3(n-k)}{k^3} \right) Pr[L_k = \ell]  \nonumber\\
  &= \frac{(n+2-\tfrac{3}{k})(n-k) + 2\E[L_k]}{k^2} \nonumber\\
  &\le \frac{(n+2-\tfrac{3}{k})(n-k) + 2\E[T_{n-k}]}{k^2}. \label{eq:Eksq}
\end{align}
\end{proof}

The proof of \cref{thm:logBound} follows by estimates on parts $S_1$, $S_2$, $S_3$, and $S_4$ which are given in the following subsections.

\subsection{Estimate on $S_1$ (\cref{eq:fixed})}\label{sub:S1}

We estimate the worst-case scenario for function $q$ in the case when $k=1$; see (\cref{eq:defQ-border}) of \cref{def:q}.
Note that for the case $k=1$, $O_k+\PF_k=n-1$.
\begin{align}
  S_1 = \sum_{i=1}^{n-1} \sum_{j=0}^{n-1-i} q(j,i) \cdot \Pr[O_1=j \wedge \PF_1=i] \le q(0,n-1)&\le H_{n-2}+1 \label{eq:S1} \\ 
  &< \ln(n)+1+\gamma. \nonumber
\end{align}
The last inequality follows from \cref{thm:Hnconv} (assuming $n\ge3$).

\subsection{Estimate on $S_2$ (\cref{eq:case1-better})}\label{sub:S2}
\newcommand\nfl{\lfloor n/2 \rfloor}

First, we show a lemma we will be using in our estimates.
\begin{lemma}\label{lem:harmonic}
Let $n\geq 3$, $t\geq 1$, and $\xi$ be integers such that $t+\xi\le n-2$. %
Then
\begin{equation}
  q(\xi,t)= H_{n-\xi -2}-H_{n-\xi-t-2} 
    \le \ln{\left(\frac{n-3/2-\xi}{n-3/2-\xi-t}\right)}
    < \frac{t}{n-3/2-\xi-t} \, .  \label{eq:Lemma2.10}
\end{equation}
\end{lemma}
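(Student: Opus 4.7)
The first equality $q(\xi,t) = H_{n-\xi-2} - H_{n-\xi-t-2}$ is already part of \cref{def:q}, so the work is in the two inequalities. My plan for the first inequality is to apply \cref{thm:Hnconv} to both harmonic numbers. Writing $H_{n-\xi-2} = \ln(n-\xi-3/2) + \gamma + \varepsilon_{n-\xi-2}$ and $H_{n-\xi-t-2} = \ln(n-\xi-t-3/2) + \gamma + \varepsilon_{n-\xi-t-2}$, the Euler--Mascheroni constants cancel and leave
\[
  H_{n-\xi-2} - H_{n-\xi-t-2} = \ln\!\left(\frac{n-\xi-3/2}{n-\xi-t-3/2}\right) + \bigl(\varepsilon_{n-\xi-2} - \varepsilon_{n-\xi-t-2}\bigr).
\]
Thus the first inequality reduces to checking that the error contribution $\varepsilon_{n-\xi-2} - \varepsilon_{n-\xi-t-2}$ is nonpositive.

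To verify this, I plug in the two-sided bound on $\varepsilon_m$ from \cref{thm:Hnconv}: the upper bound gives $\varepsilon_{n-\xi-2} \le \frac{1}{24(n-\xi-2)^2}$, and the lower bound gives $\varepsilon_{n-\xi-t-2} \ge \frac{1}{24(n-\xi-t-1)^2}$. Because $t \ge 1$, one has $n-\xi-2 \ge n-\xi-t-1$, and the hypothesis $t+\xi \le n-2$ makes the smaller of the two quantities at least $1$, so both are positive and may be squared to preserve the inequality. This yields $\frac{1}{24(n-\xi-2)^2} \le \frac{1}{24(n-\xi-t-1)^2}$, which chains to $\varepsilon_{n-\xi-2} \le \varepsilon_{n-\xi-t-2}$, as desired.

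For the final strict inequality I would invoke the textbook estimate $\ln(1+x) < x$, valid for all $x>0$, applied with $x = t/(n-3/2-\xi-t)$. This argument is positive because $t+\xi \le n-2$ forces $n-3/2-\xi-t \ge 1/2$. Writing $\frac{n-3/2-\xi}{n-3/2-\xi-t} = 1 + \frac{t}{n-3/2-\xi-t}$ recasts the logarithm in precisely this form and produces the bound $\frac{t}{n-3/2-\xi-t}$. There is no serious obstacle in the proof; the main points of care are aligning the indices of the error terms $\varepsilon_m$ correctly (note that the smaller of the two is bounded \emph{below}, the larger \emph{above}), and confirming the positivity of $n-\xi-t-1$ and $n-3/2-\xi-t$ from the hypothesis $t+\xi \le n-2$. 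The borderline case $t=1$ gives equality $(n-\xi-2)^2 = (n-\xi-t-1)^2$ in the squaring step, which is consistent with the weak inequality $\le$ appearing in the lemma.
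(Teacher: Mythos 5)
Your proof is correct and follows essentially the same route as the paper's: apply \cref{thm:Hnconv} to both harmonic numbers, cancel $\gamma$, bound the error contribution $\varepsilon_{n-\xi-2}-\varepsilon_{n-\xi-t-2}$ from above by $\frac{1}{24(n-\xi-2)^2}-\frac{1}{24(n-\xi-t-1)^2}\le 0$ using $t\ge 1$, and finish with $\ln(1+x)<x$. The only cosmetic difference is that you spell out the sign check on the error terms more explicitly than the paper does.
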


\begin{proof}
  As $t+\xi<n-2$ we use definition of function $q$ in Equation~(\ref{eq:defQ}).
  Note that the same together with $t\ge 1$ implies that $H_{n-\xi-2}>H_0$ and $H_{n-\xi-t-2}\ge H_0$.
  Hence, \cref{thm:Hnconv} yields the following estimate:
\begin{align*}
    H_{n-\xi -2}-H_{n-\xi-t-2} ~\le~
    & \ln{\left(n-3/2-\xi\right)}+\gamma+\frac{1}{24(n-\xi-2)^2} \\ 
    & -\ln{\left(n-3/2-\xi-t\right)}-\gamma -\frac{1}{24(n-\xi-t-2+1)^2} \\
    ~\le~ & \ln{\left(\frac{n-3/2-\xi}{n-3/2-\xi-t}\right)} %
    ~=~  \ln{\left(1+\frac{t}{n-3/2-\xi-t}\right)} \\
    ~<~ & \frac{t}{n-3/2-\xi-t}.
\end{align*}
In the second inequality we used the fact that $t\ge 1$ and in the last one that $\ln(1+x)\le x$.
\end{proof}

Now, we continue by showing that Inequality~(\ref{eq:case1-better}) holds.
As \hyperref[def:q]{$q(j,i)$} is an increasing function in both $i$ and $j$, we can upper-bound it by the value for the largest $i$ and largest $j$.
Therefore, we can factor it out of the sum and upper-bound the disjoint probabilities by $1$.
Recall that $b=\min(n-k-i,\lceil\oconst{}n\rceil-1)$. 
The first inequality follows by \cref{obs:q}.

\begingroup
\allowdisplaybreaks
\begin{align}
  S_2 &= \sum_{k=2}^{\nfl} \sum_{i=1}^{\lceil\frac{n-k}{k}\rceil-1} \sum_{j=0}^{b} q(j,i) \cdot \Pr[O_k=j \wedge \PF_k=i] \nonumber\\
      &\le \sum_{k=2}^{\nfl} q\left(\min\left(n-k-\lfloor\tfrac{n-k}{k}\rfloor,\lceil\oconst{}n\rceil-1\right),\lfloor\tfrac{n-k}{k}\rfloor\right)\label{eq:S2}\\ 
  &\le \sum_{k=2}^{\nfl} q\left(\lceil\oconst{}n\rceil-1,\lfloor\tfrac{n-k}{k}\rfloor\right)\nonumber\\
  &\le \sum_{k=2}^{\nfl} q\left(\oconst{}n,\tfrac{n-k}{k}\right)\nonumber\\
  &\le \sum_{k=2}^{\nfl} \ln\left(\frac{\Oconst{}n-3/2}{\Oconst{}n-1/2-\tfrac{n}{k}}\right).\nonumber
\end{align}
\endgroup
Recall $\oconst\le 5/11$, $k\ge 2$, and $n\ge 22$.
For the last inequality we used \cref{lem:harmonic} as \[\oconst{}n+\tfrac{n-k}{k}\le \tfrac{5}{11}n+\frac{n-2}{2}\le n-2.\]

Note that 
\[
   \frac{\Oconst{}n-3/2}{\Oconst{}n-1/2-\frac{n}{k}} \le \frac{\Oconst}{\Oconst{}-1/k}
 \]
 when $k\ge 3$ as $\Oconst>\tfrac{1}{2}$.
 Letting $a \df \ln{\left(\frac{\Oconst{}n-3/2}{\Oconst{}n-1/2-\frac{n}{2}}\right)}$, we have
\begin{align*}
    S_2 & \le 
    \sum_{k=2}^{n/2} \ln{\left(\frac{\Oconst{}n-3/2}{\Oconst{}n-1/2-\frac{n}{k}}\right)} \\
     & \le
  \ln{\left(\frac{\Oconst{}n-3/2}{\Oconst{}n-1/2-\frac{n}{2}}\right)} +
  \sum_{k=3}^{n/2} \ln \left(\frac{\Oconst}{\Oconst{}-1/k}\right) \\
    &\le 
 a +
    \sum_{k=3}^{n/2} \frac{1}{\Oconst{} k - 1} =
    a+ \sum_{k=3}^{n/2} \frac{1}{\Oconst{} k - 1}\\
    & \le 
    a+
    \int_{5/2}^{n/2+1/2} \frac{1}{\Oconst{}x-1}dx \\
    & \le a+ \frac{1}{\Oconst{}}
    \int_{5\Oconst/2-1}^{\Oconst{}n/2} \frac{1}{z}\,dz \\
    & = a+  \frac{1}{\Oconst{}} (\ln(\Oconst{}n/2) - \ln(5\Oconst/2-1)) \\
    & = \frac{1}{\Oconst{}}\ln(n) +\ln{\left(\frac{\Oconst{}n-3/2}{\Oconst{}n-1/2-\frac{n}{2}}\right)}  +\frac{1}{\Oconst}\left( \ln(\Oconst/2) - \ln(5\Oconst/2-1)\right)
      .
\end{align*}

\subsection{Estimate on $S_3$ (\cref{eq:defS3} and \cref{eq:defS3:ass})}\label{sub:S3}

In what follows, we will use an auxiliary function $\widehat{q}$ with only one parameter $1\le t \le n-k$ ($\le n-2$) which will be a worst-case upper-bound on the two-parameter function \hyperref[def:q]{$q$}:
\begin{equation}
  \widehat{q}(t)\df
\begin{cases}
\ln{\left(\frac{\Oconst{}n-3/2}{\Oconst{}n-3/2-t}\right)}, & \text{if}\ t\le \Oconst{}n-2, \\
\ln(2t+1), &  \text{if}\ t\ge \Oconst{}n-2. %
\end{cases}\phantomsection\label{eq:qprime}
\end{equation}

\begin{claim}\label{cl:qprime}
Suppose that $2\le k\le n-2$ and $1\le i \le n-k$.
Let $b=b(n,k,i)=\min(n-k-i,\lceil\oconst{}n\rceil-1)$. 
Then $q(b(n,k,i),i) \le \widehat{q}(i)$.
\end{claim}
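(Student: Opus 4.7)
\medskip

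\noindent\textbf{Proof sketch for Claim~\ref{cl:qprime}.}
The plan is to bound $q(b,i)$ once and for all via Lemma~\ref{lem:harmonic}, and then split on the two ranges of $i$ appearing in the definition of $\widehat{q}$. Since $k\ge 2$, the hypothesis $b+i \le n-k \le n-2$ of Lemma~\ref{lem:harmonic} is satisfied, so
\[
  q(b,i) \;\le\; \ln\!\left(\frac{n-3/2-b}{n-3/2-b-i}\right).
\]
Set $y := n-3/2-b$. Because $b = \min(n-k-i,\lceil\oconst n\rceil-1)$, we obtain \emph{two} simultaneous lower bounds on $y$, namely
\[
  y \;\ge\; n-3/2-(\lceil\oconst n\rceil-1) \;=\; n-1/2-\lceil\oconst n\rceil \;\ge\; \Oconst n-3/2,
\]
using $\lceil\oconst n\rceil \le \oconst n+1$, and
\[
  y \;\ge\; n-3/2-(n-k-i) \;=\; k-3/2+i \;\ge\; i+1/2,
\]
using $k\ge 2$. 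One or the other of these will dominate in each of the two cases.

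\smallskip

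In the first case $i\le \Oconst n-2$, I use the bound $y\ge \Oconst n-3/2$. Since the map $y\mapsto \ln(y/(y-i))$ is decreasing in $y$ on $y>i$, and since $\Oconst n-3/2-i \ge 1/2>0$, this gives
\[
  q(b,i) \;\le\; \ln\!\left(\frac{\Oconst n-3/2}{\Oconst n-3/2-i}\right) \;=\; \widehat{q}(i),
\]
as desired. In the second case $i\ge \Oconst n-2$, I instead use $y\ge i+1/2$. The inequality $\ln(y/(y-i)) \le \ln(2i+1)$ rearranges (since both sides are nonnegative) to $y\le (2i+1)(y-i)$, and then to $2(y-i) \ge 1$, i.e.\ $y\ge i+1/2$, which is exactly the bound we have. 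Hence $q(b,i)\le \ln(2i+1)=\widehat{q}(i)$ here as well.

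\smallskip

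The main obstacle I expect is making sure all the boundary conditions line up: verifying that we are in the regular branch of Definition~\ref{def:q} (which holds since $b+i\le n-k\le n-2$ for $k\ge 2$, avoiding the $+1$ border term), verifying that the denominators $\Oconst n-3/2-i$ and $y-i$ are strictly positive throughout their respective ranges, and handling the overlap at $i=\Oconst n-2$ where both cases are applicable — but the argument above supplies both bounds simultaneously there, so no inconsistency arises. Everything else is a direct application of Lemma~\ref{lem:harmonic} together with the two lower bounds on $y$, both of which follow immediately from the definition of $b$ and the hypothesis $k\ge 2$.
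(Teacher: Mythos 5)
Your proof is correct, and it is essentially the same calculation as the paper's --- both rest on Lemma~\ref{lem:harmonic} and on the two lower bounds on $y = n-3/2-b$ that come from the two terms in the minimum defining $b$. The organizational difference is worth noting: the paper splits on which term in $\min(n-k-i,\lceil\oconst n\rceil-1)$ is smaller (i.e., on the value of $b$), whereas you split directly on the two branches of $\widehat q$, i.e., on whether $i\le\Oconst n-2$. Your split is cleaner because it aligns exactly with the target function's definition and, since $b$ is a \emph{minimum}, both lower bounds on $y$ are available simultaneously, so you are free to invoke whichever one you need in each branch. This sidesteps a small imprecision in the paper: in its second case ($b = n-k-i$) it derives $q(b,i)\le\ln(1+\tfrac{i}{k-3/2})\le\ln(2i+1)$ and then writes $\ln(2i+1)=\widehat q(i)$, but that final identity only holds when $i\ge\Oconst n-2$, which is not guaranteed in that case (e.g.\ $k=3$ permits $i$ slightly below $\Oconst n-2$). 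The paper's intermediate bound $\ln(1+\tfrac{i}{k-3/2})$ is in fact still $\le\widehat q(i)$ there --- because $b = n-k-i \le \lceil\oconst n\rceil-1$ forces $k+i\ge\Oconst n$, hence $k-3/2\ge\Oconst n-3/2-i$ --- so the paper's result is right, but your presentation makes the bookkeeping transparent. You might optionally record the monotonicity computation ($\tfrac{d}{dy}\ln\tfrac{y}{y-i}=-\tfrac{i}{y(y-i)}<0$) and note that $y>i$ holds in the first case because $y\ge\Oconst n-3/2>\Oconst n-2\ge i$, but these are routine.
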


\noindent\textit{Proof of claim.~}
If $n-k-i > \lceil\oconst{}n\rceil-1$, then $b=\lceil\oconst{}n\rceil-1$ and $i < n-k-\lceil\oconst{}n\rceil+1 < \lfloor\Oconst{}n\rfloor-1$ (since $k\ge2$). Thus, $i+b\le \lfloor\Oconst{}n\rfloor-2+\lceil\oconst{}n\rceil-1<n-2$ and \cref{lem:harmonic} implies that:
\[
  q(b,i) \le \ln{\left(\frac{n-3/2 - \lceil\oconst{}n\rceil+1}{n-3/2-\lceil\oconst{}n\rceil+1-i}\right)} \le \ln{\left(\frac{\Oconst{}n-3/2}{\Oconst{}n-3/2-i}\right)} = \widehat{q}(i).
\]

Suppose now that $b = n-k-i$ ($\le \lceil\oconst{}n\rceil-1$). Again, $i+b\le n-k\le n-2$ as $k\ge 2$. 
Therefore, \cref{lem:harmonic} applies:
\[
q(b,i) \le \ln{\left(\frac{n-3/2 - b}{n-3/2-b-i}\right)} = \ln{\left(1 + \frac{i}{k-3/2}\right)} \le \ln(2i+1) = \widehat{q}(i).
        \tag*{$\diamondsuit$}
\]

\medskip

Note that in the second case, we use quite a loose upper-bound because we want to keep function $\widehat{q}$ continuous.
It is easy to verify that for $t=\Oconst n-2$ both expressions used in the definition of $\widehat{q}$ coincide, so that $\widehat{q}(\Oconst n-2) = \ln(2\Oconst n-3)$.

\smallskip 
Using the \hyperref[eq:qprime]{function $\widehat{q}$}, we define new function for $1\le t \le n-k$ ($\le n-2$):
\[
  f(t)\df \frac{\widehat{q}(t)}{t^2}.\phantomsection\label{def:f}
\]
First, we show that the function $f$ is convex and few other properties.

  \begin{lemma}\phantomsection\label{lem:convex}
    Let $n\ge 3$ and $2\le k\le n-2$. The \hyperref[def:f]{function $f(t)$} is continuous.
    It is convex on the intervals $[1,\Oconst n-2)$ and $[\Oconst n-2,n-k]$.
    Moreover, if\/ $1\le t < \frac{z_0}{z_0+1}(\Oconst n-3/2)$, where $z_0 \approx 2.51286$ is the non-zero solution of the equation $z+1=e^{z/2}$, or if $t>\Oconst n-2$, then $f(t)$ is decreasing, while for $\frac{z_0}{z_0+1}(\Oconst n-3/2) < t < \Oconst n-2$ it is increasing.
  \end{lemma}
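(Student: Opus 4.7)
The statement decomposes into three separate tasks---continuity at the junction $t^{\star}:=\Oconst n - 2$, monotonicity on each open piece, and convexity on each closed piece---so I will carry them out one by one, treating each branch of $\widehat{q}$ as its own elementary calculus problem.

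Continuity is immediate: both expressions in the definition of $\widehat{q}$ evaluate to $\ln(2\Oconst n - 3)$ at $t^{\star}$, so the two branches of $f$ match there as well. On the right branch $t\in[t^{\star},n-k]$ the formula is $f(t)=\ln(2t+1)/t^{2}$. A direct differentiation gives
\[
t^{3}f'(t) \;=\; \frac{2t}{2t+1} - 2\ln(2t+1),
\]
which is negative for $t\ge 1$ since $\tfrac{2t}{2t+1}<1<2\ln 3\le 2\ln(2t+1)$; hence $f$ is decreasing. For convexity one more derivative shows that the sign of $f''(t)$ matches the sign of $P(t):=6(2t+1)^{2}\ln(2t+1)-20t^{2}-8t$. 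Substituting $s:=2t+1\ge 3$ rewrites the inequality $P(t)\ge 0$ as $6s^{2}\ln s \ge 5s^{2}-6s+1$, which is routine since $6\ln s - 5 > 1$ on $s\ge 3$ while the right-hand side is negative there.

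On the left branch $t\in[1,t^{\star})$, set $A:=\Oconst n - 3/2$ and $h(t):=\ln\bigl(A/(A-t)\bigr)$, so that $f(t)=h(t)/t^{2}$. Differentiating gives
\[
t^{3}f'(t) \;=\; \frac{t}{A-t} - 2\ln\!\frac{A}{A-t}.
\]
The substitution $z:=t/(A-t)\in(0,\infty)$ rewrites $f'(t)=0$ as $z=2\ln(z+1)$, i.e.\ $z+1=e^{z/2}$. The function $e^{z/2}-(z+1)$ vanishes at $z=0$, is negative on $(0,z_{0})$, and positive on $(z_{0},\infty)$, where $z_{0}\approx 2.51286$ is the unique nontrivial root, so the unique nontrivial critical point of $f$ on this branch is $t_{0}=\tfrac{z_{0}}{z_{0}+1}A$. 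Combined with $f(t)\to\infty$ as $t\to 0^{+}$ and as $t\to A^{-}$, this forces $f$ to be decreasing on $[1,t_{0})$ and increasing on $(t_{0},t^{\star})$, as claimed.

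Convexity on the left branch is the one delicate point. Computing
\[
t^{4}(A-t)^{2}f''(t) \;=\; t^{2} - 4t(A-t) + 6(A-t)^{2}\ln\!\frac{A}{A-t}
\]
and substituting $u:=t/A\in(0,1)$ reduces $f''(t)\ge 0$ to the claim that $\phi(u):=5u^{2}-4u-6(1-u)^{2}\ln(1-u)$ is non-negative on $(0,1)$. A direct sign analysis of $\phi''$ is misleading, since $\phi$ is first concave and then convex, so the cleanest route is to expand $-\ln(1-u)=\sum_{k\ge 1}u^{k}/k$, multiply by $(1-u)^{2}$, and read off the identity
\[
\phi(u) \;=\; 2u(1-u)^{2} + \sum_{n\ge 4}\frac{12\,u^{n}}{n(n-1)(n-2)},
\]
which is manifestly non-negative on $[0,1)$. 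This Taylor-series identity is the main obstacle; everything else is routine one-variable calculus.
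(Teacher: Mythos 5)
Your proof is correct, and it is genuinely more explicit than the paper's: the paper relegates the monotonicity analysis and the right‑branch convexity to "routine tasks left to the reader," whereas you carry out all the derivative computations and sign analyses in full. Your treatment of the monotonicity via the substitution $z = t/(A-t)$, leading to $z+1 = e^{z/2}$, matches what the paper had in mind (it is the only reasonable way to produce $z_0$); and your right‑branch calculations ($t^3f'(t) = \tfrac{2t}{2t+1} - 2\ln(2t+1)$ and $(2t+1)^2t^4f''(t) = P(t)$) are all verified.

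Where you and the paper diverge is on the one step the paper actually spells out: convexity on $[1,\Oconst n - 2)$. The paper substitutes $x = t/(\Oconst n - 3/2)$, writes $g(x) = x^{-2}\ln\!\frac{1}{1-x} = \sum_{j\ge 1} \frac{x^{j-2}}{j}$, and observes that this is a termwise sum of convex functions (namely $x^{-1}$, a constant, and $x^k/j$ for $k\ge 1$), so $g$ is convex — no second derivative is ever computed. You instead compute $t^4(A-t)^2f''(t)$ explicitly, reduce its positivity to the one‑variable claim $\phi(u) = 5u^2 - 4u - 6(1-u)^2\ln(1-u) \ge 0$, and verify that by the Taylor identity $\phi(u) = 2u(1-u)^2 + \sum_{j\ge 4}\frac{12\,u^j}{j(j-1)(j-2)}$. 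Both arguments are correct (your coefficient computation checks out: for $j\ge 4$ the coefficient of $u^j$ in $6(1-u)^2\sum u^k/k$ is $\tfrac{6}{j}-\tfrac{12}{j-1}+\tfrac{6}{j-2}=\tfrac{12}{j(j-1)(j-2)}$). The paper's route is shorter and avoids $f''$ entirely; yours is longer but has the virtue of producing an explicit, manifestly non‑negative formula for $f''$ that could be reused if a quantitative lower bound on $f''$ were ever needed. One cosmetic remark: your justification on the right branch, "since $6\ln s - 5 > 1$ on $s\ge 3$ while the right‑hand side is negative there," only parses once one has already divided $6s^2\ln s\ge 5s^2-6s+1$ by $s^2$ and moved the $5$ across; it would be clearer to display that rearranged inequality $6\ln s - 5 \ge -6/s + 1/s^2$ before invoking the two sign claims.
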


\begin{proof}
For what follows, we observe that the function is continuous as it is continuous on two given intervals and the value at $t = \Oconst n-2$ coincides in both expressions.
It is also clear that $f(t)$ is decreasing for $t > \Oconst n-2$.

Suppose now that $t<\Oconst{}n-2$.
For simplicity, we make a linear substitution: $x\df\frac{t}{\Oconst{}n-3/2}$. Now $f$ is convex if and only the function $g(x) = x^{-2} \ln(\frac{1}{1-x})$ is convex for $x\in [\frac{1}{\Oconst{}n-3/2}, \frac{\Oconst{}n-2}{\Oconst{}n-3/2})$.
The result follows by examining Taylor series of $\ln\left(\frac{1}{1-x}\right)=\sum_{j=1}^{\infty} \frac{x^j}{j}$.
Now, the result follows since the (infinite) sum of convex functions is convex.

To see where $f(t)$ is decreasing or increasing, we just need to see where its first derivative is negative. This is a routine task and is left to the reader.

For $t\ge\Oconst{}n-2$, we examine the second derivative $f''(t)$ and prove that it is positive. Again, this is a routine task and is left to the reader.
\end{proof}

\begingroup
\allowdisplaybreaks
Recall that $b=\min(n-k-i,\lceil\oconst{}n\rceil-1)$.
We start with Equation (\ref{eq:defS3}):
\begin{eqnarray*} 
  S_3 &=& \sum_{k=2}^{n-2} \sum_{i=\lceil\frac{n-k}{k}\rceil}^{n-k} \sum_{j=0}^{b} q(j,i) \cdot \Pr[O_k=j \wedge \PF_k=i] \\ %
&\le& \sum_{k=2}^{n-2} \sum_{i=\lceil\frac{n-k}{k}\rceil}^{n-k} q(b,i) \cdot \Pr[O_k\le b \wedge \PF_k=i] \\ 
&\le& \sum_{k=2}^{n-2} \sum_{i=\lceil\frac{n-k}{k}\rceil}^{n-k} q(b,i) \cdot \Pr[\PF_k=i] \\ 
&\le& \sum_{k=2}^{n-2} \sum_{i=\lceil\frac{n-k}{k}\rceil}^{n-k} \widehat{q}(i) \cdot \Pr[\PF_k=i]. \end{eqnarray*}
The last inequality uses the \hyperref[eq:qprime]{function $\widehat{q}$} defined in (\ref{eq:qprime}) that upper-bounds $q(b,i)$ by \cref{cl:qprime}.
We transform it to an equivalent formulation:
\begin{equation} 
 S_3 \le \sum_{k=2}^{n-2} \sum_{i=\lceil\frac{n-k}{k}\rceil}^{n-k} f(i) \cdot i^2 \Pr[\PF_k=i].
 \label{eq:S3 double sum}
\end{equation}
\endgroup

By \cref{lem:convex}, the function $f$ is convex on the interval $[1,\Oconst n-2)$ and is decreasing on the interval $[\Oconst n - 2,n-k]$. This implies that
\begin{equation}
    M_k := \max \{ f(i) \mid \left\lceil\tfrac{n-k}{k}\right\rceil \le i \le n-k\}
    \le \max \left\{ f\left(\left\lceil\tfrac{n-k}{k}\right\rceil\right), f(\Oconst n-2) \right\}.
  \phantomsection  \label{eq:f(i) bounded by maximum}
\end{equation}

\begin{lemma}\label{lem:M_k bounds}
  Let $k\in\mathbb{N}$, $2\le k < \lceil \tfrac{n}{2}\rceil$, $n\ge 243$, and $\tfrac{8}{13}\le \Oconst<1$. 
\\  
If $k\le \frac{\ln(2\Oconst n-3)}{17.65 \Oconst^2}$, then
\begin{align}
     M_k \le 
     \ln(2\Oconst n) \cdot \Oconst^{-2} n^{-2} (1+\tfrac{4}{\Oconst n-4}).\label{est:1}
   \end{align}
  If $k\ge \ln(2\Oconst n) \Oconst^{-1} (1+\tfrac{4}{\Oconst n-4})$, then
  \begin{align}
     M_k \le  
     \frac{k}{n(n-k)} \cdot \frac{1}{\Oconst-1/k - 1/(2n)}.\label{est:2}
    \end{align}
   If $k\ge \lceil \tfrac{n}{2}\rceil$, then \[M_k=f(1)<\frac{1}{\Oconst n -5/2}\]
   If  $\frac{\ln(2\Oconst n-3)}{17.65 \Oconst^2} < k < \min\left(\lceil \tfrac{n}{2}\rceil, \ln(2\Oconst n) \Oconst^{-1} (1+\tfrac{4}{\Oconst n-4})\right)$ $M_k$ is at most sum of Equations~(\ref{est:1}) and (\ref{est:2}).
\end{lemma}

\begin{proof} %
  Consider first $k\ge \lceil \tfrac{n}{2}\rceil$ (as $n\ge 243$ and $\Oconst\ge \tfrac{8}{13}$ we have the first inequality) then 
  \[
  \frac{\ln(2\Oconst n-3)}{(\Oconst n -2)^2} 
  <  \frac{1}{\Oconst n- 3/2}
  = \frac{\tfrac{1}{\Oconst n -5/2}}{1+\tfrac{1}{\Oconst n -5/2}} 
  \le \ln\left(1+\frac{1}{\Oconst n -5/2}\right)=f(1)\le\frac{1}{\Oconst n -5/2}.
\]

Therefore, $k< \lceil \tfrac{n}{2}\rceil$.
We first precompute some estimates on the function \hyperref[def:f]{$f$}.

 \begin{claim} \label{claim:Mk:1}
  If $n\ge 8$, then
   \[ f(\Oconst n-2) \le \ln(2\Oconst n) \cdot \Oconst^{-2} n^{-2} (1+\tfrac{4}{\Oconst n-4}). \]
\end{claim}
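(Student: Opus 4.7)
The plan is to unpack the definitions and reduce the claim to an elementary algebraic inequality.

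By definition of $\widehat{q}$ in (\ref{eq:qprime}), at the crossover point $t=\Oconst n-2$ we have $\widehat{q}(\Oconst n-2)=\ln(2\Oconst n-3)$, so
\[
  f(\Oconst n-2) \;=\; \frac{\ln(2\Oconst n-3)}{(\Oconst n-2)^2}.
\]
Since $\ln$ is monotone, $\ln(2\Oconst n-3)\le \ln(2\Oconst n)$. Hence it suffices to prove
\[
  \frac{1}{(\Oconst n-2)^2} \;\le\; \frac{1}{\Oconst^2 n^2}\Bigl(1+\tfrac{4}{\Oconst n-4}\Bigr).
\]
First I would note that $1+\tfrac{4}{\Oconst n-4}=\tfrac{\Oconst n}{\Oconst n-4}$, which is well-defined and positive because the hypotheses $n\ge 8$ and $\Oconst\ge \tfrac{6}{11}$ (inherited from \cref{lem:M_k bounds}) give $\Oconst n\ge 48/11>4$. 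The desired inequality then becomes
\[
  \Oconst^2 n^2(\Oconst n-4) \;\le\; (\Oconst n-2)^2\,\Oconst n,
\]
and after cancelling the factor $\Oconst n>0$ and expanding $(\Oconst n-2)^2=\Oconst^2 n^2-4\Oconst n+4$, both sides differ by exactly $4$, giving $0\le 4$.

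The entire argument is a short chain of monotonicity plus one line of algebra; there is no real obstacle. The only point to be careful about is the sign of $\Oconst n-4$, which is the reason a hypothesis like $n\ge 8$ (combined with $\Oconst\ge 6/11$) appears in the statement. I would present the proof in the order (i) substitute the definition of $f$ at $t=\Oconst n-2$; (ii) replace $\ln(2\Oconst n-3)$ by $\ln(2\Oconst n)$; (iii) simplify $1+\tfrac{4}{\Oconst n-4}$; (iv) clear denominators and verify $0\le 4$.
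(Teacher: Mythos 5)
Your proof is correct and follows essentially the same route as the paper: plug $t=\Oconst n-2$ into the definition of $\widehat q$ to get $f(\Oconst n-2)=\ln(2\Oconst n-3)/(\Oconst n-2)^2$, weaken the logarithm to $\ln(2\Oconst n)$, and then verify the remaining algebraic inequality. The only stylistic difference is the final algebra: the paper expands $\Oconst^2n^2/(\Oconst n-2)^2 = 1 + (4\Oconst n-4)/(\Oconst n-2)^2$ and then bounds this by $1+4/(\Oconst n-4)$ (the paper writes this last step as an equality, but it is in fact a strict inequality in the right direction, so no harm is done); you instead collapse $1+4/(\Oconst n-4)$ to $\Oconst n/(\Oconst n-4)$, cross-multiply, and reduce to the tautology $0\le 4$, which makes the available slack explicit and is arguably cleaner. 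Your remark about $\Oconst n>4$ being the reason for the $n\ge 8$ hypothesis is also correct (it already follows from the global constraint $\Oconst>\tfrac12$, without needing $\Oconst\ge 6/11$).
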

\noindent\textit{Proof of claim.~}
\begin{align*}
  f(\Oconst n-2)& = \frac{\ln(2\Oconst n -3)}{(\Oconst n-2)^2}
\le \frac{\ln(2\Oconst n )}{\Oconst^2 n^2} \frac{\Oconst^2 n^2}{(\Oconst n-2)^2}%
= \frac{\ln(2\Oconst n )}{\Oconst^2 n^2} \left(1+\frac{4\Oconst n -4}{\Oconst^2 n^2-4\Oconst n +4} \right)\\
                  &\le \frac{\ln(2\Oconst n )}{\Oconst^2 n^2} \left(1+\frac{4 }{\Oconst n-4 } \right).
        \tag*{$\diamondsuit$}
      \end{align*}

 \medskip

 \begin{claim} \label{claim:Mk:2}
   Suppose that $2\le k < \lceil n/2\rceil$ and $\Oconst\ge\tfrac{24}{25}$.
  \[
    f \left( \tfrac{n-k}{k} \right) \le 
      \frac{k}{n(n-k)} \cdot \frac{1}{\Oconst-1/k - 1/(2n)}.
   \]
\end{claim}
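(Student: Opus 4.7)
The plan is to substitute $t=(n-k)/k$ directly into the definition of $f$ and collapse everything using a single elementary logarithmic inequality. First I would check that under the operative hypotheses we lie in the first branch of $\widehat{q}$, i.e.\ $t<\Oconst n-3/2$. Unwinding this gives $k>1/(\Oconst-1/(2n))$, which is \emph{exactly} the condition that the denominator $\Oconst-1/k-1/(2n)$ on the right-hand side of the claim is positive. In the intended range of use from \cref{lem:M_k bounds} (namely $k\ge \ln(2\Oconst n)\,\Oconst^{-1}(1+\tfrac{4}{\Oconst n-4})$), this holds with huge room to spare, so the claim is only meaningful in that regime.

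In that regime, $\widehat{q}(t)=\ln\!\bigl((\Oconst n-3/2)/(\Oconst n-3/2-t)\bigr)$. Substituting $t=(n-k)/k=n/k-1$ gives the convenient identity
\[
  \Oconst n - 3/2 - t \;=\; \Oconst n - n/k - 1/2 \;=\; n\bigl(\Oconst - 1/k - 1/(2n)\bigr).
\]
Now I apply the elementary bound $\ln(1+y)\le y$ with $y=t/(\Oconst n-3/2-t)$ (this is the same trick already used in the last step of \cref{lem:harmonic}) to obtain
\[
  \widehat{q}(t) \;=\; \ln\!\left(1+\frac{t}{\Oconst n-3/2-t}\right)\;\le\; \frac{t}{\Oconst n-3/2-t}.
\]

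Dividing by $t^2=(n-k)^2/k^2$ and plugging in the identity above,
\[
  f\!\left(\tfrac{n-k}{k}\right)\;=\;\frac{\widehat{q}(t)}{t^2}\;\le\;\frac{1}{t\,(\Oconst n-3/2-t)}\;=\;\frac{k}{(n-k)\cdot n\bigl(\Oconst-1/k-1/(2n)\bigr)},
\]
which is exactly the claimed bound. The only non-trivial point—really the only place any care is needed—is the verification that we are in the first branch of $\widehat{q}$ (equivalently, that the right-hand side is positive); after that, the proof is a short chain of substitutions combined with a single use of $\ln(1+y)\le y$.
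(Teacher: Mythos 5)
Your proof is correct and essentially reproduces the paper's argument: the paper likewise substitutes $t=(n-k)/k$ into the first branch of $\widehat{q}$, rewrites $\widehat{q}(t)$ as $\ln\bigl(1+\tfrac{(n-k)/k}{\Oconst n-1/2-n/k}\bigr)$, and applies $\ln(1+x)\le x$ before dividing by $t^2$. Your added verification that $t\le\Oconst n-2$ (equivalently that $\Oconst-1/k-1/(2n)>0$) is a genuine improvement in care — the paper's claim as printed with $\Oconst\ge\tfrac{5}{11}$ does not guarantee this for $k=2$, whereas the ambient hypotheses actually used ($\Oconst\ge\tfrac{6}{11}$, $n\ge 22$) do — though note the claim is invoked in \emph{both} cases of \cref{lem:M_k bounds}, not only the large-$k$ regime as your aside suggests.
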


\noindent\textit{Proof of claim.~}
  \begin{align*}
                                                                &  f\left( \tfrac{n-k}{k} \right) =
      \frac{k^2}{(n-k)^2} \cdot \ln\left(1 + \tfrac{(n-k)/k}{\Oconst n-1/2-n/k} \right) \\
      & \le
      \frac{k^2}{(n-k)^2} \cdot \frac{(n-k)/k}{\Oconst n-1/2-n/k} =
      \frac{k}{n(n-k)} \cdot \frac{1}{\Oconst-1/k - 1/(2n)}.
      \tag*{{$\diamondsuit$}}
      \end{align*}
 
 \medskip 

 Now we will consider two situations:

 \smallskip

 \noindent\textbf{Case 1.~~} $2\le k\le \frac{\ln(2\Oconst n-3)}{17.65 \Oconst^2}$.
 Then we will verify that $f\left(\tfrac{n-k}{k}\right) \le f(\Oconst n -2)$ and by the convexity of $f$ (\cref{lem:convex}) we conclude that \hyperref[eq:f(i) bounded by maximum]{$M_k$} is upper-bounded by \cref{claim:Mk:1}.
We conclude by the following computation where, for the first inequality, we use \cref{claim:Mk:2}.

\begin{align*}
  f \left( \tfrac{n-k}{k} \right) &
 \le     \frac{k}{n(n-k)} \cdot \frac{1}{\Oconst-1/k - 1/(2n)}
 \le     \frac{2k}{n^2 \left( \Oconst-1/k - 1/(2n) \right) }  \\
    & \le  \frac{17.65k}{n^2}  \tag*{(as $k\ge2$ and $\Oconst\ge \tfrac{8}{13}$ and $n\ge 243$)} %
  \\
&\le \frac{\ln(2\Oconst n-3)}{\Oconst^2 n^2} \le  \frac{\ln(2\Oconst n -3)}{(\Oconst n-2)^2}
  = f(\Oconst n-2).  
\end{align*}

 \noindent\textbf{Case 2.~~} $k\ge \ln(2\Oconst n) \Oconst^{-1} (1+\tfrac{4}{\Oconst n-4})$.
 Then we will verify that $f(\Oconst n -2)\le f\left(\tfrac{n-k}{k}\right)$ and by the convexity of $f$ (\cref{lem:convex}) we conclude that \hyperref[eq:f(i) bounded by maximum]{$M_k$} is upper-bounded by \cref{claim:Mk:2}.
We conclude by the following computation where, for the first inequality, we use \cref{claim:Mk:1}.
 \begin{align*}
   f(\Oconst n -2) & \le 
\ln(2\Oconst n) \cdot \Oconst^{-2} n^{-2} (1+\tfrac{4}{\Oconst n-4})
\le  \frac{k}{\Oconst n^2}  %
     \le  \frac{k^2}{(n-k)^2} \cdot \left( \tfrac{(n-k)/k}{(n-k)/k + \Oconst n-1/2-n/k} \right) \\
 &    \le  \frac{k^2}{(n-k)^2} \cdot \ln\left(1 + \tfrac{(n-k)/k}{\Oconst n-1/2-n/k} \right) 
=
f\left(\tfrac{n-k}{k}\right).
 \end{align*}
 \bigskip
 For the range between bounds, $\frac{\ln(2\Oconst n-3)}{17.65 \Oconst^2}  < k < \min(\ln(2\Oconst n) \Oconst^{-1} (1+\tfrac{4}{\Oconst n-4}),n/2)$, we will use the sum of the two bounds of \cref{eq:f(i) bounded by maximum}  (implied by Claims \ref{claim:Mk:1} and \ref{claim:Mk:2}) as an upper bound on the maximum.
  \end{proof}

 \medskip

Below we will use the expectations $E_k^2 := \E(\PF_k^2) = \sum_{i=1}^{n-k} i^2\Pr[\PF_k=i]$ and their upper bound established in \cref{lem:power_of_expectation}.
Now we split the summation in (\ref{eq:S3 double sum}), and use above inequalities from \cref{lem:M_k bounds} to obtain the following:
\begin{align} 
  S_3 &\le \sum_{k=2}^{n-2} M_k \sum_{i=\lceil\frac{n-k}{k}\rceil}^{n-k} i^2 \Pr[\PF_k=i]
  \nonumber \\
  &\le \sum_{k=2}^{n-2} M_k\, E_k^2 \label{eq:small} \\
  & \le 
     \ln(2\Oconst n) \cdot \Oconst^{-2} n^{-2} (1+\tfrac{4}{\Oconst n-4})
  \sum_{k=2}^{\left\lfloor \ln(2\Oconst n) \Oconst^{-1} (1+\tfrac{4}{\Oconst n-4})\right\rfloor} E_k^2 ~ + \nonumber \\
  & ~~~~~
  \sum_{k=\left\lfloor\tfrac{\ln(2\Oconst n-3)}{17.65 \Oconst^2}\right\rfloor}^{\lfloor (n-1)/2 \rfloor} 
      \frac{k}{n(n-k)} \cdot \frac{1}{\Oconst-1/k - 1/(2n)}
  E_k^2 + 
  f(1) \sum_{k=\lceil n/2 \rceil}^{n-2} E_k^2. \label{eq:S3 main inequality}
\end{align}

It remains to estimate the following sums in the above estimate:
\begingroup
\allowdisplaybreaks
\begin{align*} 
    A &\df \sum_{k=2}^{\left\lfloor \ln(2\Oconst n) \Oconst^{-1} \left(1+\tfrac{4}{\Oconst n-4}\right)\right\rfloor} E_k^2,\\
    B &\df \sum_{k=\left\lfloor\tfrac{\ln(2\Oconst n-3)}{17.65 \Oconst^2}\right\rfloor}^{\lfloor (n-1)/2 \rfloor} 
      \frac{k}{n(n-k)} \cdot \frac{1}{\Oconst-1/k - 1/(2n)} \, E_k^2, \quad \textrm{and}\\
    C &\df \sum_{k=\lceil n/2 \rceil}^{n-2} E_k^2.
\end{align*}
\endgroup

Let us first recall from \cref{lem:power_of_expectation} that $k^2 E_k^2 \le (n-k)(n+2-\frac{3}{k})+2\E(T_{n-k})$. Moreover, by \cref{thm:logsqBound} (as $n\ge 22$ and $k\ge 2$), we have that
\[2\E(T_{n-k}) \le 2\E(F(n)) \le H_{n-3} H_{n-2} \le\frac{3n}{2} \le \frac{3}{k}(n-k)+k(n+2) -2n.\] 
Therefore, \begin{align}E_k^2\le \frac{n^2}{k^2}. \label{eq:k2Ek2} \end{align}
Using \cref{eq:k2Ek2}, we get:
\begin{align}
   A = \sum_{k=2}^{\left\lfloor \ln(2\Oconst n) \Oconst^{-1} \left(1+\tfrac{4}{\Oconst n-4}\right)\right\rfloor} E_k^2 \le n^2 \sum_{k\ge2} \frac{1}{k^2} = n^2 \left(\frac{\pi^2}{6}-1\right).\label{eq:A}
 \end{align}
Similarly,
\begin{align}
   C = \sum_{k=\lceil n/2 \rceil}^{n-2} E_k^2 \le n^2\sum_{k=\lceil n/2 \rceil}^{n-2} \frac{1}{k^2} \le
   2n. \label{eq:C}
 \end{align}

 Next, we estimate $B$.
 We now give two separate estimates of $ \frac{1}{\Oconst-1/k - 1/(2n)}$:
 First, suppose that $n\ge 243$, $\Oconst\ge \tfrac{8}{13}$, and $k\ge 69$: 
 \begin{align}
   \frac{1}{\Oconst-1/k - 1/(2n)} \leq \frac{1}{\frac{2}{3}-\frac{1}{69}-\frac{1}{486}}\le 1.67. %
   \label{eq:Bfraction:worse}
 \end{align}
 For an assymptotic estimate, we suppose that $n\ge e^{e^{16}}$ and $\Oconst\ge \tfrac{999}{1000}$, so $k\ge\left\lfloor\tfrac{\ln(2\Oconst n-3)}{17.65 \Oconst^2}\right\rfloor=504470$: %
 \begin{align}
   \frac{1}{\Oconst-1/k - 1/(2n)} < 1.0011. %
   \label{eq:Bfraction:assymptotic}
 \end{align}

  Using $n\ge 243$, $\Oconst\ge\tfrac{8}{13}$, \cref{eq:k2Ek2}, and \cref{eq:Bfraction:worse} we have
\begin{align}
     B &\le  
     \sum_{k=2}^{68}  
\frac{n}{k(n-k)} \cdot \frac{1}{\Oconst-1/k - 1/(2n)} 
+
     \sum_{k=69}^{\lfloor (n-1)/2 \rfloor}  
     \frac{n}{k(n-k)} \cdot \frac{1}{\Oconst-1/k - 1/(2n)} %
         \nonumber\\
      &\le 
      \sum_{k=2}^{68} \frac{1}{k} \frac{1}{\Oconst-1/k - 1/(2n)} +
      \sum_{k=2}^{68} \frac{1}{n-k} \frac{1}{\Oconst-1/k - 1/(2n)} +
      1.67 \sum_{k=68}^{\lfloor (n-1)/2 \rfloor} 
      \left(  \frac{1}{k} + \frac{1}{n-k}\right)\nonumber\\
      &\le 11.42+ %
      0.62+ %
      1.67 \sum_{k=69}^{n-69}
       \frac{1}{k} \nonumber\\
      &= 12.04+  1.67 \left( H_{n-69} - H_{68}\right)   \tag*{(by \cref{thm:Hnconv})}\nonumber\\ 
      &\le 12.04 +  1.67\ln n + 1.67\cdot 0.57722 + \frac{1.67}{24 \cdot 234^2}  - 8.02 \nonumber  \\ %
      &= 1.67\ln n +5.\label{eq:B:worse}
\end{align}

When $n\ge e^{e^{16}}$, $\Oconst\ge\tfrac{999}{1000}$, \cref{eq:k2Ek2}, and \cref{eq:Bfraction:assymptotic} we have
\begin{align}
     B &\le 1.0011 \sum_{k=\left\lfloor\tfrac{\ln(2\Oconst n-3)}{17.65 \Oconst^2}\right\rfloor}^{\lfloor (n-1)/2 \rfloor} 
         \frac{n}{k(n-k)}\nonumber\\
      &= 1.0011 \sum_{k=\left\lfloor\tfrac{\ln(2\Oconst n-3)}{17.65 \Oconst^2}\right\rfloor}^{\lfloor (n-1)/2 \rfloor} 
      \left(  \frac{1}{k} + \frac{1}{n-k}\right)\nonumber\\
      &= 1.0011 \sum_{k=\left\lfloor\tfrac{\ln(2\Oconst n-3)}{17.65 \Oconst^2}\right\rfloor}^{n-\left\lfloor\tfrac{\ln(2\Oconst n-3)}{17.65 \Oconst^2}\right\rfloor} 
       \frac{1}{k} \nonumber\\
      &\le  1.0011 \left( H_{n-1} - H_{\left\lfloor\tfrac{\ln(2\Oconst n-3)}{17.65 \Oconst^2}\right\rfloor-1}\right) \tag*{(by \cref{thm:Hnconv})}\nonumber\\ %
      &\le 1.0011\ln n + 1.0011\cdot 0.57723 - 12.11 \nonumber\\
      &\le 1.0011\ln n -11.5. \label{eq:B:assymptotic}
\end{align}

Combining all the obtained estimates (Equations (\ref{eq:A}), (\ref{eq:C}), and (\ref{eq:B:worse})), we get (when $n\ge 243$ and $\Oconst\ge\tfrac{8}{13}$):
\begin{align}
   S_3 &\le \ln(2\Oconst n) \cdot \Oconst^{-2} n^{-2} (1+\tfrac{4}{\Oconst n-4}) \cdot A +
  B + f(1)\cdot C \nonumber\\
    &\le
 \ln(2 \Oconst n)   \frac{\frac{\pi^2}{6}-1}{\Oconst^2}  \left(1 + \frac{4}{\Oconst n - 2}\right)  +
 1.67\ln n +5+
   \frac{2n}{\Oconst n - 5/2}.
   \label{eq:S3:base} %
\end{align}

For the asymptotic case we combine estimates (Equations (\ref{eq:A}), (\ref{eq:C}), and (\ref{eq:B:assymptotic})) to get the following (assuming $n\ge e^{e^{16}}$ and $\Oconst\ge\tfrac{999}{1000}$):
\begin{align}
   S_3 &\le \ln(2 \Oconst n)   \frac{\frac{\pi^2}{6}-1}{\Oconst^2}  \left(1 + \frac{4}{\Oconst n - 2}\right)  +
 1.0011\ln n -11.5 +
   \frac{2n}{\Oconst n - 5/2}\nonumber\\
       &\le (\ln n + 0.693)\cdot 0.6463 %
       + 1.0011\ln n -11.5 + 2.003 
       \le 1.6474 \ln n - 9.\label{eq:S3:assymptotic}
\end{align}

\subsection{Estimate on $S_4$ (\cref{eq:caseOk})}\label{sub:S4}

The last estimate we need is for the value $S_4$, which counts what happens when $O_k$ is large. Let us first recall that:
\begin{equation*}
  S_4 = \sum_{k=2}^{n-2} \sum_{i=1}^{n-k} \sum_{j=\lceil\oconst n\rceil}^{n-k-i} q(j,i) \cdot \Pr[O_k=j \wedge \PF_k=i]
  = \sum_{k=2}^{n-2} \sum_{i=1}^{\lfloor\Oconst{}n\rfloor-k} \sum_{j=\lceil\oconst n\rceil}^{n-k-i} q(j,i) \cdot \Pr[O_k=j \wedge \PF_k=i].
\end{equation*}

To show the next lemma we will make use of Hoeffding's Inequality.

\begin{theorem}[Hoeffding's Inequality~(\cite{Hoeffding63}, Theorem 1)]\label{thm:Hoff}
Let $X_1,\ldots,X_d$ be independent random variables such that $0\le X_i \le 1$ for each $i$ and let $t>0$.
Then the following holds:
\[
  \Pr\Biggl[\sum_{i=1}^{n} X_i - \E[\sum_{i=1}^{n} X_i]\ge nt\Biggr]\le e^{-2nt^2}.
  \]
\end{theorem}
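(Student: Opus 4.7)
The plan is to prove Hoeffding's inequality via the standard Chernoff/exponential-moment method. First I would center the variables by setting $Y_i \df X_i - \E[X_i]$, so that $\E[Y_i]=0$ and, since $0\le X_i\le 1$, each $Y_i$ lies in an interval $[a_i,b_i]$ of length $b_i - a_i = 1$. Let $S_n \df \sum_{i=1}^n Y_i$, so that the event in question is $\{S_n \ge nt\}$. For any $s>0$, an application of Markov's inequality to the nonnegative random variable $e^{sS_n}$ gives
\[
    \Pr[S_n \ge nt] \;=\; \Pr[e^{sS_n} \ge e^{snt}] \;\le\; e^{-snt}\,\E[e^{sS_n}].
\]
Because the $Y_i$ are independent, the exponential moment factorizes: $\E[e^{sS_n}] = \prod_{i=1}^{n} \E[e^{sY_i}]$.

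The crux is then to bound each factor $\E[e^{sY_i}]$ from above. The key tool, usually called Hoeffding's lemma, states that if $Y$ is a zero-mean random variable with $a\le Y\le b$, then
\[
    \E[e^{sY}] \;\le\; \exp\!\left(\tfrac{s^2(b-a)^2}{8}\right).
\]
I would prove this lemma by writing $Y = \lambda a + (1-\lambda) b$ with $\lambda = (b-Y)/(b-a) \in [0,1]$ and invoking convexity of the exponential to get $e^{sY} \le \lambda e^{sa} + (1-\lambda)e^{sb}$. Taking expectations and using $\E[Y]=0$ yields $\E[e^{sY}] \le \frac{b}{b-a}e^{sa} - \frac{a}{b-a}e^{sb}$. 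Setting $u \df s(b-a)$ and $p\df -a/(b-a)$, a short calculation reduces the task to showing that $\varphi(u) \df -pu + \ln(1-p+pe^{u})$ satisfies $\varphi(u) \le u^2/8$. Since $\varphi(0)=\varphi'(0)=0$ and $\varphi''(u) = \frac{p(1-p)e^{u}}{(1-p+pe^{u})^2} \le \tfrac14$ (by AM--GM applied to $1-p$ and $pe^u$), Taylor's theorem gives $\varphi(u)\le u^2/8$, completing the lemma.

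Applying the lemma with $b_i - a_i = 1$ gives $\E[e^{sY_i}] \le e^{s^2/8}$, so
\[
    \Pr[S_n \ge nt] \;\le\; e^{-snt} \prod_{i=1}^n e^{s^2/8} \;=\; \exp\!\left(-snt + \tfrac{ns^2}{8}\right).
\]
The right-hand side is minimized over $s>0$ at $s = 4t$, which yields the desired bound $\Pr[S_n \ge nt] \le e^{-2nt^2}$. The main (and essentially only) technical obstacle is the Taylor-remainder estimate in the proof of Hoeffding's lemma, in particular verifying the uniform bound $\varphi''(u)\le \tfrac14$; once this is in hand the remainder of the argument is a routine application of the Chernoff recipe.
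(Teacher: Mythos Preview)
Your proof is correct and follows the standard Chernoff--Hoeffding approach. However, the paper does not actually prove this theorem: it is quoted verbatim from Hoeffding's original 1963 paper (as the citation ``\cite{Hoeffding63}, Theorem 1'' indicates) and used as a black box in the proof of \cref{lem:Ok}. There is therefore no ``paper's own proof'' to compare against; your argument is precisely the classical one that Hoeffding gave.
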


\begin{lemma}\label{lem:Ok}
  Let $n\ge 4$, $\mu\in[1,3]$, and let $\aleph_m\in\mathbb{Z}$ such that
  $\E[F(m)]\le 5\ln(m)+\aleph_m$ for all $2\le m<n$.
  For $n>k\ge \frac{2}{\oconst}\ln^\mu(n)$, we have
  \begin{align}
    \Pr\biggl[O_k>\oconst n\biggr]\le e^{\frac{-n\oconst^2}{2}}+ \frac{5+\frac{\aleph_{n-k}}{\ln(n)}}{n\ln^{\mu-1}(n)}. \label{eq:OkSum}
  \end{align}

    For $2\le k<\frac{2}{\oconst}\ln^\mu(n)$
  \[
    \Pr\biggl[O_k>\oconst n\biggr]\le \frac{5\ln(n)+\aleph_{n-k}}{\oconst n}.
    \]
\end{lemma}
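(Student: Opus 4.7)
My plan is to decompose $O_k$ as a sum of indicators whose randomness comes solely from \hyperref[rpb:rc1]{Random choice~1} at step~$k$. For each $v_i\in\Vup$ let $b_i$ be the number of $1$-open temporary faces at the start of step~$k$ whose unique unpaired dart sits at $v_i$, and let $Y_i$ be the indicator that the dart chosen at $v_i$ in \hyperref[rpb:rc1]{Random choice~1} is one of those $b_i$ darts. Conditional on the partial map produced by processing $v_n,v_{n-1},\ldots,v_{k+1}$, the values $b_i$ are fixed, the $Y_i$ are independent (since \hyperref[rpb:rc1]{Random choice~1} picks a dart independently at each vertex of $\Vup$, uniformly from $k$ options), each $\Pr[Y_i=1]=b_i/k$, and $O_k=\sum_{i>k}Y_i$. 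In particular, $\E[O_k\mid\text{partial map}]=B_k/k$, where $B_k\df\sum_i b_i$ is the total number of $1$-open faces at the start of step~$k$.

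Next I control $\E[B_k]$ by the induction. Trivially $B_k\le T_{n-k}$, and by the symmetry of \hyperref[rpb]{Random Process~B} together with the fact that the restriction of a uniformly random cyclic rotation to any subset of darts is again uniformly random, the partial submap on the paired darts of $\Vup$ is distributed as a uniformly random 2-cell embedding of $K_{n-k}$. Hence $\E[T_{n-k}]=\E[F(n-k)]\le 5\ln(n-k)+\aleph\le 5\ln(n)+\aleph$ by the inductive hypothesis of \cref{thm:Completegraphslargevalues}.

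For the small-$k$ regime $2\le k<\tfrac{2}{\oconst}\ln^\mu(n)$, every $1$-open active face was a $1$-open face before step~$k$, so $O_k\le B_k$ and a single Markov application yields
\[
  \Pr[O_k>\oconst n]\le \Pr[B_k>\oconst n]\le \frac{\E[B_k]}{\oconst n}\le \frac{5\ln(n)+\aleph}{\oconst n}.
\]
For the large-$k$ regime $k\ge\tfrac{2}{\oconst}\ln^\mu(n)$ I split on whether $B_k\le \oconst n\,k/2$. On that event $\E[O_k\mid\text{partial map}]\le \oconst n/2$, so conditional Hoeffding (\cref{thm:Hoff}) applied to the $n-k$ independent Bernoullis $Y_i$ gives
\[
  \Pr[O_k>\oconst n\mid\text{partial map}]\le \exp\!\Bigl(-\tfrac{2(\oconst n/2)^2}{n-k}\Bigr)\le e^{-\oconst^2 n/2},
\]
using $n-k\le n$. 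The complementary event has probability at most $\tfrac{2\E[B_k]}{\oconst n k}\le \tfrac{5+\aleph/\ln(n)}{n\ln^{\mu-1}(n)}$ by another Markov step and the hypothesis $k\ge \tfrac{2}{\oconst}\ln^\mu(n)$. Summing these two contributions via the law of total probability yields the claimed bound.

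The main subtlety is cleanly separating the randomness used in steps $n,\dots,k+1$ (which determines the $b_i$'s) from the randomness in \hyperref[rpb:rc1]{Random choice~1} of step~$k$ (which alone determines the $Y_i$'s); once this conditioning is set up correctly, the $Y_i$'s are genuinely independent Bernoullis and Hoeffding applies, while the rest reduces to straightforward Markov estimates.
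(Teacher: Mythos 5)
Your proof is correct and follows essentially the same route as the paper's: decompose $O_k$ into independent Bernoulli indicators over the vertices of $\Vup$ (your $Y_i$'s are the paper's $W_i$'s), bound the conditional mean by $T_{n-k}/k$, apply Hoeffding conditionally, and pick up the remaining tail via Markov and the inductive bound on $\E[T_{n-k}]=\E[F(n-k)]$. The only cosmetic difference is your choice of splitting event ($B_k\le \oconst nk/2$ rather than the paper's $k$-independent threshold $T_{n-k}\le n\ln^\mu(n)$); both imply $\E[O_k\mid\text{partial map}]\le\oconst n/2$ in the relevant range of $k$ and lead to the same final estimate.
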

\begin{proof}
  Now suppose that $k\ge \frac{2}{\oconst}\ln^\mu(n)$.
  Let $W_1,\ldots,W_{n-k}$ be indicator random variables where $W_i$ describes whether vertex $v_i$ is the first (and also the last) vertex in $V^\uparrow$ forming a 1-open walk for vertex $v_{k+i}$, for $1\le i\le n-k$.
  It is readily seen that $O_k=\sum_{i=1}^{n-k} W_i$.
  Let $w_i$ be the number of downward darts incident to vertex $v_{k+i}$ which 
  form a 1-open face.
  Since each such dart must be in a different temporary face, we have $\sum_{i=1}^{n-k} w_i\le T_{n-k}$.
  By linearity of expectation 
  \begin{eqnarray}
    \E[O_k]= \E\left[\sum_{i=1}^{n-k} W_i\right]=\sum_{i=1}^{n-k} \E[W_i]= \sum_{i=1}^{n-k} \frac{w_i}{k}\le \frac{T_{n-k}}{k}. \label{eq:exp}
  \end{eqnarray}

The proof follows as the sum of two conditional probabilities.
First, we show that \[\Pr\bigl[O_k>\oconst n\mid T_{n-k}\le n\ln^\mu(n)\bigr]\le \left( e^{\frac{\oconst^2}{2}}\right)^{-n}.\] 
By Inequality~(\ref{eq:exp}), $\E[O_k]\le\frac{T_{n-k}}{k}\le \frac{n\ln^\mu(n)}{k}$.
We apply Hoeffding's Inequality (Theorem~\ref{thm:Hoff}) on $ W_i$'s:
\begin{align}
    \Pr\biggl[O_k>\oconst n\biggm| T_{n-k}\le n\ln^\mu(n)\biggr]
   & \le \Pr\biggl[O_k - \E[O_k]\ge (n-k)\frac{\oconst nk-n\ln^\mu(n)}{k (n-k)}\biggm| T_{n-k}\le n\ln^\mu(n)\biggr] \nonumber\\
   & \le e^{-2(n-k)\frac{n^2(\oconst k-\ln^\mu(n))^2}{(n-k)^2 k^2}} \label{eq:Hoeff}\\
    &  \le e^{-\frac{2n(\oconst k-\ln^\mu(n))^2}{ k^2}}.\nonumber
    \end{align}
    As $k\ge \frac{2}{\oconst}\ln^\mu(n)$,
    \[
      \le e^{-\frac{2n\left(\frac{\oconst k}{2}\right)^2}{ k^2}}
      \le e^{-\frac{\oconst^2 n}{2}}
      \le \left( e^{\frac{\oconst^2}{2}}\right)^{-n}.
    \]

    Second, we use Markov's inequality with induction to conclude 
    \begin{equation}
      \Pr[T_{n-k}>n\ln^\mu(n)] \le\frac{5\ln(n)+\aleph_{n-k}}{n\ln^\mu(n)}
    = \frac{5+\frac{\aleph_{n-k}}{\ln(n)}}{n\ln^{\mu-1}(n)}.\label{eq:Markov}
  \end{equation}

    The proof of the first part follows by trivial estimates as a sum of both cases.

    For $k\le \frac{2}{\oconst}\ln^\mu(n)$ we use Markov's inequality with induction to conclude 
    \[\Pr[O_k>\oconst n]\le\frac{5\ln(n)+\aleph_{n-k}}{\oconst n}.\qedhere\]
\end{proof}

We show one more usefull lema before concluding the proof.
\begin{lemma}\label{lem:qnuBound}
  Let $k$ be an integer satisfying $\lfloor\Oconst n\rfloor> k\geq 2$ and let \hyperref[def:q]{$q$} be the function defined in \cref{def:q}.
  Then
  \[
    q(\lceil\oconst n\rceil,\lfloor\Oconst n \rfloor-k)\le \ln(\Oconst n)  - [k\ge 3] \ln(k-1.5) + [k=2], 
    \]
    (where the indicator function $[\mathcal P(k)]$ has value $1$ if the property $\mathcal P(k)$ holds, and is $0$ otherwise).
\end{lemma}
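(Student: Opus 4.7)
The plan is to reduce the statement to a comparison of two harmonic numbers and then apply the sharp estimate of \cref{thm:Hnconv}.

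First I would unfold the definition of $q$. Since $\oconst + \Oconst = 1$, we have $\lceil \oconst n\rceil + \lfloor \Oconst n\rfloor = n$, so with $\xi = \lceil \oconst n\rceil$ and $t = \lfloor \Oconst n\rfloor - k$, the relation $\xi + t = n - k$ holds. For $k \ge 2$ we therefore land in the first clause of \cref{def:q} (i.e., $\xi + t \le n-2 < n-1$), which gives
\[
  q\bigl(\lceil\oconst n\rceil,\lfloor\Oconst n\rfloor - k\bigr) \;=\; H_{\lfloor \Oconst n\rfloor - 2} \;-\; H_{k-2}.
\]
This reduces the lemma to estimating the right-hand side; the boundary clause of $q$ is never touched.

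Next I would apply \cref{thm:Hnconv} to both harmonic numbers. The upper bound $H_{\lfloor \Oconst n\rfloor - 2} \le \ln(\lfloor\Oconst n\rfloor - 3/2) + \gamma + \varepsilon$ with a tiny $\varepsilon = O(1/n^2)$, combined with $\lfloor\Oconst n\rfloor - 3/2 \le \Oconst n$, yields $H_{\lfloor\Oconst n\rfloor - 2} \le \ln(\Oconst n) + \gamma + \varepsilon$. The cheap slack $\ln(\Oconst n) - \ln(\Oconst n - 3/2) \ge 3/(2\Oconst n)$ swallows $\varepsilon$ comfortably, so effectively we can pretend $\varepsilon = 0$.

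Then I would split into the two cases matching the indicator in the statement. For $k = 2$, the second harmonic number is $H_0 = 0$, and the bound becomes $\ln(\Oconst n) + \gamma + \varepsilon$; since $\gamma + \varepsilon < 1$, this is at most $\ln(\Oconst n) + 1$, matching the $[k=2]$ term. For $k \ge 3$, I would use the matching lower bound $H_{k-2} \ge \ln(k - 3/2) + \gamma + \varepsilon_{k-2}$ from \cref{thm:Hnconv}; the $\gamma$s cancel and, after absorbing the error terms into the slack noted above, I get $H_{\lfloor\Oconst n\rfloor - 2} - H_{k-2} \le \ln(\Oconst n) - \ln(k - 3/2)$, which is exactly the claimed bound.

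The only mildly delicate point is verifying that the $\varepsilon$-terms really do get absorbed in the $k = 3$ case, where $\varepsilon_1$ is the largest. There I would check numerically that $\gamma + \varepsilon_1 \le 1$ (equivalently $H_1 = 1$, which is tight), and for $k \ge 3$ that $\ln(\Oconst n) - \ln(\lfloor\Oconst n\rfloor - 3/2)$ dominates $\varepsilon_{\lfloor\Oconst n\rfloor - 2} - \varepsilon_{k-2}$ provided $n$ is not absurdly small; the bookkeeping is routine but is the one place where the constants have to be checked rather than merely estimated.
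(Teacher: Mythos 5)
Your proposal is correct and takes essentially the same route as the paper: unfold $q$ via $\lceil\oconst n\rceil + \lfloor\Oconst n\rfloor = n$ to get $H_{\lfloor\Oconst n\rfloor-2} - H_{k-2}$, apply \cref{thm:Hnconv} to both terms, and split on $k=2$ versus $k\ge 3$. One small simplification you miss: since $\lfloor\Oconst n\rfloor > k$ gives $\lfloor\Oconst n\rfloor - 2 \ge k-1$, the upper estimate on $\varepsilon_{\lfloor\Oconst n\rfloor-2}$ is already at most the lower estimate on $\varepsilon_{k-2}$, so the error terms cancel with the correct sign directly and no slack argument is needed.
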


\begin{proof}
  As $k\ge 2$ we have $\lceil\oconst n\rceil +\lfloor\Oconst n \rfloor-k<n-1$, hence, we use Equation~(\ref{eq:defQ}):
  \begin{align*}
    q(\lceil\oconst n\rceil,\lfloor\Oconst n \rfloor-k) &= H_{\lfloor\Oconst n \rfloor-2}-H_{k-2}. %
\end{align*}

If $k=2$ then $q(\lceil\oconst n\rceil,\lfloor\Oconst n \rfloor-2)\le \ln(\Oconst n)+1$ by a trivial estimate.
Otherwise, using \cref{thm:Hnconv} we conclude:
  \begin{align*}
    H_{\lfloor\Oconst n \rfloor-2}-H_{k-2}   &\le \ln(\lfloor\Oconst n \rfloor-1.5)+\frac{1}{24 (\lfloor\Oconst n \rfloor-2)^2}  -\ln(k-1.5) -\frac{1}{24 (k-1)^2} \\
      &\le \ln(\lfloor\Oconst n \rfloor-1.5)  -\ln(k-1.5) \qedhere \\
\end{align*}
\end{proof}

We estimate sum \hyperref[eq:defS4]{$S_4$} as follows.
For the second equality we use the fact that for any $k \ge \lfloor \Oconst n \rfloor$ we must have $j\le n-\lfloor \Oconst n \rfloor-1<\lceil \oconst n \rceil$.
The third inequality holds as $q(\lceil \oconst n \rceil,n-k-\lceil \oconst n \rceil)$ is the maximum possible value function \hyperref[def:q]{$q$} attains the given range of $i,j$.
Indeed, $1+\lceil \oconst n\rceil\le i+j\le n-k$, hence by \cref{eq:defQ}, the function $q(j,i)$ is the largest when $j$ is the smallest possible and then $i+j$ is the largest possible.

\begin{align}
  S_4 &= \sum_{k=2}^{n-2} \sum_{i=1}^{n-k} \sum_{j=\lceil\oconst n\rceil}^{n-k-i} q(j,i) \cdot \Pr[O_k=j \wedge \PF_k=i]  \nonumber\\ 
      &= \sum_{k=2}^{\lfloor \Oconst n \rfloor-1} \sum_{i=1}^{n-k} \sum_{j=\lceil\oconst n\rceil}^{n-k-i} q(j,i) \cdot \Pr[O_k=j \wedge \PF_k=i]  \nonumber\\ 
      &\le \sum_{k=2}^{\lfloor \Oconst n \rfloor-1} q\left(\lceil \oconst n\rceil,\lfloor \Oconst n \rfloor-k\right) \Pr[O_k\ge\oconst n] \label{eq:S4}\\ 
      &\le  q(\lceil \oconst n\rceil,\lfloor \Oconst n \rfloor-k)\Pr[O_2\ge\oconst n]+ \sum_{k=3}^{\left\lceil \frac{2}{\oconst}ln^\mu(n)\right\rceil -1}  q(\lceil \oconst n\rceil,\lfloor \Oconst n \rfloor-k)\Pr[O_k\ge\oconst n]\nonumber\\
      &\,\quad +\sum_{k=\left\lceil\frac{2}{\oconst}ln^\mu(n)\right\rceil}^{\lfloor \Oconst n \rfloor-1}  q(\lceil \oconst n\rceil,\lfloor \Oconst n \rfloor-k)\Pr[O_k\ge\oconst n] \nonumber%
\end{align}

For $0<b<a$, let $\aleph^a_b\df \max_{b<i<a} \aleph_i$.
Using %
\cref{lem:Ok} and \cref{lem:qnuBound} (assuming $n\ge 243$) we conclude:
\begin{align}
  S_4 &\le (\ln(\Oconst n)+1)\frac{5\ln(n)+\aleph_{n-2}}{\oconst n}
  \nonumber \\&\,\,+ \left(  \left\lceil \frac{2}{\oconst}ln^\mu(n)\right\rceil -3\right) \cdot  \ln(\Oconst n)  \cdot \frac{5\ln(n)+\aleph_{n-\left\lceil \frac{2}{\oconst}ln^\mu(n)\right\rceil +1}^{n-3}}{\oconst n}
  \nonumber  \\&\,\,+ \left( \left\lfloor \Oconst n \right\rfloor -1 -  \left\lceil \frac{2}{\oconst}ln^\mu(n)\right\rceil +1\right) \cdot\left( \ln(\Oconst n)-\ln\left(\left\lceil\frac{2}{\oconst}ln^\mu(n)\right\rceil -1.5\right) \right) \cdot \left( e^{\frac{-n\oconst^2}{2}}+ \frac{5\ln n+\aleph_{\lceil\oconst n\rceil}^{n-\left\lceil \frac{2}{\oconst}ln^\mu(n)\right\rceil}}{n\ln^{\mu}(n)} \right) 
   \nonumber \\
     &<  \Oconst n \ln(\Oconst n) e^{\frac{-n\oconst^2}{2}}   
     +\frac{\nu\ln(\Oconst n)\left(5\ln n+\aleph_{\lceil\oconst n\rceil}^{n-\left\lceil \frac{2}{\oconst}ln^\mu(n)\right\rceil}\right)}{\ln^{\mu}(n)} %
   +\frac{2\ln^\mu(n) \ln(\Oconst n)\left(5\ln n+\aleph_{n-\left\lceil \frac{2}{\oconst}ln^\mu(n)\right\rceil +1}^{n-2}\right)}{\oconst^2 n}. \label{eq:S4final}
   \end{align}

 }

 \lv{
\section{Computer-evaluated estimates for small values of $n$}\label{sec:smallValues}

\propComput*

Up to $n\le7$ the exact values are known, see also \cref{tab:face distro} in the introduction.
They can be computed by exhaustive enumeration of all possible embeddings.
Computing higher values might require additional insight to cut down the size of the search space.
Therefore for the computation of small values of $n>7$, we used a different approach. 
\lv{We provide a simple program in Sage\footnote{Available in the sources of our arXiv submission~\cite{ourArxiv}; files \texttt{Num\_bounds.sage} and \texttt{Num\_bounds\_faster.sage}. We also provide there the computed data using this program for $7\le n \le \ComputThr$ in file \texttt{data.txt}.} that is used only to numerically compute the exact upper bounds as derived in the preceding proofs, using previously computed values for smaller numbers of vertices.}

\begin{figure}[tb]
  \begin{center}
  \includegraphics[scale=0.35]{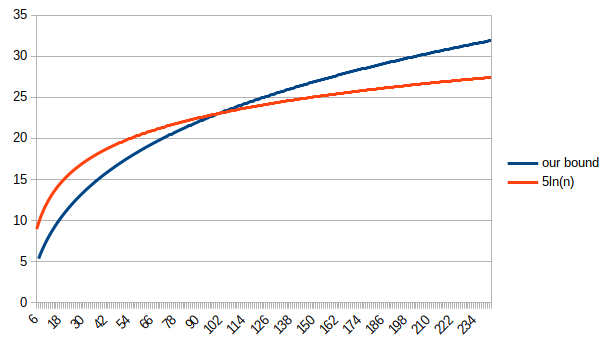}
  \includegraphics[scale=0.35]{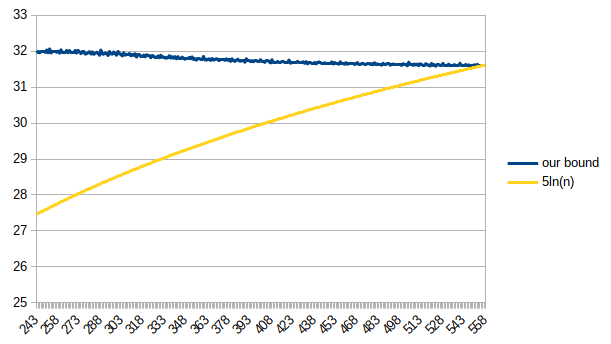}
\end{center}
\caption{Computer evaluated bound given by \cref{eq:logsqForComputation} for $6\le n\le 242$ in the \emph{left}. In the \emph{right} chart, for $243\le n\le 558$, we provide the upper bound of $5\ln(n)+5$.}\label{fig:logsqBound}
\end{figure}

For $n\le 242$, we use bound provided by \cref{thm:logsqBound}.
In fact, for computer computation, we used a very slightly sharper bound which appears in the proof as \cref{eq:logsqForComputation}.
In this range of parameters, $\E[F(n)] < 5\ln(n)+5$; see \cref{fig:logsqBound} the left chart.

For $243\le n\le \ComputThr$, we used partial estimates from the proof in order to minimize the accumulation of overestimation in our analysis.
As in the proof of \cref{thm:Completegraphslargevalues} we express $\E[F]\le S_1+S_2+S_3+S_4$.
Recall that some estimates use induction and, hence, in such cases, we used computer-calculated upper bounds. 
That is, we use bounds for $n'<n$ that we already computed (denoted as $\beta(n')$).
We now describe what we used in our program to upper bound those quantities.
A similar applies to the value of $1/2<\Oconst<1$, which is a split point between the cases.
In principle, $\nu$ can be different for each $n$.
However, to reduce running time, we only considered a couple of values around the value $\nu$ that have performed the best for $n-1$.
For $S_1$, we used a simple estimate in \cref{eq:S1}.
For $S_2$, we used estimate given by \cref{eq:S2}. 
For $S_3$, we used estimate given by \cref{eq:small}, where $M_k$ was estimated by \cref{eq:f(i) bounded by maximum} and $E_k^2=\E[\PF_k^2]$ as estimated by \cref{eq:Eksq}. 
For $S_4$, we used \cref{eq:S4}, where $Pr[O_k\ge \oconst n]$ is estimated in \cref{lem:Ok}.
There, we do one more optimization.
We find a minimum value of sum in \cref{eq:OkSum} by checking all admissible $x$ in the following equation:
\[
  h(x)\df
    e^{-2\frac{(n\oconst k-x))^2}{(n-k) k^2}}+
  \frac{\beta(n-k)}{x}
  .
\]
Observe that in \cref{lem:Ok} this equation originates in \cref{eq:Hoeff} and \cref{eq:Markov}, where the $x$ is fixed to be $n\ln^\mu(n)$.
As a final upper bound of $Pr[O_k\ge \oconst n]$, we take the smaller from $h(x)$ and $\frac{\beta(n-k)}{\oconst n}$.

The upper bound given by this part of computation is $5\ln(n)+5$ for $243\le n\le 558$ and for $559\le n\le \ComputThr$ even $5\ln(n)$, see \cref{fig:logsqBound} the right chart and \cref{fig:logBound} for details.
Note that the sudden increase of our bound in \cref{fig:logBound} (at value $34475$) is caused by a weaker optimization of value $x$ which is fixed to a three possible values as well as $\mu$ was fixed to $0.94$.
That leads to a faster\footnote{This optimalization is implemented in the file \texttt{Num\_bounds\_faster.sage}.} computation compensated by a weaker bound.
All together, the analysis and computations described in this section prove \cref{prop:smallValues}.

\begin{figure}
  \begin{center}
  \includegraphics[scale=0.45]{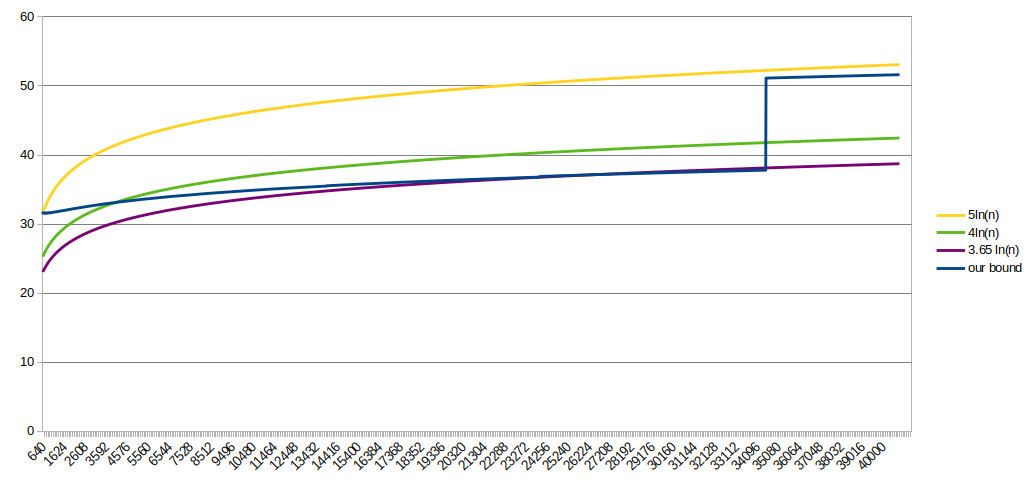}
\end{center}
\caption{Computer evaluated upper bound of $5\ln(n)$ for $559\le n \le 40748$.}
\label{fig:logBound}
\end{figure}
}

\section{Lower bound for complete graphs}\label{sec:lb}

In this section, we provide a counterpart to \cref{thm:completeGraphsAllValues}---a logarithmic lower bound on the expected number of faces~\cref{thm:lb}.

\loglb*

\begin{proof}
  We partition the set of possible faces according to their length and we only count those that are easy to count:
  Let $ F'_k$ be the number of faces having $k$ distinct vertices and $k$~edges on their boundary.
  There are $\tfrac 1k n(n-1)\cdots(n-k+1)$ possibilities for such a face. Each of them becomes a face of a random embedding with probability $(n-2)^{-k}$. Together, we get
  (using Bernoulli's inequality):
  \[
    \E[F'_k] = \frac 1k \prod_{i=0}^{k-1} \frac{n-i}{n-2}
             \ge \frac 1k \prod_{i=0}^{k-1} \Bigl(1 - \frac in\Bigr)
             \ge \frac 1k \Bigl( 1 - \sum_{i=0}^{k-1}\frac in\Bigr)
             \ge \frac 1k \Bigl( 1 - \frac {\binom k2}n \Bigr)
             = \frac{1}{k} - \frac{k-1}{2n}.
   \]
  
   \begingroup
\allowdisplaybreaks
   Let $m \df \lfloor \sqrt{2n}\rfloor$. Then $F \ge F_3' + F_4' + \cdots + F_m'$, and
   \begin{align*}
      \E[F] &\ge \sum_{k=3}^m \E[F'_k] \ge \sum_{k=3}^m \left(\tfrac{1}{k} - \tfrac{k-1}{2n}\right) = 
      H_m - \tfrac{3}{2} - \tfrac{1}{2n}(2+3+\cdots+(m-1)) \ge H_m - 2 \\ &= H_{\lfloor \sqrt{2n}\rfloor} - 2 \ge \ln(\sqrt{2n}) + \Bigl( \ln(\lfloor \sqrt{2n}\rfloor) - \ln(\sqrt{2n})\Bigr)- 2 + \gamma \\
      &\ge \frac{1}{2} \ln(n) + \frac12 {\ln(2)} + \ln(1/2) -2 + \gamma > \frac{1}{2} \ln(n) - 2  
   \end{align*}
   \endgroup
   We have used estimate $H_m \ge \ln(m) + \gamma$ (implied by \cref{thm:Hnconv}) and $\lfloor \sqrt{2n}\rfloor/\sqrt{2n} \ge 1/2$. 
\end{proof}

\section{The $\ln^2(n)$ upper bound on $G(n,p)$} %
\label{sec:random}

In this section, we prove our bound for random embeddings of the random graph $G(n,p)$ when $p$ is not too small. 
We will use a random process very similar to \hyperref[def:rpA]{Random process A}, except that there will be an extra choice at each step of whether we include each edge or not.
More precisely, after processing each edge $(i,j)$ we include it with probability $p$.
This gives a uniformly random embedding of some subgraph of $K_n$, which is distributed according to the distribution of $G(n,p)$.

\bigskip

\noindent
{\bf Random process C}\phantomsection\label{def:rpC}

\begin{enumerate}
\item Order the vertices of the graph $v_{n}, \dots, v_1$ arbitrarily, we process the vertices in this order. 
\label{step:special1}
\item Consider vertex $v_k$ for $k\in[n]$.\label{step:vertex1}
  Label the darts of~$D_k$ as $\{d_1, \dots, d_{n-1}\}$ arbitrarily. We define $R_k$ as this order, that is 
  $R_k(d_i) = d_{i+1}$ (except $R_k(d_{n-1}) = d_{1}$).
        Let $C_k \df \{n,n-1,\dots,k+1, u,u,\dots,u\}$ where there are $k-1$ copies of the symbol $u$ that represent that the dart choosing this option remains unpaired.
        This is the set of choices of where the darts may lead at the end of this step.
    \begin{enumerate}
      \item {\bf Random choice 1:}\phantomsection\label{def:rpC:rc1} For each copy of $u$ in $C_k$, mark it for deletion, by replacing it with the symbol $\xcancel{u}$, with probability $1-p$.
      \item 
        Process darts in $D_k$ in order $d_1, d_2, \dots, d_{n-1}$.  If there is at least one copy of $u$ in $C_k$, give $d_1$ the label $u$, remove one copy of $u$ from $C_k$, and proceed processing $d_2$.  Otherwise, if $k>1$ then give $d_1$ the label $\xcancel{u}$, remove one copy of $\xcancel{u}$ from $C_k$, and proceed processing $d_2$.  If $k=1$, start by processing $d_1$.
        
       Suppose we are processing dart $d_\ell$. 
      \item                   
{\bf Random choice 2a:}\phantomsection\label{def:rpC:rc2a}
       Pick a symbol from the set $C_k$ uniformly at random to label $d_\ell$ with, then remove this choice from $C_k$. 
                   \begin{itemize}
                     \item Case 1: \emph{The choice was some  $i \geq k+1$}. 
{\bf Random choice 2b:}\phantomsection\label{def:rpC:rc2b} 
                       Then pick an unpaired dart $d'$ uniformly at random from the unpaired darts at $v_i$.
                       If the dart $d'$ has label $u$, then add the transposition $(d',d_\ell)$ to the permutation $L$. 
                       If the dart $d'$ has label $\xcancel{u}$, then remove $d_\ell$ from~$D_k$, $d'$ from~$D_i$, and redefine $R_k$, $R_i$ appropriately.
                       \item Case 2: \emph{The choice was some $u$ or $\xcancel{u}$.}  Then leave dart $d_\ell$ unpaired.
                       \end{itemize}
                   If $\ell < n-1$, then proceed with the next dart in the order.
    \end{enumerate}
                   If $k\ge2$, then proceed with the next vertex in the order.\hfill $\lrcorner$

\end{enumerate}

\bigskip

\begin{observation}\label{obs:rand_proces_C}
  \hyperref[def:rpC]{Random process C} outputs a combinatorial map $(D,R,L)$ with the following properties:
 \begin{itemize}
     \item The underlying graph $G$ has distribution $G(n,p)$. 
     \item For each fixed graph $G_0$, when we restrict on the instances when $G=G_0$, the map $(D,R,L)$ 
       is a random embedding of~$G_0$ (as defined in \cref{sec:REintro}).
 \end{itemize}
\end{observation}

\medskip

Note that the underlying graph of the embedding outputted by this process may not be connected.  To allow for that case, we define the \emph{number of faces} for maps that are not connected as the sum of the number of faces in each connected component minus the number of connected components plus one. This corresponds to the fact that one can always pick an arbitrary face $f_1$ in one connected component $H_1$ and an arbitrary face $f_2$ in another connected component $H_2$ and insert the whole embedding of $H_2$ inside $f_1$ using $f_2$ as a boundary. Note that each isolated vertex (in fact any tree) always contributes zero towards the number of faces.

For a graph embedding $M$, let $c(M)$ denote the number of connected components of the underlying graph.  We define $F(n,p)$ as the random variable $F(M) - c(M) + 1$, for a random map $M$ obtained by \hyperref[def:rpC]{Random process C}.  We are now ready to prove the main result of this section.

\randomGraphs*

\smallskip

\begin{proof}
  Suppose we are at some step of \hyperref[def:rpC]{Random process C}. Then we have a partially constructed graph embedding, which we denote by the partial map $M$.  In the proof of Theorem \ref{thm:logsqBound}, we added a completed face at a step of \hyperref[def:rpA]{Random Process A} if and only if we paired the active dart $d_\ell$ with an unpaired dart in a partial walk starting or ending with $d_\ell$.  However, with \hyperref[def:rpC]{Random process C}, we could add a temporary face at this step, then at some later steps remove all the unpaired darts in this temporary face.  This would make this temporary face into a completed face in the final embedding.

In order to analyse these ways of adding completed faces, we define a new random variable.  Let $F_\ell^k$ be the random variable for the number of temporary faces created after processing $d_\ell$ at step $k$ that satisfy one of the following conditions:
\begin{itemize}
    \item  They are a completed face.
    \item  They are a $j$-open face for some $j>0$, where each of the $j$ unpaired darts in this temporary face has label $\xcancel{u}$.  %
\end{itemize} 

\begin{claim}
 We have $\E[F(n,p)] \leq \sum_{k=1}^n \sum_{\ell=1}^{n-1} \E[F_\ell^k].$
\end{claim}

\noindent\textit{Proof of claim.~}
After each dart $d_\ell$ is paired we add either one or two new temporary faces to $M$.  If we make a completed face, then this will not be affected by the rest of the random process, and so will appear in the final embedding.  If we make a $j$-open face for $j>0$, then this temporary face becomes a completed face in the final embedding if and only if every unpaired dart contained in it is removed at a later step of the random process.  This happens if and only if every unpaired dart in the temporary face is given the label $\xcancel{u}$.  This proves the claim.  \hfill$\diamondsuit$

\medskip

We continue by estimating $\E[F_\ell^k]$.  Firstly, for $\ell = n-1$ and $k=1$, we give the worst possible bound of $\E[F_{n-1}^1] \leq 2$.  Now suppose $\ell<n-1$ or $k>1$.  Suppose we are at the step of \hyperref[def:rpC]{Random process C} where we are processing dart $d_\ell$ at vertex $v_k$.  If $k>1$, recall that the dart $d_1$ has the label $u$ or $\xcancel{u}$.  Let $M$ be the partial map at the start of this step.  Further, suppose that we pair $d_\ell$ with some dart $d$ at this step.  We have two cases, see Figure \ref{fig:gnprandomprocess} for an example of this analysis.

\begin{figure}
    \centering
    \includegraphics[scale=0.79]{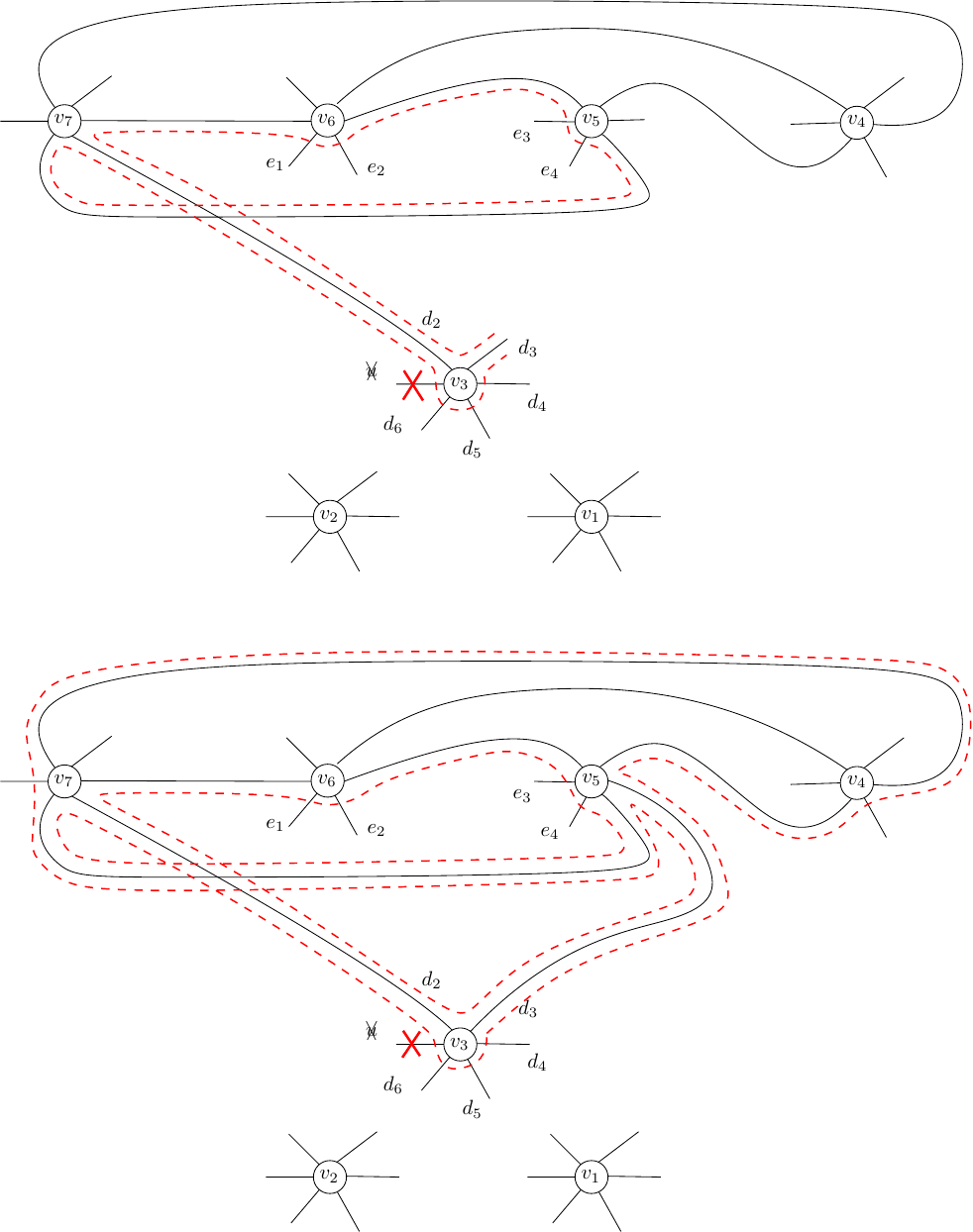}
    \caption{
      An example of a step of \hyperref[def:rpC]{Random Process C}, where every one of the darts $u$ has been marked for deletion.  Here we are processing the dart $d_3$ at vertex $v_3$.  At this step our set of choices is $C_3 = \{4,5,6,\xcancel{u}\}$.  The temporary facial walk starting at $d_3$ is traced out in red dotted line.  It turns out that this temporary facial walk ends at $d_3$.  If we choose $5 \in C_3$, then there are three choices of dart at $v_5$ to pair with to make an edge.  If we pair into $e_3$, then we make a new $2$-open face.  If we pair into $e_4$ we make a new $3$-open face.
      On the lower picture, as we paired $d_3$ to a dart of a different face and we know that $F_{k,\ell}=0$ in this case.
      Potentially, we could have created $F^\sigma_k$, which is traced in red dotted line.
      This will become $F^\sigma_k$ if later all $d_4$, $d_5$, and $d_6$ will either be unpaired (and so also marked for deletion) or they will be paired with a dart in $\Vup$ which is marked for deletion already.
    }
    \label{fig:gnprandomprocess}
\end{figure}

Case 1: The darts $d_\ell$ and $d$ are in different temporary faces in $M$.  Then pairing $d_\ell$ and $d$ joins these two temporary faces together to make a new temporary face.  If $\ell<n-1$ then the dart $d_{\ell+1}$ is unpaired, and so this new temporary face contains the unlabelled dart $d_{\ell+1}$.  This means this temporary face contains a dart not labelled $\xcancel{u}$, and so $F_\ell^k=0$.  If $\ell=n-1$ and $k<n$, then $R_k(d_{n-1}) = d_1$ has the label $u$ or $\xcancel{u}$.  Therefore, we may create an additional temporary face containing only darts labelled $\xcancel{u}$ at this step only if $d_1$ has the label $\xcancel{u}$.

Case 2: The darts $d_\ell$ and $d$ are in the same temporary face in $M$.  Then pairing $d_\ell$ and $d$ splits this temporary face into two new temporary faces.  More precisely, suppose a walk around the temporary face containing $d_\ell$ visits unpaired darts $d_\ell, *, e_{j-1} , e_{j-2} , \dots , e_1$ in this order where $*$ denotes some unpaired darts around vertex $v_k$.  Then if we choose $d=e_{i+1}$, we make an $i$-open face and another temporary face that contains $d_{\ell+1}$.  As in the previous case, if $\ell<n-1$ then the dart $d_{\ell+1}$ is unlabelled.  This means this new temporary face containing $d_{\ell+1}$ does not contribute to $F_\ell^k$.  If $\ell = n-1$ then we make an additional temporary face containing only darts labelled $\xcancel{u}$ only if $d_1$ has the label $\xcancel{u}$.  

Notice that Case 1 adds $0$ faces if $\ell < n-1$ and at most $1$ face if $\ell = n-1$, while Case 2 adds at most $1$ face if $\ell < n-1$ and at most $2$ faces if $\ell = n-1$.  Moreover these faces are added in Case 1 and 2 in the exact same circumstances.  Therefore, since we are looking for an upper bound for $\E[F_\ell^k]$, we may disregard Case 1 and just analyse Case 2.

The $i$-open face created in Case 2 contributes to $F_\ell^k$ if and only if:
\begin{itemize}
    \item The pairing dart $d$ has the label $u$, and so the edge $(d_\ell, d)$ is included in the embedding.  This happens with probability $p$.
    \item Each of $e_1, e_2, \dots, e_{i}$ has the label $\xcancel{u}$.  Each of these darts has label $\xcancel{u}$ independently with probability $1-p$ due to \hyperref[def:rpC:rc1]{Random Choice 1}, so the probability of them all having label $\xcancel{u}$ is $(1-p)^{i}$.
\end{itemize}
Therefore the probability of this happening is $p(1-p)^{i}$.

Notice that there is at most one choice of dart $d=e_{i+1}$ to pair with which makes an $i$-open face for each $i$.  Suppose that dart $e_{i+1}$ is at vertex $v_t$.  Then dart $d=e_{i+1}$ is chosen if and only if the label $t$ is chosen from $C_k$ in \hyperref[def:rpC:rc2a]{Random Choice 2a}, and the dart $e_{i+1}$ is chosen from the $k$ unpaired darts at $v_t$ in \hyperref[def:rpC:rc2b]{Random Choice 2b}.  There is therefore a probability of at most $\tfrac{1}{(n-\ell)k}$ that we make an $i$-open face for each $i$.  The probability that this $i$-open face becomes a completed face in the final embedding is $p(1-p)^{i}$.  Therefore for $\ell < n-1$ we have
\begin{align} \label{eqn:Efnkbound}
     \E[F_\ell^k] < \sum_{i \geq 0} \frac{p}{k(n-\ell)} (1-p)^{i} = \frac{1}{k(n-\ell)}.
 \end{align}

If $\ell = n-1$ and $k>1$, by the same formula as \eqref{eqn:Efnkbound} there is a probability of at most $\tfrac{1}{k}$ of adding a face not containing $d_{1}$ that becomes a completed face in the final embedding.  Since $\ell=n-1$ we may make an additional temporary face containing dart $d_1$ and only darts labelled $\xcancel{u}$ at this step only if $d_1$ has the label $\xcancel{u}$.  Recall that in \hyperref[def:rpC]{Random process C} we set the label of $d_1$ as $u$ if possible.  It is only not possible if in \hyperref[def:rpC:rc1]{Random Choice 1} we replace each $u$ with $\xcancel{u}$.  There are $k-1>0$ copies of $u$ and each one is independently set as $\xcancel{u}$ with probability $1-p$, so this happens with probability $(1-p)^{k-1}$.

 Therefore, for $\ell = n-1$ and $k>1$ we have
\begin{align*} 
     \E[F_{n-1}^k] \leq \frac{1}{k} + (1-p)^{k-1}.
 \end{align*}

    Summing over all values of $\ell$ for $k>1$ we get
    \[
      \sum_{\ell=1}^{n-1} \E[F_\ell^k] \leq (1-p)^{k-1} + \frac{1}{k} \sum_{\ell=1}^{n-1} \frac{1}{n-\ell} = \frac{H_{n-1}}{k} + (1-p)^{k-1}.
    \]
    For $k=1$, we obtain
    \[
        \sum_{\ell=1}^{n-1} \E[F_\ell^1] \leq 2 + \sum_{\ell=1}^{n-2} \frac{1}{n-\ell} = H_{n-1} + 1.
      \]
    Then summing over all $k$ we obtain
    \begin{align*}
      \E[F(n,p)] &\leq \sum_{k=1}^{n} \left( \frac{H_{n-1}}{k} + (1-p)^{k-1} \right)  \\
      &< H_{n-1} H_n + \frac{1}{p}. \qedhere
    \end{align*}
\end{proof}

This gives a corollary on the expected number of faces of almost all graphs.

\begin{corollary} \label{cor:almostallpolylog}
For any graph~$G$ put $X(G) := \E[F(G)]$, so $X(G)$ is the expected number of 
faces of~$G$ in a random embedding. When we let $G$ be random, specifically 
$G \sim G(n,p)$ we have the following bound
\[
  Pr[X(G) \ge t (H_n^2+\frac{1}{p}) ] \le \frac{1}{t}.
   \]
\end{corollary}

In particular, for constant $p$, most graphs have expected number of faces just a bit above $(\ln n)^2$. 

\begin{proof}
  We use Observation~\ref{obs:rand_proces_C}, Theorem~\ref{thm:randomgraphs} and Markov's inequality. 
  Finally, we note that $\E[X(G)] = \E[F(n,p)]$. 
\end{proof}

\section{The {$\Theta(\log(n))$} bounds for graphs with fixed degree sequence}
\label{sec:randomBnd}

For small values of $p$, we first refer to a result of Chmutov and Pittel \cite{ChPi16}. The authors consider a random surface obtained by gluing together polygonal disks. Taking the dual embeddings of this problem shows this is equivalent to studying random embeddings of random graphs with a fixed degree sequence, where we allow loops and multiple edges. In this case, a corollary to their main result gives that the expected number of faces is asymptotic to $\ln(n) + O(1)$.
Their method of proof uses representation theory. In particular, they use representation theory of the symmetric group and recent character bounds of Larsen and Shalev \cite{larsen2008characters}. We start by giving a combinatorial proof that the expected number of faces in this model is $\Theta(\log(n))$. We then extend this result to random simple graphs with a fixed degree sequence. This second result is not equivalent to a conjugacy class product in the symmetric group. Therefore standard representation theoretic techniques do not apply, but we can still make use of our combinatorial reformulation. 

We do not use a random process. Instead, we count two different things and combine them to give estimates on the expected number of faces. Firstly, we count all the different possible faces which could appear in a random embedding on a fixed degree sequence. Then for each possible face, we estimate the number of embeddings that contain it. When we study random multigraphs, these numbers can be estimated directly. When we restrict to simple graphs, we will appeal to a result of Bollob\'as and McKay \cite{BM86}. This will help us estimate the fraction of faces and embeddings which are simple.

\subsection{Random multigraphs}\label{sec:rand_multi}

We are interested in random graphs with a fixed degree sequence generated using
the configuration model (see \cite{Wo99} for an in-depth description of this
model). Fix an arbitrary sequence of integers 
${\mathbf d} = (t_{1},t_{2},\ldots,t_{n})$ satisfying $2 \leq t_{1} \leq t_{2} \leq \cdots \leq t_{n}$ 
and $\sum_{i=1}^n t_{i} \equiv 0 \pmod{2}$. Whereas the second condition on~${\mathbf d}$ is satisfied 
by the degree sequence of all graphs, the first
condition eliminates vertices of degree $0$ or $1$ that do not affect the number
of faces in a random embedding. We also fix the integer $m = m_{\mathbf d} \df \frac{1}{2}\sum_{i=1}^n t_i$ 
corresponding to the number of edges in a graph with degree sequence~${\mathbf d}$. 

In this subsection, we prove the following result.

\thmmulti*

Given a set $D$ of $2m$ darts and a partition $\lambda \vdash 2m$, we write $C_\lambda$ for the conjugacy class in $Sym(D)$ comprised of all permutations with cycle type $\lambda$. Notice that we can think of ${\mathbf d}$ as a partition of $2m$, so that a random map with degree sequence ${\mathbf d}$ is defined by a map $M=(D,R,L)$ satisfying $D = \{1,\ldots,2m\}$, $R \in C_{\mathbf d}$, and $L \in C_{2^m}$. 

Recall that the expected number of faces in a random map with degree sequence ${\mathbf d}$ is just
the expected number of cycles in a product $R\circ L$ of a pair of random permutations 
$R \in C_{\mathbf d}$ and  $L \in C_{2^{m}}$. Because we are picking $L$ uniformly from 
the conjugacy class $C_{2^m}$, 
the distribution of the number of cycles in $R\circ L$ does not depend on the particular permutation 
$R \in C_{\mathbf d}$. We may therefore fix $R  \in C_{\mathbf d}$ while letting $L$ range
over all possibilities in $C_{2^m}$.
Formally, instead of the set of all maps $\{(D,R,L) \mid L \in C_{2^{m}}, R \in C_{\mathbf d}\}$
we will, in whole Section~\ref{sec:randomBnd}, study 
\[\mathcal{M}_{\mathbf d} = \mathcal{M}^R_{\mathbf d} \df \{(D,R,L) \mid L \in C_{2^{m}}\}, \] 
the \emph{set of all maps with degree sequence $\mathbf d$ up to isomorphism of maps}. 

We define the set of \emph{possible faces} (of maps with the fixed rotation system $R\in C_{\mathbf d}$) to be 
\[\Phi = \{f \mid f \text{ is a cycle of } R\circ L \text{ for some } L \in C_{2^m} \}. \]
In what follows, we use two different measures for the size of a face in $\Phi$.

\begin{Definition}[Face length]\label{def:faceLength}
    Given a possible face $f \in \Phi$ we define
   $u(f)$ as the \emph{unique length} of the face and is defined as the number of different edges in the face.
\end{Definition}

For example, suppose a face has length $k$, visits $k-2$ edges once and visits one edge twice by traveling on either side of this edge. Then this face has unique length $u(f) = k-1$ as it visits $k-1$ different edges.  We will enumerate faces using their unique length.

The natural setting for our analysis is multigraphs (allowing both loops and multiple edges), as restricting the question to simple graphs means restricting $L$ to subsets of their conjugacy classes.
Moreover, by allowing parallel edges and loops we get a very simple formula for the number of maps containing a given element of $\Phi$.
First, we look what happens when we fix a particular face $f\in \Phi$ with $u(f)=k$.

\begin{lemma}\label{lem:completion}
  Each possible face $f\in\Phi$ with $u(f)=k$ appears as a face in $\vert C_{2^{m - k}} \vert$ embeddings
  of some graph with degree sequence $\mathbf d$. 
\end{lemma}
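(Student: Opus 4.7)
The plan is to observe that specifying a face $f$ as a cyclic sequence of darts completely determines the values of $L$ on a particular set of darts, and that the lemma then follows by counting how many involutions $L$ extend this partial assignment.

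First I would fix notation: write the facial walk of $f$ as the cyclic sequence $d_1, d_2, \dots, d_\ell$ of darts, where $\ell = l(f)$. Since $f$ is a cycle of $R \circ L$ with the fixed rotation $R = R_0$, the defining relation $R(L(d_i)) = d_{i+1}$ (indices modulo $\ell$) forces
\[
    L(d_i) = R_0^{-1}(d_{i+1}) \qquad \text{for all } i = 1, \dots, \ell.
\]
Thus the face $f$, viewed as a concrete object in $\Phi$, prescribes the values of $L$ on all $\ell$ darts appearing in its walk.

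Next, I would count exactly how many darts of $D$ have their $L$-image forced by $f$. Call an edge of $f$ \emph{internal} if both of its darts appear in the walk and \emph{boundary} if only one dart does. Since $u(f) = k$, exactly $k$ distinct edges are involved; if $s$ of them are internal, then the darts in the walk satisfy $\ell = 2s + (k-s) = k+s$, so $s = \ell - k$ and there are $2k - \ell$ boundary edges. The internal edges contribute $2(\ell - k)$ darts, all already in the walk. Each boundary edge contributes its in-walk dart $d_i$, whose $L$-image $R_0^{-1}(d_{i+1})$ lies outside the walk; this simultaneously fixes $L$ on that outside dart. Hence the total number of darts on which $L$ is forced equals
\[
    2(\ell-k) + 2(2k-\ell) = 2k,
\]
grouped into exactly $k$ transpositions of $L$.

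The remaining $2m - 2k$ darts of $D$ are free and must be paired by a fixed-point-free involution, which can be chosen in exactly $|C_{2^{m-k}}|$ ways; conversely any such choice yields an $L \in C_{2^m}$ making $f$ a face of $(D, R_0, L)$. This gives the claimed count. The main sanity check will be verifying that the forced $k$ transpositions are mutually compatible (i.e., no dart is assigned two different $L$-images), which is immediate from the fact that $f$ is itself a single cycle of $R_0 \circ L$ for \emph{some} $L \in C_{2^m}$ by its membership in $\Phi_{R_0}$; so I anticipate no genuine obstacle beyond bookkeeping.
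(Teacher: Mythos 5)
Your proof is correct and follows essentially the same route as the paper: fix the facial walk, observe that it forces $L$ on precisely $k$ transpositions (one per unique edge of $f$), note that the remaining $2m-2k$ darts may be paired freely, and that any such extension keeps $f$ as a cycle of $R_0 \circ L$. The only difference is presentational — your internal/boundary-edge bookkeeping makes the count of forced darts more explicit than the paper's terser "there are $k$ unique edges, so the other $m-k$ are free," but the underlying argument is the same.
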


\begin{proof}
  Recall that we have fixed a permutation $R \in C_{\mathbf d}$ referring to the rotation systems of the darts at the vertices. We are therefore counting the number of 
  permutations $L \in C_{2^{m}}$, such that $R\circ L$ contains the given face $f$; 
  equivalently, the $k$ different edges of $f$ must all appear in $L$.
    
    Now, the key observation is that the remaining darts can be joined in any way in order to make an embedding containing this face. This means we have free choice for an edge permutation on the remaining darts. Since there are $k$ unique edges in $f$, we, therefore, have free choice for the other $m-k$ edges. There are $\vert C_{2^{m - k}} \vert$ possible edge schemes on this number of darts, giving the result.
\end{proof}

Notice that the number of embeddings in the above lemma only depends on the unique length of the
face and not the structure of the face. This will be used in Lemma~\ref{lem:hk}. 

For a permutation $\tau \in S_n$, define $c(\tau)$ as the number of cycles in this permutation. %
Then the expected number of faces in a random element of $\mathcal{M}_{\mathbf d}$ (denoted as $\E(F_{\mathbf d})$) is given by a simple counting over all possible embeddings:
\begin{align}
\label{eq:exp:basic}
    \E(F_{\mathbf d}) = \frac{1}{|C_{2^{m}}|} \sum_{L \in C_{2^{m}}} c(R \circ L). 
\end{align}

We define $h_k$ as the number of faces $f \in \Phi$ such that $u(f) = k$. \phantomsection \label{def:hk}
Using this notion, we can express the expected number of faces as follows.

\begin{lemma}\label{lem:hk}
  Let $\mathbf d$ be a degree sequence.
  Let $\E(F_{\mathbf d})$ denote the expected number of faces of a random map $M\in\mathcal M_{\mathbf d}$. 
  Then
\[
  \E(F_{\mathbf d}) = \sum_{k=1}^m \frac{h_k}{(2m-1)(2m-3)(2m-5)\dots(2m-2k+1)},
\] 
      where $m$ denotes the number of edges in any $M$ with degree sequence $\mathbf d$. 
\end{lemma}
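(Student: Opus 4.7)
The plan is to re-interpret the sum in Equation~(\ref{eq:exp:basic}) by a double-counting argument over (face, embedding) pairs, and then apply Lemma~\ref{lem:completion} to eliminate the dependence on the structure of a face.

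First I would rewrite $\sum_{L \in C_{2^m}} c(RL)$ as $\sum_{L \in C_{2^m}} \sum_{f} \mathbf{1}[f \text{ is a face of } RL]$, where the inner sum ranges over all possible faces $f \in \Phi_R$. Swapping the order of summation, this equals $\sum_{f \in \Phi_R} |\{L \in C_{2^m} : f \text{ is a face of } RL\}|$. By Lemma~\ref{lem:completion}, the inner cardinality is exactly $|C_{2^{m-u(f)}}|$, and therefore depends only on the unique length $u(f)$. Grouping faces in $\Phi_R$ by their unique length and invoking the \hyperref[def:hk]{definition of $h_k$} gives
\[
\sum_{L \in C_{2^m}} c(RL) \;=\; \sum_{k=1}^m h_k \cdot |C_{2^{m-k}}|.
\]

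Next I would substitute the standard formula for the size of a fixed-point-free involution conjugacy class: $|C_{2^j}| = (2j-1)!! = (2j-1)(2j-3)\cdots 3 \cdot 1$. Therefore
\[
\frac{|C_{2^{m-k}}|}{|C_{2^m}|} \;=\; \frac{(2m-2k-1)!!}{(2m-1)!!} \;=\; \frac{1}{(2m-1)(2m-3)\cdots(2m-2k+1)},
\]
the telescoping of the double-factorial leaving precisely $k$ factors in the denominator. Plugging this back into Equation~(\ref{eq:exp:basic}) yields the claimed formula.

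The only subtlety worth double-checking is the boundary behavior at $k=m$, where the denominator becomes $(2m-1)(2m-3)\cdots 1 = (2m-1)!!$ and $|C_{2^0}|=1$ (the empty matching), which is consistent with our convention. There is no real obstacle here: the entire argument is a rearrangement of the sum followed by a direct application of Lemma~\ref{lem:completion}, so once the double-counting is set up the proof is essentially a one-line computation.
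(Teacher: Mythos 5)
Your proof is correct and follows essentially the same route as the paper: a double-counting swap over (face, embedding) pairs, then Lemma~\ref{lem:completion} to group faces by unique length, then the conjugacy-class size ratio. The only cosmetic difference is that you express $|C_{2^j}|$ as the double factorial $(2j-1)!!$ rather than $\frac{(2j)!}{j!2^j}$, but the telescoping computation is the same.
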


\begin{proof}
  We start with \cref{eq:exp:basic}, which we can rearrange by summing over all possible faces $f \in \Phi$ instead of $L \in C_{2^{m}}$ to get 
\[ 
  \E(F_{\mathbf d}) = \frac{1}{|C_{2^{m}}|} \sum_{f \in \Phi} \left|\{L \in C_{2^m} \mid f\in R\circ L\}\right|.
\]
  We can then rearrange the previous formula in terms of \hyperref[def:hk]{$h_k$} using \cref{lem:completion} for any possible unique length of $k$ to obtain
\begin{equation}\label{eq:framework}
  \E(F_{\mathbf d}) = \frac{1}{\vert C_{2^{m}} \vert} \sum_{k=1}^{m} h_k \vert C_{2^{m - k}} \vert. 
\end{equation}

To finish the proof, we calculate the fraction of the two sizes of conjugacy classes. 
It is straightforward to see that $\vert C_{2^j}\vert = \frac{(2j)!}{j! 2^j}$.
Hence, we have
  \begin{align}
    \frac{\vert C_{2^{m - k}} \vert}{\vert C_{2^{m}} \vert} 
        &= \frac{(2m-2k)! \, (m)! \, 2^{m}}{(2m)! \, (m - k)! \, 2^{m - k}} 
        = \frac{2^k m^{\underline{k}}}{(2m)^{\underline{2k}}}  \nonumber  \\ 
        &= \frac{1}{(2m-1)(2m-3)(2m-5)\dots(2m-2k+1)}.\label{eq:c2mk}
\end{align}

\end{proof}

Recall, that all darts $d \in f$ are ``outgoing''; that is, when we follow the
boundary of~$f$, we next apply $L$ to get to a different vertex and then~$R$.
Let $L(f)$ be the set $\{L(d) : d \in f\}$ be the corresponding set of
``incoming'' darts of~$f$. 
Note that $f \cap L(f)$ is empty if $f$ visits each edge at most once. 
  We say that a face $f$ together with one marked dart $d\in f$ %
  is a \emph{rooted face} and $d$ is called its \emph{root}.
  Let $g_k$ denote the \phantomsection \label{def:gk} number of rooted faces $f$ of unique length $k$ such that $f \in \Phi$.
We will calculate $g_k$ then use the following simple relation between \hyperref[def:hk]{$h_k$} and $g_k$:

\begin{observation}\label{obs:ftog}
  For each $k$, $1\le k\le m$, we have 
    $\frac{1}{2k}g_k \leq h_k \leq \frac{1}{k}g_k$.
\end{observation}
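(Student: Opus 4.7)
The plan is a single-step double-counting argument relating the rooted-face count $g_k$ to the plain face count $h_k$ through the facial-walk length $l(f)$.

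First, I would establish the basic length sandwich $k \le l(f) \le 2k$ for every $f \in \Phi_R$ with $u(f)=k$. The face traverses exactly $k$ distinct edges by definition of $u(f)$; each such edge is visited either once (contributing one of its two darts to the cyclic walk) or twice (contributing both). Writing $a$ and $b$ for the number of edges traversed once and twice, so $a+b=k$ and $l(f)=a+2b$, the inequality $k \le l(f) \le 2k$ is immediate, with the extremes attained at $b=0$ (all edges singly-traversed) and $a=0$ (all edges doubly-traversed) respectively.

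Next, I would argue that the number of valid roots of $f$ is exactly $l(f)$. The root condition ``$d \in f$ with $R^{-1}(d)\in f$'' is exactly meant to single out the darts appearing in the cyclic facial walk $d_1, \ldots, d_{l(f)}$: for $d = d_i$, one has $R^{-1}(d_i) = L(d_{i-1})$, the dart of the edge just used to arrive at $d_i$, which lies on the walk of $f$. So every dart appearing on the walk provides a valid root, giving $l(f)$ roots in total.

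Combining these two facts by double counting,
\[
g_k \;=\; \sum_{\substack{f \in \Phi_R \\ u(f)=k}} l(f),
\]
and sandwiching $l(f) \in [k, 2k]$ yields $k\, h_k \le g_k \le 2k\, h_k$, which rearranges to $\frac{1}{2k}g_k \le h_k \le \frac{1}{k}g_k$. The only mild obstacle is confirming the intended reading of the root condition so that the per-face root count is exactly $l(f)$; once this is in hand, the result is just the length inequality $k \le l(f) \le 2k$ summed over faces.
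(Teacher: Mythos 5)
Your proof is correct and takes essentially the same approach as the paper's; both arguments pin down the number of roots of a fixed face $f$ as the walk length $l(f)=a+2b$ (where $a$ and $b$ count singly- and doubly-traversed edges), then sandwich $k=a+b\le a+2b\le 2k$. The one presentational difference is that the paper counts roots edge-by-edge (one root for each of the $a$ singly-traversed edges, two for each of the $b$ doubly-traversed ones), whereas you identify the roots directly with the $l(f)$ darts of the $R\circ L$-orbit and verify $R^{-1}(d_i)=L(d_{i-1})$ lies on the walk; these are two routes to the same per-face count $a+2b$, so the result and the double-counting identity $g_k=\sum_f l(f)$ are identical.
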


\begin{proof}
  Let $f$ be a face with $u(f)=k$.
  Consider an edge $e$ in~$f$, let $d$, $d'$ be the two darts of~$e$. 
  If $e$ appears only once on $f$ then exactly one of  %
  $d$, $d'$ is element of~$f$. Hence, only one of them can be the root.
  If $e$ appears twice on $f$ then both $d$ and~$d'$ can serve as the root.
  Hence, $k h_k \leq g_k \leq 2 k h_k$. 
\end{proof}

In the following lemmas, we show quite tight upper and lower bounds on $g_k$ that will be close to $(2m-1)(2m-3)(2m-5)\dots(2m-2k+1)$.
We will compute how many options there are to construct a rooted face with $k$ unique edges by fixing $L$ step-by-step.
We will look at the darts of one face $f$ of unique length $k$ in the order given by $R$ and~$L$, starting with the root of~$f$ denoted as~$d_1$.
More precisely, we say that darts $d_1, d_2, \ldots, d_{2k}$ form a \emph{rooted unique sequence} for some rooted face~$f$ with $u(f)=k$ 
if they are the sequence of darts in order of appearance in $f \cup L(f)$ 
starting with root $d_1$ and $d_2=L(d_1)$ excluding any repeats (obtained by traversing an edge the second time).

A part of a rooted unique sequence $d_1,d_2,\ldots,d_{2i}$ for $1\le i\le k<u(f)$ can be viewed as a
partial face starting with $d_1$ and leading to $d_{2i}$. %

To make our description of constructed faces more succinct, we define the following notation. 
We define $\Uforw(d)$ as the first unpaired dart in the sequence $R(d)$, $RLR(d)$, $(RL)^2R(d)$, \dots\ 
and similarly 
$\Uback(d)$ as the first unpaired dart in the sequence $R^{-1}(d)$, $R^{-1}LR^{-1}(d)$, $(R^{-1}L)^2R^{-1}(d)$, \dots\ 
See Figure~\ref{fig:Uforwback}. 

Note that the definition of $\Uback$ and $\Uforw$ depends on $L$, thus is being updated during the proof. 
Also note that $\Uback(d)$ is defined when $d$ is unpaired or $d = d_j$ for $j$ even, 
while $\Uforw(d)$ is defined when $d$ is unpaired or $d = d_j$ for $j$ odd. 

\begin{figure}
    \includegraphics[scale=1]{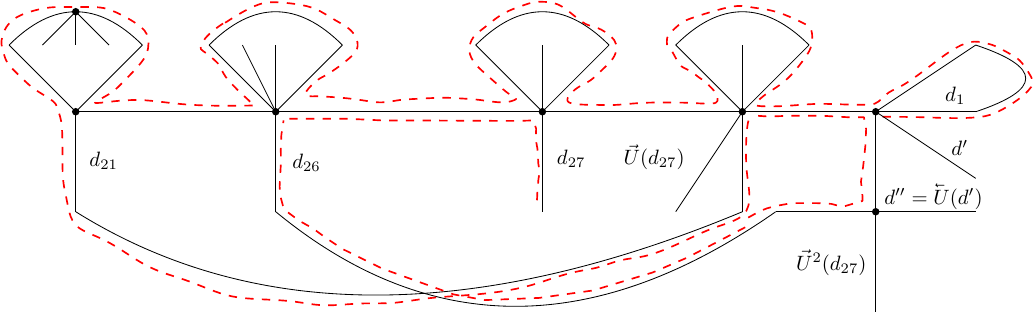}
    \caption{An illustration of the definition of~$\Uforw$, $\protect\Uback$.} 
    \label{fig:Uforwback}
\end{figure}

\begin{figure}
     \centering
     \begin{subfigure}[b]{\textwidth}
         \begin{minipage}{0.42\textwidth}
         \includegraphics[width=\textwidth]{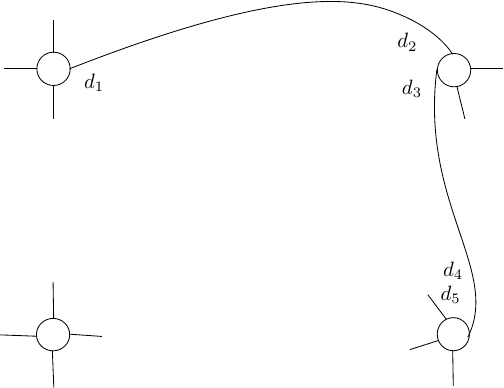}
       \end{minipage}\unskip\hspace{3em}
       \begin{minipage}{0.45\textwidth}
         \caption{There are $16$ total darts, so there are $16$ choices for $d_1$.  There are then $15$ choices for $d_2$.  Once $d_2$ is chosen, $d_3$ must be the next dart in the rotation system at that vertex.  There are $13$ choices for $d_4$.}
         \label{fig:ub1}
       \end{minipage}
     \end{subfigure}
     \\\bigskip
     \begin{subfigure}[b]{\textwidth}
         \begin{minipage}{0.42\textwidth}
         \includegraphics[width=\textwidth]{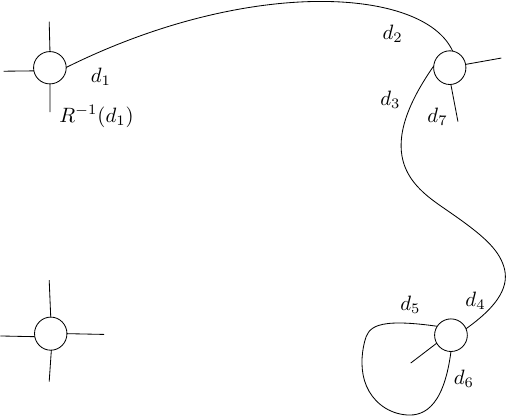}
       \end{minipage}\unskip\hspace{3em}
       \begin{minipage}{0.42\textwidth}
         \caption{There are 11 choices for $d_6$, and once this is chosen $d_7 = \Uforw(d_6)$ 
           is determined as shown in the diagram. Now suppose that $k=4$. 
           Then there is precisely one choice of $d_8$ which completes this walk into a face, and
         that choice is $R^{-1}(d_1)$.}
         \label{fig:ub2}
       \end{minipage}
     \end{subfigure}
     \\\bigskip
     \begin{subfigure}[b]{\textwidth}
         \centering
         \begin{minipage}{0.42\textwidth}
         \includegraphics[width=\textwidth]{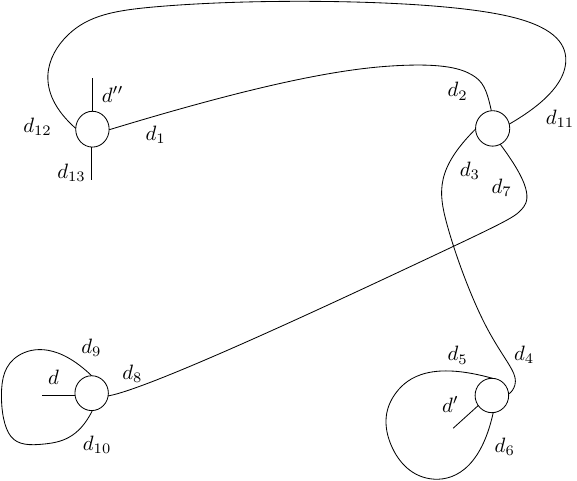}
       \end{minipage}\unskip\hspace{3em}
       \begin{minipage}{0.45\textwidth}
         \caption{We give an example of a sequence $d_1,\allowbreak d_2,\dots,d_{13}$ for $k=7$.  In this case, any choice of $d_{14}$ will close this walk into a face.  This is because $d$, $d'$ and $d''$ are exactly the $1$-open darts in this partial map.}
        \phantomsection\label{fig:ub3}
       \end{minipage}
     \end{subfigure}
    \caption{An illustration of the argument in Lemma \ref{lem:ubound}.}
    \label{fig:gkupper}
\end{figure}

\begin{lemma}[Upper-bound on $g_k$]\label{lem:ubound}
  We have $g_1 = 2m$.
  For $2\le k \le m$,
  \[g_k \leq 2m(2m-1)(2m-3)(2m-5)\dots(2m-2k+3).\]
\end{lemma}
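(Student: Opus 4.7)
The plan is to count rooted unique sequences $(d_1, d_2, \ldots, d_{2k})$ directly, using the fact that each rooted face of unique length $k$ is in bijection with such a sequence (its darts in the order of first appearance along the facial walk). I will build the sequence dart by dart, bound the number of choices at each stage, and multiply.

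For the base case $k = 1$, a rooted face of unique length one consists of a single edge whose two darts form a length-one orbit of $R\circ L$, together with a choice of root. The root $d_1$ can be any of the $2m$ darts, and it determines the face completely (namely $d_2 = L(d_1)$ is forced to be the unique dart making $R\circ L$ fix $d_1$), so $g_1 \le 2m$, with equality realized as $L$ ranges over $C_{2^m}$.

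For $k \ge 2$, the step-by-step accounting goes: $d_1$ contributes $2m$ choices; $d_2 = L(d_1)$ contributes at most $2m-1$ choices (any remaining dart); then for $i = 2, \ldots, k-1$, the odd-indexed dart $d_{2i-1} = R(d_{2i-2})$ is forced by the fixed rotation scheme, while $d_{2i} = L(d_{2i-1})$ can be any dart not yet appearing in the sequence, contributing at most $2m-(2i-1)$ choices; and $d_{2k-1} = R(d_{2k-2})$ is again forced. The remaining task is to show that the closing dart $d_{2k}$ admits at most $2$ valid choices. Here the structural reason I would exploit is that the facial walk either closes directly, forcing $d_{2k} = R^{-1}(d_1)$, or it closes through a chain of re-traversals of already-placed edges; the analysis of the 1-open darts of the partial map leaves at most one alternative beyond the direct closure.

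The main obstacle is rigorously establishing this $\le 2$ bound on $d_{2k}$. The difficulty is that the partial map may carry several 1-open darts (as illustrated by Figure~3 for $k=7$), yet one must argue that among all ways of pairing $d_{2k-1}$ with an unused dart, only two produce a walk that returns to $d_1$ exclusively through already-placed edges. This requires tracking how re-traversals propagate through the pinch points of the partial face and verifying, by a case analysis on the local configuration at each 1-open dart, that any second valid closure beyond the direct one is determined uniquely by the structure. Once this step is in place, multiplying the per-step bounds yields $g_k \le 2\cdot(2m)(2m-1)(2m-3)\cdots(2m-2k+3)$, as claimed.
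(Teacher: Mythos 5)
The per-step choice bounds you set up (the $2m \cdot (2m-1) \cdot (2m-3)\cdots(2m-2k+3)$ prefactor) match the paper exactly. The gap is entirely in the final step, and it is a genuine one: the deterministic bound of ``at most $2$ valid choices for $d_{2k}$'' is false. Figure~\ref{fig:ub3} (the very figure you cite) exhibits a partial sequence $d_1,\dots,d_{13}$ with $k=7$ and $2m=16$ where \emph{all three} remaining darts close the walk into a face of unique length $7$, because the partial map has three $1$-open darts. No case analysis on the local configuration can repair this, since the number of $1$-open darts at step $k-1$ can in general be any nonnegative integer, and each $1$-open dart gives a distinct valid closure.

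What the paper proves instead is an \emph{averaging} statement: letting $O_i$ denote the number of $1$-open darts after the first $i$ edge-choices, one shows $\E[O_i]\le 1$ by a simple recursion. At each step, among the $2m-2i+1$ admissible choices for $d_{2i}$, at most one (namely $U^2(d_{2i-1})$) can create a new $1$-open dart, while exactly $O_{i-1}$ of them destroy an existing one; this yields
\[
\E[O_i] \le \E[O_{i-1}]\Bigl(1 - \tfrac{1}{2m-2i+1}\Bigr) + \tfrac{1}{2m-2i+1} \le 1.
\]
Since $g_k$ is precisely the sum, over all partial sequences $d_1,d_2,d_4,\dots,d_{2k-2}$, of the number of valid closing choices, and that number is at most $1$ in Case~1 plus $O_{k-1}$ in Case~2, the bound on the \emph{expectation} $\E[O_{k-1}]\le 1$ suffices to bound the total by $2$ times the number of partial sequences. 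So the factor of $2$ is an on-average statement, not a per-configuration one; your proposal conflates the two, and the per-configuration version you would need does not hold.
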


\begin{proof}
  If $k=1$ then there are $2m$ choices for $d_1$.
  Then, in order to close a face with $d_2$, we have only one choice.
  Now, suppose that $k\ge 2$. Again, we have~$2m$ choices for~$d_1$.
  Then there are at most $2m-1$ choices for~$d_2$. However, one of them 
  closes a face of length~$1$, thus we have only $2m-2$ choices. 
  As $R$ is fixed, $d_3$ is determined. 

Now suppose that the sequence of darts is currently $d_1,d_2, \dots, d_{2i-1}$,
i.e., the first $2i-1$ darts of a rooted unique sequence are fixed, for $1\le i\le k-1$.
We then have at most $2m-(2i-1)$ choices for $d_{2i}$, as it cannot be any of the previous darts in the facial walk. 
Moreover one of those choices is $\Uback(d_1)$ which closes a face of length~$i$, thus 
we have only~$2m-2i$ choices. 
We follow the facial walk from $d_{2i}$ to $R(d_{2i})$ and possibly further (if this 
dart was already visited): we put $d_{2i+1} \coloneqq \Uforw(d_{2i})$. 

When we are about to choose the final dart~$d_{2k}$, we split into two cases.

\smallskip
\noindent \textbf{Case 1:} $d_{2k-1} \neq \Uback(d_1)$.~~\phantomsection \label{gkub:c1}
In this case, there is at most one choice of $d_{2k}$ which closes this facial walk into a face of unique length $k$.
This choice is setting $d_{2k} := \Uback(d_1)$.

\smallskip
Observe that in \hyperref[gkub:c1]{Case 1}, the last unique edge $d_{2k-1},d_{2k}$ always appears on $f$ only once. 
However, this does not need to be always the case; see \cref{fig:ub3} for such an example.
There, in particular, it is not true that we have at most one choice when choosing the last edge. 

\smallskip
  \noindent\textbf{Case 2:} $d_{2k-1} = \Uback(d_1)$.~~\phantomsection \label{gkub:c2}
Let $L$ be the permutation consisting of 2-cycles on edges defined by our choices of $d_1,d_2,\ldots,d_{2k-1}$.
A dart $d\notin\{d_1,d_2,\ldots,d_{2k-1}\}$ is called \emph{1-open} if $d\neq d_{2k-1}$ is the only unpaired dart on a 1-open temporary face in $R\circ L$.
  Observe that by choosing $d_{2k}$ we can close the face if and only if $d_{2k}$ is 1-open dart.
We therefore need an upper bound on the number of $1$-open darts.

To proceed with our calculations, let $N_i$ be the number of partial faces 
$f = (d_1,d_2, \dots, d_{2i-1})$ that are not yet closed (that is all darts in it are distinct). 
Further, let $S_i$ be the number of pairs $(f,d)$ where $f$ is a partial face
as above and $d$ is a $1$-open dart for~$f$. 

In Case~1 we have at most one way to close the possible face, this gives us at most~$N_k$ faces. 
In Case~2 we have at most one way for each 1-open dart, this gives us at most~$S_k$ faces. 
(We are overestimating, as $S_k$ also counts partial faces that lead to Case~1 and $N_k$ faces leading to Case~2.) 
Thus, we only need to estimate $N_k+S_k$. To this end, we first observe that $N_1 = 2m$ and $S_1 = 0$. 
We also have the following recurrence 
\begin{align}
    N_{i+1} &\le N_i \cdot (2m-2i) \label{eq:Ni} \\
    S_{i+1} &\le S_i \cdot (2m-2i-1) + N_i. \label{eq:Si} 
\end{align}
As explained above, we can extend $f=(d_1, \dots, d_{2i-1})$ by $d_{2i}$ that is distinct from the previous $2i-1$ darts 
and from $\Uback(d_1)$; this gives us equation~\eqref{eq:Ni} ($d_{2i+1}$ is defined uniquely). 
For equation~\eqref{eq:Si} we observe that when we extend $f$ to $f'$ as just described, the pairs 
$(f,d)$ can be extended to $(f',d)$, unless $d_{2i}=d$. This explains the first term in the equation. 
The other one counts pairs $(f',d)$ where $d$ is $1$-open in~$f'$ but not in~$f$. 
There is exactly one such pair for each~$f$, namely putting $d_{2i} = \Uforw^2(d_{2i-1})$, which will create 
a new $1$-open dart $d = \Uforw(d_{2i-1})$. 

Adding \eqref{eq:Ni} and~\eqref{eq:Si} we get $N_{i+1} + S_{i+1} \le (N_i+S_i) \cdot (2m-2i+1)$. 
Consequently, $N_k+S_k \le 2m(2m-1)\dots(2m-3)$, finishing the proof. 
\end{proof}

  \newcommand{\dbad}{d'}
  \newcommand{\dbadprev}{d''}
  \newcommand{\nullElem}{\emptyset}

\begin{lemma}[Lower-bound on $g_k$]\label{lem:lbound}
  $g_1 = 2m$.
  For $2\le k \le m$,
  \[g_k \geq 2m(2m-4)(2m-6)(2m-8)\dots(2m-2k).\]
\end{lemma}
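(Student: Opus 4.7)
The natural approach is to mirror the upper-bound argument in Lemma \ref{lem:ubound} in reverse, building valid rooted unique sequences $d_1,d_2,\ldots,d_{2k}$ step-by-step and bounding from below the number of valid choices available at each step. The base case $k=1$ is immediate: any of the $2m$ darts may be chosen as $d_1$, and the face consisting of the single edge $\{d_1,L(d_1)\}$ is automatically closed for any choice of $L(d_1)$, so $g_1=2m$. For $k\ge 2$, the structure of the product $(2m)(2m-4)(2m-6)\cdots(2m-2k)$ strongly suggests that, after fixing $d_1$ with $2m$ options, at each subsequent step $i\in\{2,\dots,k\}$ we obtain a factor of $(2m-2i)$ by picking $d_{2i}$ from among the darts that avoid (a)~the $2i-1$ previously appearing darts, plus (b)~at most one additional "bad" dart per step whose choice would collapse the walk before reaching unique length $k$ (for example the dart $U^{-1}(d_1)$ from Lemma \ref{lem:ubound}).

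The plan is therefore as follows. First, fix $d_1$ and note $d_2=L(d_1)$ is the next chosen dart; then for each $i\ge 2$ the dart $d_{2i-1}$ is \emph{forced} by the fixed rotation $R$ (as the first previously-unseen dart obtained by iterating $R$ from $d_{2i-2}$, skipping repeats). We then choose $d_{2i}$ from the set $D\setminus\bigl(\{d_1,\ldots,d_{2i-1}\}\cup B_i\bigr)$, where $B_i$ is a small "bad set" of at most a constant number of forbidden darts. Specifically $B_i$ should contain the unique dart that would immediately close the partial face using the partial matching $L$ built so far (an analogue of Case~2 in Lemma \ref{lem:ubound}), and potentially one or two more darts that prevent degenerate configurations (loops collapsing, double-edges aligning with $R^{-1}(d_1)$, etc.). A careful bookkeeping should show $|B_i|\le 1$ and hence give at least $2m-2i$ good choices at step~$i$. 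Finally, for every resulting sequence $d_1,\ldots,d_{2k}$ we need to exhibit an explicit completion of $L$ on the remaining $2(m-k)$ darts that realises this as the rooted unique sequence of a face in $\Phi_R$; since the complement has even cardinality, such a completion always exists, and by construction the orbit of $d_1$ under $R\circ L$ cycles exactly through the $k$ established edges.

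The main obstacle will be the \emph{last step}, $i=k$: here we need $d_{2k}$ not merely to avoid forbidden darts but to actually close the partial facial walk back to $d_1$ using only the $k$ edges established so far. In the upper-bound proof this was handled by the 1-open dart analysis, giving an upper bound on closing choices. For the lower bound one needs the \emph{opposite} inequality: to exhibit enough distinct $d_{2k}$ whose insertion genuinely produces a face of unique length exactly $k$. The cleanest route is probably to parametrise the closing step by the identity of a suitable unpaired "partner" dart at the vertex incident to $R^{-1}(d_1)$, and to argue that each of the $\ge 2m-2k$ remaining darts gives rise (after a canonical completion of $L$) to a distinct valid rooted face; symmetry of the two directions of the face, together with \cref{obs:ftog}, then yields the stated lower bound. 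Alternatively, one may invoke a double-counting argument that avoids analysing the terminal step directly, by separately counting ordered pairs $(f,d)$ over all $L\in C_{2^m}$ and using the fact already established in Lemma \ref{lem:completion} that each unique-length-$k$ face is shared among $|C_{2^{m-k}}|$ embeddings.
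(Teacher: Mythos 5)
Your overall framing is right---you recognize that the lower bound should come from a greedy construction of rooted faces, step by step, and you correctly identify the terminal step as the crux. But the proposal leaves the decisive mechanism unstated and contains a bookkeeping error that would prevent you from obtaining the stated product.

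The key idea in the paper's proof is an \emph{invariant}, not a count on closing options. Set $d'\df R^{-1}(d_1)$ and keep a companion dart $d''\df U^{-1}(d')$; at every even-indexed choice, forbid $d'$, $d''$, and the single dart $d^o=U^2(d_{2i-1})$ whose selection would create a $1$-open face. Forbidding $d''$ ensures you are never \emph{forced} to traverse $d'$ along an odd step, and forbidding $d^o$ ensures no $1$-open face ever appears (which could otherwise force $d''$ and hence $d'$). Maintaining this invariant guarantees that when you reach step $k$, the single choice $d_{2k}=d'$ always exists and closes the walk into a face of unique length exactly $k$. That is what makes the terminal step a non-issue: you do not need many closing darts, you need exactly one \emph{guaranteed} one. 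Your proposed ``parametrise the closing step'' route, which aims at $\ge 2m-2k$ distinct closures, is misdirected---as the upper-bound analysis already shows, the number of closing choices is at most $O(1)$ (in expectation $\le 2$), so no lower bound of that magnitude is available there.

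Two further problems. First, your claim that $|B_i|\le 1$ suffices is arithmetically inconsistent with the target product. After $d_1$ you need factors $2m-4,2m-6,\dots,2m-2k$; at the step where $d_{2i}$ is chosen you have already seen $2i-1$ darts, so to obtain the factor $2m-(2i+2)$ you must allow yourself to discard \emph{three} additional darts (this is exactly $d'$, $d''$, $d^o$), i.e.\ $|B_i|\le 3$. Your text even says ``one or two more darts,'' contradicting your own $|B_i|\le 1$. Second, the proof requires a genuine repair step that your plan omits: $d''$ \emph{can} appear as an odd-indexed dart $d_{2i+1}$ (it is forced by $R$, not chosen), in which case $d''$ must be redefined (to $\emptyset$ or to the new $U^{-1}(d')$, depending on whether the new vertex has degree~$2$) so that the invariant is preserved without costing any further choices. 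Without this redefinition, the bound on $|B_i|$ does not hold uniformly. Finally, the double-counting alternative you mention only re-derives the identity $\sum_L(\text{rooted faces of unique length }k)=g_k\cdot|C_{2^{m-k}}|$; it provides no independent lower bound on the left-hand side, so it does not avoid the combinatorial argument.
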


We refer to Figures \ref{fig:gklower1} and \ref{fig:gklower2} for an illustration of arguments used in the proof of \cref{lem:lbound}.

\begin{proof}
There are $2m$ darts in total, and therefore, $2m$ choices for $d_1$.
Let $\dbad \df R^{-1}(d_1)$ and $\dbadprev \df \Uback(\dbad)$. %
We will estimate the number of choices of $d_{2i}$ for $i = 1, \dots, k$ (darts 
$d_{2i+1}$ are forced by the given rotations and partially constructed $L$). To keep the estimates manageable, 
we restrict our choices somewhat:  we will keep the following invariants true during our process of selecting the face: 
\begin{itemize}
    \item $\dbad \ne d_j$ for $j < 2k$ (as we want $\dbad = d_{2k}$)
    \item $\dbadprev \ne d_{2i}$ (as this would force $\dbad=d_{2i+1}$) 
    \item there are no 1-open faces %
       (otherwise, when $d_{2i-1}=\dbadprev$ then choosing 1-open dart as $d_{2i}$ would lead to $d_{2i+1}=\dbad$) 
\end{itemize}
However, we will not prevent $d_{2i+1}$ to be chosen as $\dbadprev$. 
If this happens, we will redefine $\dbadprev$ so that $\dbad$ is still available to be picked as $d_{2k}$, 
that is we again put $\dbadprev \df \Uback(\dbad)$ (which changes meaning, as the 
previously-defined $\dbadprev$ is no longer unpaired). 
In addition to the above, when selecting $d_{2i}$ we will forbid $d_{2i} = d^{o}\df \Uforw^2(d_{2i-1})$ 
which is the only choice that can create $1$-open face when paired with $d_{2i-1}$.
Consult \cref{fig:lb1} for the illustration.

Next, we estimate the number of choices along the way. The choices $d_1,\dbad,\dbadprev,d^o$ are not allowed for $d_2$.  
We therefore have at least~$2m-4$ choices for $d_2$.
Now suppose that the sequence of darts is currently $d_1,d_2, \dots, d_{2i}$, i.e., the first $2i$ darts of a rooted unique sequence are fixed, for $1\le i\le k-2$.
We also suppose that $\dbad$ and $\dbadprev$ are not among $d_1,d_2, \dots, d_{2i}$ and no $1$-open darts were created.
We define $d_{2i+1} \df \Uforw(d_{2i})$ as the next dart in the rooted unique sequence.
This may force $d_{2i+1}=\dbadprev$. %
We then have at least $2m-(2i+4)$ choices for $d_{2i+2}$, as it cannot be any of the previous darts in the facial walk 
and we do not allow $\dbad$, $\dbadprev$ or $d^o$ as choices.
(We have one more choice in case $d_{2i+1}=\dbadprev$, but we don't use this in our estimate.) 
Consult Figures \ref{fig:lb2}, \ref{fig:lb3}, and \ref{fig:lb4} for the illustration.

\begin{figure}
     \centering
     \begin{subfigure}[b]{\textwidth}
         \begin{minipage}{0.45\textwidth}
         \includegraphics[width=\textwidth]{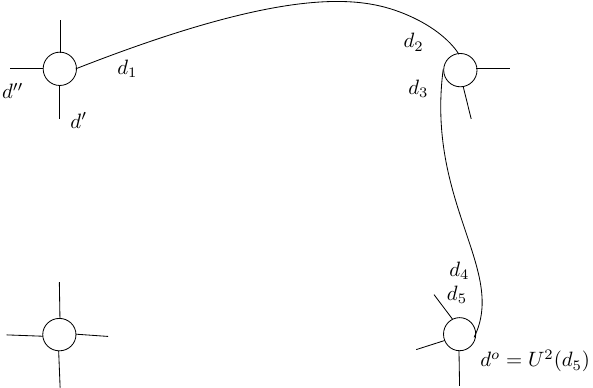}
       \end{minipage}\unskip\hspace{3em}
       \begin{minipage}{0.45\textwidth}
         \caption{
           From the start of the process, $d'$ and $d''$ are set as shown in the
           top picture. We disregard the choices to pair $d_i$ with $d'$, $d''$,
           and $d^o=\Uforw^2(d_5)$ (this option would create 1-open dart $\Uforw(d_5)$) as
           described.
       }
       \label{fig:lb1}
       \end{minipage}
     \end{subfigure}
     \\\bigskip
     \begin{subfigure}[b]{\textwidth}
         \begin{minipage}{0.45\textwidth}
         \includegraphics[width=\textwidth]{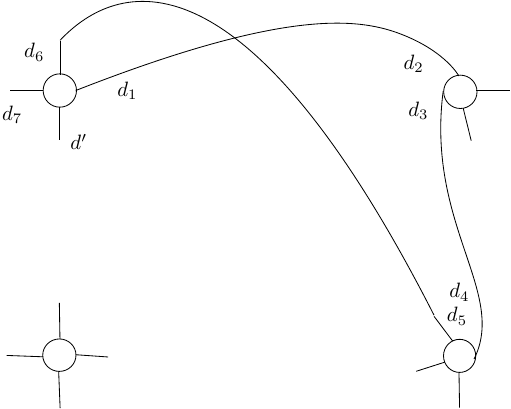}
       \end{minipage}\unskip\hspace{3em}
       \begin{minipage}{0.45\textwidth}
         \caption{Darts $d'$ and $d''$ stay the same until we choose $d_6$ to be $R^{-1}(d'')$. Then $d''$ became $d_7$, 
         effectively giving use one more choice for~$d_8$.}
         \label{fig:lb2}
       \end{minipage}
     \end{subfigure}
     \\\bigskip
     \begin{subfigure}[b]{\textwidth}
         \centering
         \begin{minipage}{0.45\textwidth}
         \includegraphics[width=\textwidth]{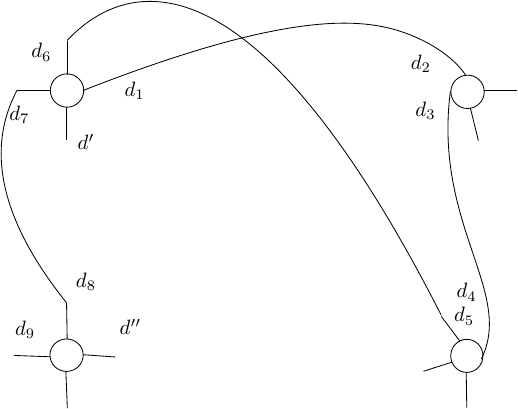}
       \end{minipage}\unskip\hspace{3em}
       \begin{minipage}{0.45\textwidth}
         \caption{%
         Once we choose $d_8$, we let $d''=\Uback(d')$. This happens unless degree of vertex where $d_8$ was chosen has degree two.}
         \label{fig:lb3}
       \end{minipage}
     \end{subfigure}
    \caption{An illustration of the argument in \cref{lem:lbound}. (Part I.)}
    \label{fig:gklower1}
\end{figure}
\begin{figure}
     \begin{subfigure}[b]{\textwidth}
         \centering
         \begin{minipage}{0.45\textwidth}
         \includegraphics[width=\textwidth]{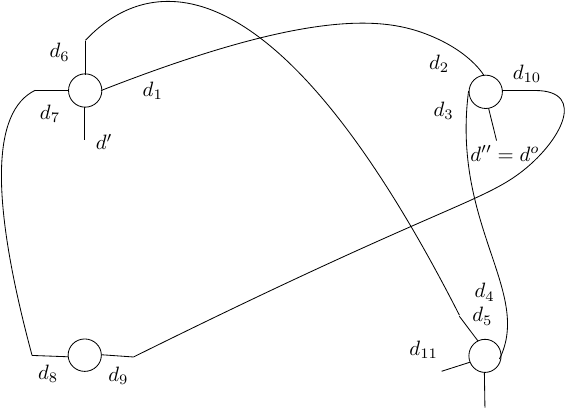}
       \end{minipage}\unskip\hspace{3em}
       \begin{minipage}{0.45\textwidth}
         \caption{In the case a vertex incident with $d_8$ has degree two then $d''$ stays $d_7$.
         Later on, when $d_{10}$ is chosen, then $d''$ is set as $\Uback(d')$. Moreover, observe that $d''$ is a forbidden choice due to another reason: the same dart is also $d^o=\Uforw^2(d_{11})$.}
         \label{fig:lb4}
       \end{minipage}
     \end{subfigure}
     \caption{An illustration of the argument in \cref{lem:lbound} (Part II.).}
     \label{fig:gklower2}
\end{figure}

After our facial walk passes through $k-1$ distinct edges we put, as before, $d_{2k-1} \df \Uforw(d_{2k-2})$
and finish the face by putting $d_{2k} \df \dbad$. 
Thanks to our invariants this choice is valid and closes the face after exactly $k$ unique edges were determined.
\end{proof}

In the estimate above, besides the obvious loss of not counting $\dbadprev$, we do not count the option that the last edge appears twice in the face. 

We are now ready to put these lemmas together to get the final result which is \cref{thm:multigraphs} with specified constants.

\begin{theorem} \label{thm:multi:main}
    Let $m$ denote the number of edges in the multigraph with degree sequence $\bf d$. Then
    \[ \frac{1}{2}\left(H_m-1\right) \leq \E(F_{\mathbf d}) \leq 2 \, H_m + 2.\]
\end{theorem}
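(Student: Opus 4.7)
The plan is to directly substitute the previously-derived bounds on $g_k$ (Lemmas \ref{lem:ubound} and \ref{lem:lbound}) into the relation $g_k/(2k) \le h_k \le g_k/k$ of Observation \ref{obs:ftog}, and then into the master identity
\[ E(F_{\bf d}) = \sum_{k=1}^m \frac{h_k}{(2m-1)(2m-3)\cdots(2m-2k+1)} \]
of Lemma \ref{lem:hk}. The whole point is that the denominator is almost exactly what appears in the bounds on $g_k$, so enormous cancellation happens and each summand reduces to a simple closed form.

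For the \emph{upper} bound, Lemma \ref{lem:ubound} gives, for $k \ge 2$,
\[ \frac{g_k}{(2m-1)(2m-3)\cdots(2m-2k+1)} \le \frac{2(2m)(2m-1)(2m-3)\cdots(2m-2k+3)}{(2m-1)(2m-3)\cdots(2m-2k+1)} = \frac{4m}{2m-2k+1}, \]
so the $k$-th summand is at most $\tfrac{4m}{k(2m-2k+1)}$. I would then split this by partial fractions as $\tfrac{4m}{2m+1}\bigl(\tfrac{1}{k} + \tfrac{2}{2m-2k+1}\bigr) \le 2/k + 4/(2m-2k+1)$. The first part sums to $2(H_m-1)$; for the second, substituting $j=m-k+1$ rewrites $\sum_{k=2}^m \tfrac{1}{2m-2k+1} = \sum_{j=1}^{m-1}\tfrac{1}{2j-1}$, which is bounded by $1 + \tfrac{1}{2}H_{m-2}$ using $\tfrac{1}{2j-1} \le \tfrac{1}{2(j-1)}$ for $j\ge 2$. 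Together with the trivial bound $h_1/(2m-1) \le 2m/(2m-1) \le 2$ for $k=1$, this yields $E(F_{\bf d}) \le 4H_m + 4$.

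For the \emph{lower} bound, Lemma \ref{lem:lbound} and the monotone estimate $(2m-2i)/(2m-2i+1) \ge (2m-2i)/(2m-2i+2) = (m-i)/(m-i+1)$ make the ratio telescope:
\[ \frac{g_k}{(2m-1)(2m-3)\cdots(2m-2k+1)} \ge \frac{2m}{2m-1}\prod_{i=2}^{k}\frac{m-i}{m-i+1} \ge \frac{m-k}{m-1}, \]
so the $k$-th summand is at least $\tfrac{m-k}{2k(m-1)}$ for $k\ge 2$. Summing $\frac{m}{2(m-1)}\bigl(H_{m-1}-1\bigr) - \frac{m-2}{2(m-1)}$ and adding the $k=1$ contribution $m/(2m-1)$, one rearranges the result as $\tfrac{H_m-1}{2} + \tfrac{1}{2(2m-1)} + \tfrac{1}{2m(m-1)} \ge \tfrac{H_m-1}{2}$, using $H_{m-1} = H_m - 1/m$.

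The main obstacle is purely bookkeeping: keeping the additive constants under control in the upper bound (in particular pushing the odd-harmonic sum into $\tfrac{1}{2}H_m + O(1)$ rather than $H_m$) and, in the lower bound, checking that the small leftover positive terms $\tfrac{1}{2(2m-1)}+\tfrac{1}{2m(m-1)}$ really do compensate for the $-\tfrac{1}{2m}$ loss incurred when replacing $H_{m-1}$ by $H_m-1/m$. No new probabilistic ideas are needed — the entire proof is a deterministic telescoping computation made possible by the fact that both $g_k$ bounds produce products of the exact factors $2m-2k+\text{odd}$ that appear in the denominator of Lemma \ref{lem:hk}.
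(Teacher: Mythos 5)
Your proposal follows the same skeleton as the paper's proof: plug the bounds on $g_k$ from Lemmas~\ref{lem:ubound} and~\ref{lem:lbound} through Observation~\ref{obs:ftog} into Lemma~\ref{lem:hk}, exploit the near-perfect cancellation of the double factorial, and sum. The arithmetic details diverge mildly. For the upper bound you use an exact partial-fraction split of $\tfrac{4m}{k(2m-2k+1)}$ plus a bound on the odd harmonic sum, whereas the paper simply drops the $+1$ (replacing $2m-2k+1$ by $2m-2k$) for $k<m$ and absorbs the $k=m$ term into the constant; both land at $4H_m+4$. For the lower bound you telescope $\prod_{i\ge 2}\tfrac{2m-2i}{2m-2i+1}$ down to $\tfrac{m-k}{m-1}$, which is slightly sharper than the paper's one-step truncation to $\tfrac{m-k}{m}$; the paper's choice is tidier because $\sum_{k=1}^m \tfrac{1}{2k}\cdot\tfrac{m-k}{m} = \tfrac{1}{2}(H_m-1)$ exactly, so no separate treatment of $k=1$ is needed.

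One small slip: the exact identity you assert in the lower bound, namely that the sum equals $\tfrac{H_m-1}{2} + \tfrac{1}{2(2m-1)} + \tfrac{1}{2m(m-1)}$, is false (check $m=3$: the left side is $\tfrac{29}{40}$, your expression is $\tfrac{3}{5}$). The correct simplification gives $\tfrac{H_{m-1}-1}{2} + \tfrac{H_{m-1}}{2(m-1)} + \tfrac{1}{2(2m-1)}$, and the inequality $\tfrac{H_{m-1}}{2(m-1)} + \tfrac{1}{2(2m-1)} \ge \tfrac{1}{2m}$ (true since $H_{m-1}\ge 1$ for $m\ge 2$) is what finishes the job. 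Since your true quantity is in fact larger than the expression you claimed, your conclusion $E(F_{\bf d}) \ge \tfrac{1}{2}(H_m-1)$ stands; the error is cosmetic, not structural.
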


\begin{proof}
  In both estimates we use \cref{lem:hk} as a base for computation of
  $\E(F_{\mathbf d})$ and \cref{obs:ftog} that compares $h_k$ with $g_k$.

  For the lower bound, we start of with estimate on $g_k$ given by \cref{lem:lbound} and conclude by the following computation:
  \begin{align*}
  \frac{1}{2}\left(H_m-1\right)  &\leq \sum_{k=1}^m \frac{1}{2k} \frac{m-k}{m} \leq \sum_{k=1}^m \frac{1}{2k} \frac{2m(2m-2k)}{(2m-1)(2m-3)} \\
  &\leq \sum_{k=1}^m \frac{1}{2k} \frac{(2m)(2m-4)(2m-6)(2m-8)\dots(2m-2k)}{(2m-1)(2m-3)(2m-5)\dots(2m-2k+1)}  \\
   &\leq \sum_{k=1}^m \frac{1}{2k} \frac{g_k}{(2m-1)(2m-3)(2m-5)\dots(2m-2k+1)} \\
   &\leq \sum_{k=1}^m \frac{h_k}{(2m-1)(2m-3)(2m-5)\dots(2m-2k+1)} = \E(F_{\mathbf d}).
  \end{align*}
    
  For the upper-bound we use the estimate on $g_k$ given by \cref{lem:ubound} and we conclude that: %
  \begingroup
\allowdisplaybreaks
  \begin{align*} 
  \E(F_{\mathbf d}) &= \sum_{k=1}^m \frac{h_k}{(2m-1)(2m-3)(2m-5)\dots(2m-2k+1)} \\
   &\leq \sum_{k=1}^m \frac{1}{k} \frac{g_k}{(2m-1)(2m-3)(2m-5)\dots(2m-2k+1)} \\
   &\leq \sum_{k=1}^m \frac{1}{k} \frac{(2m)(2m-1)(2m-3)(2m-5)\dots(2m-2k+3)}{(2m-1)(2m-3)(2m-5)\dots(2m-2k+1)} 
   = \sum_{k=1}^m \frac{1}{k} \frac{2m}{2m-2k+1} \\
    &< 2 + \sum_{k=1}^{m-1} \frac{m}{k(m -k)} 
    = 2 + \sum_{k=1}^{m-1} \left(\frac{1}{k}+\frac{1}{m-k}\right) \le 2 + 2H_m.\qedhere
  \end{align*}
  \endgroup
\end{proof}

\cref{thm:multi:main} is a direct analogue of \cref{thm:randomgraphsmallp} for multigraphs with loops; in next 
section we revisit the question for simple graphs. 

\begin{corollary}
  Let $G$ be a random multigraph with degree sequence $\mathbf d$.
  Then the probability that the number of faces in a random embedding of $G$ is
  greater than  $c (\ln(m_d)+2)$ is less than $\frac{2}{c}$.
\end{corollary}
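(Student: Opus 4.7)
The plan is a direct application of Markov's inequality to the upper bound established in Theorem~\ref{thm:multi:main}. Rewriting that bound as $\E[F_{\bf d}] \le 4(H_m + 1)$, I would apply Markov's inequality to the nonnegative random variable $F$ with threshold $c(H_m + 1)$ to obtain
\[
  \Pr[F > c(H_m + 1)] \le \frac{\E[F_{\bf d}]}{c(H_m + 1)} \le \frac{4(H_m + 1)}{c(H_m + 1)} = \frac{4}{c}.
\]
The key algebraic observation is that the constant $4$ appearing in Theorem~\ref{thm:multi:main} cancels cleanly against the additive $+1$ inside the threshold, producing exactly the advertised factor $4/c$ without any further loss.

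To translate the threshold $H_m + 1$ into the form $\log(n) + 1$ that appears in the corollary, I would invoke the standard estimate $H_m \le \ln m + 1$, which is an immediate consequence of Theorem~\ref{thm:Hnconv}. Under the standing hypotheses of Theorem~\ref{thm:multigraphs} ($d_i \ge 2$ for every $i$, so $m \ge n$), and in the intended regime of bounded-degree multigraphs where $m = \Theta(n)$, the quantities $H_m + 1$ and $\log(n) + 1$ agree up to a multiplicative constant that can be absorbed into $c$. Thus the Markov estimate above transfers to the stated threshold with no essential loss.

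The proof carries no genuine technical difficulty: all the real work is done in Theorem~\ref{thm:multi:main}, and Markov's inequality converts the first-moment bound into a tail bound almost mechanically. The only mild subtlety is bridging $H_m$ and $\log(n)$, which is harmless given the implicit assumption of bounded average degree under which the corollary is naturally read.
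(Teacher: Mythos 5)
You take essentially the same route as the paper — Markov's inequality applied to the first-moment bound from \cref{thm:multi:main} — and your central calculation $\Pr[F>c(H_m+1)]\le 4/c$ is exactly right and is, in fact, spelled out more carefully than in the paper's terse proof. The one ingredient you skip that the paper does state is the observation that drawing a random multigraph with degree sequence $\bf d$ and then choosing a random embedding is the same as sampling a uniform element of $\mathcal M_{\bf d}$; this identification is needed because \cref{thm:multi:main} is a statement about uniform maps while the corollary is phrased about the two-stage process.

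The gap you flag --- converting the threshold $H_m+1$ to $\log(n)+1$ --- is genuine, and your proposed fix does not actually close it. Absorbing a multiplicative constant into $c$ is not permitted: the corollary asserts $\Pr[F>c(\log n+1)]<\tfrac{4}{c}$ with the constant $4$ explicit and $c$ a free parameter, and replacing $c$ by $cC$ yields $\tfrac{4C}{c}$, a weaker statement. Worse, the inequality points the wrong way. Since every $d_i\ge 2$, we have $m\ge n$, and therefore $H_m>\ln m\ge \ln n$ by \cref{thm:Hnconv}; consequently the stated threshold $c(\log n+1)$ is \emph{smaller} than $c(H_m+1)$, so Markov gives a looser tail bound at that threshold, not the same one. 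The estimate $H_m\le \ln m+1$ that you invoke bounds $H_m$ from above, whereas what is needed for a direct transfer is a bound from below by $\log n$. The clean consequence of Markov applied to \cref{thm:multi:main} is precisely $\Pr[F>c(H_m+1)]\le \tfrac{4}{c}$; reproducing the corollary as written with threshold $\log n+1$ and constant $4$ would require either reading ``$\log n$'' as ``$H_m$'' (or $\ln m$ up to an adjusted constant) or an additional hypothesis such as bounded degree, neither of which the corollary states. In fairness, the paper's own one-line proof slides past the same issue.
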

\begin{proof}
    Observe that picking a random multigraph with degree sequence $\mathbf d$ then randomly embedding it gives a uniform at random chosen element from $\mathcal{M}_{\mathbf d}$.  Therefore, the result follows from \cref{thm:multi:main} and Markov's inequality.
\end{proof}

\subsection{Random simple graphs}\label{sec:rand_simple}
Let us fix some notation to be used throughout this section.
Given a degree sequence ${\mathbf d} = (t_{1},t_{2}, \dots, t_{n})$,
let $m_{\mathbf d} \df \frac 12 \sum_i t_{i}$ and $\lambda_{\mathbf d} \df \tfrac{1}{2m_{\mathbf d}}\sum_{i=1}^n \binom{t_{i}}{2}$.
We omit the subscript when ${\mathbf d}$ is clear from the context. 
If $t_{i} \leq d$ for all $i$, we refer to ${\mathbf d}$ as a \emph{$d$-bounded degree sequence}. 
In this section, we prove the following theorem.

\thmsimple*

As in Section~\ref{sec:rand_multi}, we may and will fix a rotation system $R \in C_{\mathbf d}$.
Let $\mathcal{M}_{ \mathbf d}^s$ denote the collection of simple maps with the fixed rotation $R$.
 Let $\Phi^s(k)$ denote the collection of possible faces of unique length $k$ in $\mathcal{M}^s_{\mathbf d}$.
 Moreover, let $G(n,{\mathbf d})$ and $G^s(n,{\mathbf d})$  denote, respectively, the collection of multigraphs and the collection of simple graphs on $n$ vertices with degree sequence~${\bf{}d}$. Bender and Canfield~\cite{BC78} showed
 that a random multigraph with degree sequence ${\mathbf d}$ is simple with probability 
 $(1+o(1))e^{-\lambda_{\mathbf d}-\lambda_{\mathbf d}^2}$. 
 In particular, this tells us that
\begin{equation}\label{eq:numsimple} |G^s(n,{\mathbf d})| = (1+o(1))e^{-\lambda_{\mathbf d}-\lambda_{\mathbf d}^2}|G(n,{\mathbf d})|.
\end{equation}
We continue by using a theorem of Bollob\'as and McKay to determine the number of maps containing a given $f \in \Phi^s(k)$.
Index the vertices in our model by $\{v_1,v_2,\ldots,v_n\}$ so that vertex~$v_i$ has degree~$t_{i}$. 
We say that $v_iv_j \in E(f)$ if a dart incident to $v_i$ is paired with a dart incident to $v_j$ in the face $f$. For each $f \in \Phi^s$ we define 
\[ \mu_f (\mathbf{d}) \df \frac{1}{2m} \sum_{v_iv_j \in E(f)} t_{i}t_{j}.\]
The following is a special case of Theorem 1 from \cite{BM86} which we will reformulate as an analog of \cref{lem:completion} for simple graphs; see \cref{cor:bollobas} below. 
\begin{theorem}[Bollob\'as and McKay {\cite[proof of Theorem 1]{BM86}}, reformulated]\label{thm:BollobasMckay}
  For each $d\ge 2$ and for each $\varepsilon \in (0,\tfrac{1}{d})$ there is a function $\delta(n)=o(1)$. 
  Let ${\mathbf d}$ be a $d$-bounded degree sequence of length $n$ such that $m=m_{\mathbf d} \ge (1+\varepsilon) n$.
  Let $f$ be a face on degree sequence $f_1,\dots,f_n$  (i.e., degrees of~$v_i$ in~$f$ is~$f_i$). 
  Let $t_{i}' \coloneqq t_{i} - f_i$ for $i = 1, \dots, n$ and let $\mathbf{d'} = (t_{1}', \dots, t_{n}')$.  Then if we pick a map uniformly at random from those in $\mathcal{M}_{\mathbf d}$ which contain $f$, the probability that this map is simple is
  \begin{align}
    (1+\delta(n))e^{-\lambda_{\mathbf{d'}} - \lambda_{\mathbf{d'}}^2-\mu_f(\mathbf{d'})}. \label{eq:BM}
  \end{align}
\end{theorem}

Let us note that the statement of \Cref{thm:BollobasMckay} follows directly from Equation (2) within the proof of Theorem 1 in~\cite{BM86}.

We want to obtain a bound for the number of maps containing a face with unique length $k$, so we give the following simple corollary.

\begin{corollary} \label{cor:bollobas}
  For each $d\ge 2$ and for each $\varepsilon \in (0,\tfrac{1}{d})$ there is a function $\delta(n)=o(1)$. 
  Let ${\mathbf d}$ be a $d$-bounded degree sequence of length $n$ such that $m=m_{\mathbf d} \ge (1+\varepsilon) n$.
  If $k \le m/2$ then for any $f \in \Phi^s(k)$ the number of simple maps with degree sequence $\mathbf d$ that 
  contain~$f$ is at most $|C_{2^{m-k}}|$ and at least
  \[(1 + \delta(n))e^{-\binom{d}{2}-\binom{d}{2}^2-\frac{d^2}{2}}|C_{2^{m-k}}|.\]
\end{corollary}
\begin{proof}
  Let $f$ be a face on degree sequence $f_1,\dots,f_n$, let $t_{i}' = t_{i} - f_i$ for $i = 1, \dots, n$ and let 
  $\mathbf{d'} = t_{1}', \dots, t_{n}'$. 

  The number of (not necessarily simple) maps on degree sequence $\mathbf{d'}$
  is $|C_{2^{m-k}}|$ by \Cref{lem:completion}, proving the upper bound.

  For the lower bound, by \cref{thm:BollobasMckay} the probability of a map in
  $\mathcal{M}_{\mathbf d}$ containing $f$ being simple is
  $(1+\delta(n))e^{-\lambda_{\mathbf{d'}} - \lambda_{\mathbf{d'}}^2-\mu_f(\mathbf{d'})}$.  
  Since ${\mathbf d}$ is $d$-bounded, we have 
  $\lambda_{\mathbf{d'}} \leq \frac{1}{2m-2k} \sum_{i=1}^n \binom{d}{2} \leq \frac{n}{m} \binom{d}{2} \le \binom d2$.  
  Similarly,
  \[\mu_f(\mathbf{d'}) \leq \frac{1}{2m-2k} \sum_{v_iv_j \in E(f)} d^2 = d^2 \frac{k}{m} \le \frac{d^2}2. \qedhere\] 
\end{proof}

Recall that in the previous section we defined \hyperref[def:hk]{$h_k$} as the number of faces of unique length $k$, and \hyperref[def:gk]{$g_k$} as the number of rooted faces of unique length $k$.  We define a simple face as a face which has no loops or parallel edges in it.  Then we define $h_k^s$ as the number of simple faces of unique length $k$, and $g_k^s$ as the number of rooted simple faces of unique length $k$.
It is easy to observe that the analog of \cref{obs:ftog} holds even for the number of faces of simple graphs.

\begin{observation}\label{obs:ftog:simple}
  For each $k$, $1\le k\le m$, we have 
    $\frac{1}{2k}g^s_k \leq h^s_k \leq \frac{1}{k}g^s_k$.
\end{observation}

Next we prove a variant of \cref{lem:lbound} for simple graphs. 
Observe that $h_1^s=h_2^s=0$ as $t_i\ge 2$ for all $i$.

\begin{lemma} \label{lem:gklowersimple}
  Let $3\le k \le m - \tfrac{d^2}{2}$. Then
  \[h_k^s \geq \frac{1}{2k} 2m(2m-d^2)(2m-d^2-2)\dots(2m-d^2-2k+4).\]
\end{lemma}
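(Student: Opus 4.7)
The plan is to adapt the dart-by-dart construction in the proof of \cref{lem:lbound} by tightening the set of allowed choices at each step so that the resulting face is automatically simple (contains no loops and no multiple edges). We still build a rooted unique sequence $d_1,d_2,\ldots,d_{2k}$, where for odd indices $d_{2i+1}$ is forced by $R$ and for even indices $d_{2i+2}$ is the $L$-partner we choose. The multigraph proof forbids, at each step, the previously used darts together with the reserved darts $d',d'',d^o$; to enforce simplicity we further forbid every dart whose choice would immediately create a loop or a repeated edge inside the face being built.

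Concretely, when picking $d_{2i+2}$, let $v$ be the vertex carrying $d_{2i+1}$. We forbid every dart incident with $v$ (to prevent a loop at $v$) and every dart incident with a vertex that is already adjacent to $v$ via some earlier edge of the partial face (to prevent a multiple edge). Because the degree sequence is $d$-bounded and the partial face has stayed simple so far, $v$ has at most $d-1$ existing face-neighbors, giving at most $d$ forbidden vertices and hence at most $d^2$ freshly forbidden darts. Combined with the $2i$ darts already committed to the face since the root, this leaves at least $2m-d^2-2i$ valid choices for $d_{2i+2}$, which is exactly the factor appearing in the statement. The reservation of $d'=R^{-1}(d_1)$ (and of $d''$) proceeds as in \cref{lem:lbound}, so that $d_{2k}=d'$ remains available to close the face, and the hypothesis $k\le m-d^2$ keeps every factor positive. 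Multiplying over the $k-1$ steps yields
\[
g_k^s \;\ge\; 2m(2m-d^2)(2m-d^2-2)\cdots(2m-d^2-2k+4),
\]
after which the rooted-to-unrooted estimate $h_k^s\ge g_k^s/(2k)$ (the analog of \cref{obs:ftog}) gives the claimed bound, with the extra factor of $\tfrac12$ in $\tfrac{1}{4k}$ absorbing any slack from darts that fall into several forbidden categories simultaneously and from the mild overcount in the $d^2$ bound.

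The main obstacle is harmonizing the simplicity exclusions with the reserved-dart strategy: as $v$ changes from step to step, the vertex-sets we forbid for simplicity interact in subtle ways with $d'$ and $d''$, and one must verify that $d'$ is never blocked by simplicity prior to step $k$, and that the closing edge $\{d_{2k-1},d'\}$ is not itself a repeat edge of the face. This should be resolved by the same case split as in \cref{lem:lbound}, separating according to whether $d''=\emptyset$ or has just been redefined, and using the fact that the vertex of $d'$ (namely the vertex of $d_1$) has had no incident edge placed in the partial face besides $\{d_1,d_2\}$, so the closing edge is automatically new.
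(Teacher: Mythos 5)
Your overall strategy — re-running the rooted-face construction of \cref{lem:lbound} while additionally forbidding any dart whose selection would immediately create a loop or multiple edge — is the same as the paper's, and the step-by-step count of at least $2m-d^2-2i$ valid choices for $d_{2i+2}$ is also what the paper does. However, there is a concrete gap in your treatment of the closing edge, and you misidentify where the extra factor of $\tfrac12$ in $\tfrac{1}{4k}$ actually comes from.

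You assert that ``the vertex of $d'$ (namely the vertex of $d_1$) has had no incident edge placed in the partial face besides $\{d_1,d_2\}$, so the closing edge is automatically new.'' This is false. The construction only reserves the two darts $d'=R^{-1}(d_1)$ and $d''$ at the root vertex $v$; nothing prevents some later choice $d_{2j}$ (or some forced $d_{2j+1}$) from landing at $v$ through one of the other darts of $v$, or at a vertex already adjacent to $v$. Hence, by the time you reach $d_{2k-1}$, the closing edge $\{d_{2k-1},d'\}$ may well be a loop at $v$ or a repeat of an edge already incident to $v$. To guarantee the closing edge is simple, the paper imposes \emph{extra} restrictions on the penultimate choice $d_{2k-2}$: it forbids all darts at $v$ and all darts at vertices already adjacent to $v$, cutting the valid choices at that step down to roughly $2m-2d^2-2(k-2)\geq\tfrac12\bigl(2m-d^2-2(k-2)\bigr)$. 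That deliberate halving is precisely the source of the $\tfrac14$ in the statement; it is not ``slack absorbing overlaps of forbidden categories'' (overlaps would only help you, not hurt). Your proposal, as written, claims $g_k^s\geq 2m(2m-d^2)\cdots(2m-d^2-2k+4)$ with the last factor at full strength, but the argument does not support that: you need the reduced factor at step $k-2$, and then $h_k^s\geq g_k^s/(2k)$ recovers exactly $\tfrac{1}{4k}$ times the stated product. Once you insert the paper's extra constraints at the penultimate step, the proof closes, and the hypothesis $k\le m-d^2$ keeps even the reduced factor positive.
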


\begin{figure}
     \centering
     \begin{subfigure}[b]{\textwidth}
         \begin{minipage}{0.45\textwidth}
         \includegraphics[width=\textwidth]{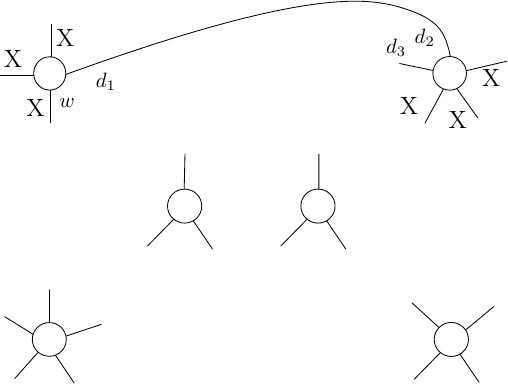}
       \end{minipage}\unskip\hspace{3em}
       \begin{minipage}{0.45\textwidth}
         \caption{
         Here there are $24$ choices for $d_1$.  There are only $20$ choices for $d_2$.  This is because all of the other choices at the starting vertex (denoted as $w$) are disallowed, since our graph is simple and cannot have multiple edges.  The choice of $d_3$ is then determined, and the disallowed choices for $d_4$ are marked with an $X$.}
         \label{fig:lb1simple}
       \end{minipage}
     \end{subfigure}
     \\\bigskip
     \begin{subfigure}[b]{\textwidth}
         \centering
         \begin{minipage}{0.45\textwidth}
         \includegraphics[width=\textwidth]{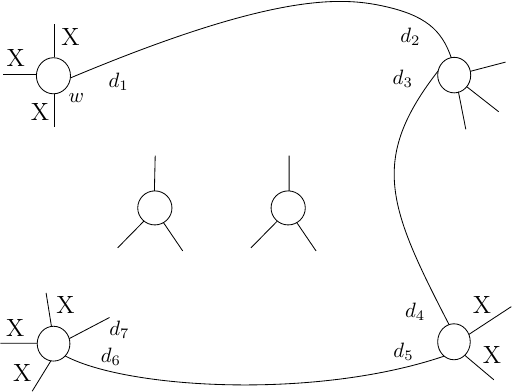}
       \end{minipage}\unskip\hspace{3em}
       \begin{minipage}{0.45\textwidth}
         \caption{In this example, there are only $10$ choices for $d_8$.  The other three darts at the same vertex as $d_7$ aren't allowed as the graph cannot have loops.  The darts at the bottom right vertex are also dissallowed, as we cannot have multiple edges.  Finally, all darts around vertex $w$ are dissallowed as well.}
         \label{fig:lb2simple}
       \end{minipage}
     \end{subfigure}
    \caption{An illustration of the proof of Lemma \ref{lem:gklowersimple}.}
\end{figure}

We follow a very similar proof as in \cref{lem:lbound}, see Figures \ref{fig:lb1simple} and \ref{fig:lb2simple} for an example of the process.  The difference is that at each step when picking $d_{2i}$ we also disallow any choices which add a parallel edge or loop.

\begin{proof}[Proof of \cref{lem:gklowersimple}]
We will count rooted simple faces.  Then, using \cref{obs:ftog:simple} we will obtain $h_k^s \geq g_k^s/2k$.
      
There are $2m$ choices for $d_1$.  Since we are not allowing any loops in the face, we cannot choose any other darts at vertex $w$ incident with $d_1$.
This means that we have at least $2m-d > 2m-d^2$ total choices for $d_2$.  
As before, $R$ being fixed means $d_3$ is determined.

Now suppose that the sequence of darts is currently $d_1,d_2, \dots, d_{2i}$.
As before, we have to put $d_{2i+1} := \Uforw(d_{2i})$. 
There are several different choices of $d_{2i+2}$ which we disallow:
\begin{itemize}
    \item Any of the choices $d_1,d_2,\dots,d_{2i+1}$.
    \item Any dart incident with vertex $w$.
    \item Any choice which adds a loop or multiple edge to the face.
    \item Any choice which adds a 1-open face.
\end{itemize}
We upper bound the total number of disallowed choices.
Suppose $d_{2i+1}$ is at vertex $v$, then there are at most $d-2$ unpaired darts present at $v$.  Pairing into any unpaired dart at $v$ will create a loop. 
Pairing into any dart at a vertex $u$ for which there is already an edge between~$v$ and~$u$ will add a multiple edge.
There are at most $d-1$ possible edges incident with $v$ and for each adjacent $u$ at most $d-2$ available darts
(that is darts not forbidden by the other rules). 
Indeed, if $u=w$ then no darts are allowed (and counted separately) and if $u$ is any other vertex already adjacent to $v$ then at least two darts incident with $u$ (darts $d_j,d_{j'}$ for some $j \ne j'\le 2i$) are not available, 
that is already counted as forbidden.
Additionally, we disallow $d-1$ darts at $w$.
Therefore, there are at most $d^2-2d$ choices that add a loop or multiple edge. 
There is at most one choice which adds a 1-open face, by the same reasoning as in the proof of Lemma \ref{lem:lbound}.
In total, we have at most $(2i+1) +d^2-2d + d - 1 + 1 \leq d^2 + 2i - (d-1)$ disallowed choices for $d_{2i+2}$. %
Therefore, we have at least $2m-d^2-2i$ choices for $d_{2i+2}$.  %
    
     We need a little extra analysis for the final step, as we must ensure that when completing the face we do not add a loop or a multiple edge.  After our facial walk passes through $k-2$ distinct edges,
     we have the sequence $d_1,d_2,\dots,d_{2k-3}$ and must make a choice for $d_{2k-2}$.  At this step, we disallow all the $d^2 + 2(k-2)-(d-1)$ choices as in the previous case. 
      When choosing $d_{2k-2}$ we disallow 
      any darts incident with the only vertex incident with $w$.
      This is at most $d-2$ darts.
      Therefore, we have at least 
      \[
      2m-(d^2+2(k-2)-(d-1))-(d-2) > 2m-d^2-2(k-2)>0
    \]
      choices at this step.

      Now at the final step, we choose $d_{2k} = R^{-1}(d_1)$.  Our disallowed choices mean that this choice is always possible and that the edge $(d_{2k-1}, d_{2k})$ will not add a loop or multiple edge. 
\end{proof}

\begin{proof}[Proof of \cref{thm:randomgraphsmallp}]
  Select a uniformly random $M \in \mathcal{M}^s_{\mathbf d}$. For each $f \in \Phi^s$, 
  let $X_f$ denote the indicator random variable for the event ``$f$ appears in $M$''.
  Using \cref{cor:bollobas} and \cref{eq:numsimple} we get
\begin{align}
  \E[F_{\mathbf d}^s] &= \sum_{f \in \Phi^s} \E[X_{f}] \label{eq:xf}\\
    &\leq \sum_{k=3}^m  h_k^s \frac{|C_{2^{m-k}}|}{|G^s(n,\mathbf{d})|} \nonumber\\
    &= \frac{1}{(1+o(1)) e^{-\lambda_{\mathbf d}-\lambda_{\mathbf d}^2}}
       \sum_{k=3}^m  h_k^s \frac{|C_{2^{m-k}}|}{|G(n,\mathbf{d})|}. \nonumber
\end{align}

Using the trivial bound $h_k^s \leq h_k$, \cref{eq:framework}, and \cref{thm:multi:main} we obtain the upper bound.
Recall that as $d$ is a constant we have that $\lambda_{\mathbf d}$ is also a constant which yields the resulting bound.
\begin{align*}
    \E[F_{\mathbf d}^s] &\leq \frac{1}{(1+o(1))e^{-\lambda_{\mathbf d}-\lambda_{\mathbf d}^2}} \sum_{k=3}^m  h_k \frac{|C_{2^{m-k}}|}{|C_{2^m}|}  \\
    &= \frac{1}{(1+o(1))e^{-\lambda_{\mathbf d}-\lambda_{\mathbf d}^2}} \E[F_{\mathbf d}] = O(\log(n)).
\end{align*}
   
For the lower bound, recall \cref{eq:c2mk} from \cref{lem:hk} that 
   \[\frac{|C_{2^{m-k}}|}{|G(n,\mathbf{d})|}=  \frac{|C_{2^{m-k}}|}{|C_{2^{m}}|} = \frac{1}{(2m-1)(2m-3)(2m-5)\dots(2m-2k+1)}.\]

   Combining this with Lemma \ref{lem:gklowersimple} we obtain the following for $2 \le k \le m/2$. 
    \begin{align*}
        \frac{2k \, h_k^s|C_{2^{m-k}}|}{|C_{2^m}|} 
          &\geq \frac{2m(2m-d^2)(2m-d^2-2)\dots(2m-d^2 - 2k + 4)}{(2m-1)(2m-3)\dots(2m-2k+1)} \\
          &= \frac{2m}{2m-1} \prod_{i = 0}^{k-2} \Bigl(1 - \frac{d^2-3}{2m-2i-3} \Bigr)  \\
          &\ge \Bigl(1 - \frac{d^2}{m}\Bigr)^k \ge e^{-2\frac{d^2}{m}k} \ge e^{-d^2}
    \end{align*}
    On the last line we used that $1-x \ge e^{-2x}$ for $x \in [0,1/2]$. 
    
    Putting this together with \cref{cor:bollobas} (recall that in what follows $o(1)$ depends on $\varepsilon$ and $d$) and \cref{eq:numsimple} as in \cref{eq:xf} we get the required result: 
    \begin{align*}
  \E[F_{\mathbf d}^s] &= \sum_f \E[X_f] \geq 
    \sum_{k=3}^{\floor{m/2}}  h_k^s \frac{(1 + o(1))e^{-\binom{d}{2}-\binom{d}{2}^2-\frac{d^2}{2}}|C_{2^{m-k}}|}
                                 {|G^s(n,\mathbf{d})|} \\
       &\ge \frac{(1 + o(1))e^{-\binom{d}{2}-\binom{d}{2}^2-\frac{d^2}{2}}}
                 {(1+o(1))e^{-\lambda_{\mathbf d}-\lambda_{\mathbf d}^2}} 
            \sum_{k=3}^{\floor{m/2}} \frac{h_k^s |C_{2^{m-k}}|}{|G(n,{\mathbf d})|} \\ 
       &\geq \frac{(1 + o(1))e^{-\binom{d}{2}-\binom{d}{2}^2-\frac{d^2}{2}}}
                  {2(1+o(1))e^{-\lambda_{\mathbf d}-\lambda_{\mathbf d}^2}}
            e^{-d^2} \left(H_{\floor{m/2}}-1\right) 
            = \Omega(\log(n)). %
    \end{align*}
    As before, the last equality follows as $d$ and hence $\lambda_{\mathbf d}$ are constants.
\end{proof}

\section{Open Problems}\label{sec:open}

We showed in \cref{cor:almostallpolylog} that almost all dense graphs (when setting $p\ge \tfrac{1}{\text{polylog }n}$) have a polylogarithmic average number of faces.  Then, in \cref{sec:randomBnd} we showed that random sparse graphs have a logarithmic average number of faces. The same Markov's inequality argument as in \cref{cor:almostallpolylog} gives that most sparse graphs have a logarithmic average number of faces. This leads us to the conjecture that this property holds for almost all graphs, without any density condition on the edges.

\begin{Conjecture}
    For any $p(n): \mathbb{N} \rightarrow [0,1]$, almost all graphs in $G(n,p)$ satisfy $\E[F] = O(\log(n))$.
\end{Conjecture}

This conjecture would follow from the stronger statement of Conjecture \ref{con:random}.  Conjecture \ref{con:random} can also be stated in terms of the closely related model of random graphs with $n$ vertices and $M$ edges.

\begin{Conjecture}
  The expected number of faces in a random embedding of a random graph ${G\in G(n,M)}$ is \[(1+ o(1))\ln(M).\]
\end{Conjecture}

A main result of the paper was that the complete graph does have a logarithmic number of expected faces.  A large family of examples of graphs on $n$ vertices with $\E[F] = \Theta(n)$ are given in \cite{CHMMS22}.  However all of these examples have maximum degree $O(1)$ with respect to the number of vertices.  We were unable to find any examples of dense graphs with such a large number of average faces, which leads us to the next conjecture.

\begin{Conjecture}
    Let $G$ be a graph on $n$ vertices with minimum vertex degree $\Omega(n)$.  Then $G$ satisfies $\E[F] = \Theta(\log(n))$.
\end{Conjecture}

Theorem \ref{thm:logBound} confirms this conjecture for the complete graph.  The multiplicative constant in our bound is not optimal, we restate the conjecture given in the introduction which suggests a possible optimal constant.

\begin{Conjecture}[{\cite[page~289]{Mauk1996}}]
    The expected number of faces in a random embedding of the complete graph $K_n$ is $2 \ln n+O(1)$.
\end{Conjecture}

Another natural line of enquiry would be to extend these results to non-orientable surfaces.  One natural way to define a random embedding of a graph on a non-orientable surface is to randomly choose a rotation system, and randomly choose a signature for all the edges in the graph, with probability $1/2$ of being either sign.  From data, we expect a similar result to hold for non-orientable random embeddings of $K_n$ under this definition.

\begin{Conjecture}
    The expected number of faces in a non-orientable random embedding of the complete graph $K_n$ is at most $\ln(n) + O(1)$.
\end{Conjecture}

 We think that in general, a similar property should hold for random embeddings of all graphs.

\begin{Conjecture}
    Let $F^-$ be the random variable for the average number of faces in a non-orientable random embedding of some graph $G$.  Then $\E[F^-] \leq \E[F]$.
\end{Conjecture}

It is an easy exercise to check this conjecture's validity on some toy models.  In particular, the chain of triangles joined by cut edges considered in \cite{loth2022expected} satisfies this property.  Also, an analysis of \hyperref[def:rpA]{Random Process A} gives the upper bound of $\E[F^-] \leq \tfrac{1}{2}\E[F] + 1$ for the dipole, which is the graph with $2$ vertices joined by $m$ edges.  Computer data ran on some more general graphs gives evidence for some small values of $n$.

Lastly, it would be of interest to understand higher moments of $F$.
This is widely open even for a complete graph.
In this paper, we only obtain an upper bound (with respect to $k$) for the second moment of the number of potential faces on $n-k$ vertices in $K_n$\lv{; recall \Cref{lem:power_of_expectation} for details}. 

\bibliographystyle{plainurl}
\bibliography{bibliography}

\lv{
\appendix
\section{Appendix}\label{sec:A}
\appendixText
}

\end{document}